\newcommand{\cercle}{\circle{0.1}}
\newcommand{\segment}{\line(1,0){0.9}}
\newcommand{\segmentvertical}{\line(0,1){0.9}}
\newtheorem{theorem}{Theorem}[section]
\newtheorem{lemma}{Lemma}[section]
\newtheorem{proposition}{Proposition}[section]
\newtheorem{corollary}{Corollary}[section]
\newcounter{exercicecnt}
\newenvironment{exercice}
{\refstepcounter{exercicecnt}\par\noindent \theexercicecnt. }% begin code
{}% end code
\title{Linearly recursive sequences and Dynkin diagrams}
\author{Christophe Reutenauer}
\address{Christophe Reutenauer:
D\'epartement de math\'ematiques, Universit\'e du Qu\'ebec \`a Montr\'eal}
\email{reutenauer.christophe@uqam.ca}
\date{\today}
\begin{document}  
 
\maketitle
\begin{abstract}
Motivated by a construction in the theory of cluster algebras (Fomin and Zelevinsky), one associates to each acyclic directed graph a family of sequences of natural integers, one for each vertex; this construction is called a {\em frieze}; these sequences are given by nonlinear recursions (with division), and the fact that they are integers is a consequence of the Laurent phenomenon of Fomin and Zelevinsky. If the sequences satisfy a linear recursion with constant coefficients, then the graph must be a Dynkin diagram or an extended Dynkin diagram, with an acyclic orientation. The converse also holds: the sequences of the frieze associated to an oriented Dynkin or Euclidean diagram satisfy linear recursions, and are even $\mathbb N$-rational. One uses in the proof objects called $SL_2$-{\em tilings of the plane}, which are fillings of the discrete plane such that each adjacent 2 by 2 minor is equal to 1. These objects, which have applications in the theory of cluster algebras, are interesting for themselves. 
Some problems, conjectures and exercises are given.

\end{abstract}

\section{Introduction}

The first scope of the present chapter is to prove a theorem which characterizes a class of linear recursive sequences associated to certain {\em quivers} (directed graphs): in general these sequences satisfy nonlinear recursions; it turns out that these sequences satisfy also a linear recursion exactly when the undirected underlying graph is of Dynkin, or extended Dynkin, type.

The secondary scope of the chapter is to introduce the reader to the notion of $SL_2$-tilings and their applications. An $SL_2$-tiling of the plane is a filling of the discrete plane by numbers, or elements of a commutative ring, in such a way that each adjacent 2 by 2 minor is equal to 1. 

Similar objects were considered by Coxeter \cite{coxeter}, Coxeter and Conway \cite{conway}, Di Francesco \cite{D}.

A remarkable subclass is obtained by prescribing the value 1 in a two-sided infinite discrete path of the discrete plane; then it extends uniquely to a $SL_2$-tiling, that may be easily computed by a well-known matrix representation. If the path is periodic, then the sequences on any discrete half-line satisfy linear recursions, and are even $\mathbb N$-rational. This allows to prove that the sequences associated to a frieze of type $\tilde{\mathbb A}$ are $\mathbb N$-rational.

The author is grateful to Pierre Auger, Val\'erie Berth\'e and Bernhard Keller for useful mail exchanges.

\section{$SL_2$-tilings of the plane}\label{plane}

Following \cite{ARS}, we call {\em $SL_2$-tiling of the plane} a mapping $t:\mathbb Z^2\mapsto K$, for some field $K$, such that for any $x,y$ in $\mathbb Z$, $$\left | \begin{array}{ll} t(x,y) & t(x+1,y)\\ t(x,y+1) & t(x+1,y+1) \end{array} \right | = 1.$$ Here we represent the discrete plane $\mathbb Z^2$, so that the $y$-axis points downwards, and the $x$-axis points to the right: see Figure \ref{coord}.

\begin{figure}[ht]\setlength{\unitlength}{4mm}
\begin{picture}
(12,12)(7,-9)
\put(14,-4){$P = (x,y)$}
\put(8,-3.5){${y}$}
 \put(13.5,2.5){${x}$} 
 \put(9,2){{\vector(0,-1){10}}}
 \put(9,2){{\vector(1,0){10}}}
 \end{picture}
\caption{Coordinate convention}\label{coord}
\end{figure}
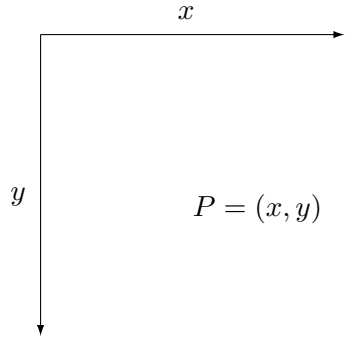

An example with $K=\mathbb Q$ is given in Figure \ref{tilingN}; it has values in $\mathbb N$. Another example, with $K$ the field of fractions over $\mathbb Q$ in the variables $a,b,c,d,e,f,\ldots$, ids given in Figure \ref{tilingabcdef}.

\begin{figure}
$$
\begin{array}{lllllllllllllll}
&&&&&&&&&& 1& 1& 1& 1& \\
&&&&&&&&&\bf 1& 1&2&3&4& \\
&&&&&&&&&\bf 1&2&5&8&11& \\
&&&...&&&&&2&\bf 1&3&8&13&18& \\
&&&&&&&& 1& 1&2^2&11&18&25& ...\\
&&&&&&&3& 1&2&9&5^2&41&57& \\
&&&&&&3&2&1&3&14&39&8^2&89& \\
& & & 1& 1& 1& 1& 1& 1&4&19&53&87&11^2& \\
 1& 1& 1& 1&2&3&4&5&6&5^2&119&332&545&758& \\
 1&2&3&4&9&14&19&24&29&121&24^2&1607&2368&3669& \\
&&&&&&&&&...&&&&& \\
\end{array}
$$
\caption{An $SL_2$-tiling over $\mathbb N$}
\label{tilingN}
\end{figure}

\begin{figure}
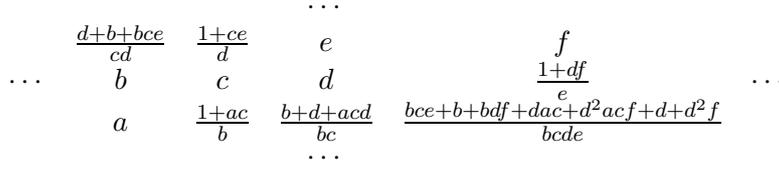

$$
\begin{array}{ccccccccc}
&&&\cdots&&\\
&\frac{d+b+bce}{cd}&\frac{1+ce}{d} & e& f& \\
\cdots&b&c&d& \frac{1+df}{e}&\cdots\\
&a&\frac{1+ac}{b} & \frac{b+d+acd}{bc}& \frac{bce+b+bdf+dac+d^2acf+d+d^2f}{bcde} \\
&&&\cdots&&
\end{array}
$$
\caption{An $SL_2$-tiling over ${\mathbb N}[a,b,c,\ldots,a^{-1},b^{-1},c^{-1},\ldots]$}
\label{tilingabcdef}
\end{figure}

These objects are an extension to the whole plane of the {\em frieze patterns} introduced by Coxeter \cite{coxeter} and studied by Conway and Coxeter \cite{conway}. Below is such a frieze-pattern, partially represented (it extends diagonally infinitely in both directions north-west and south-east). Note that Conway and Coxeter represent them horizontally, instead of diagonally as here.

$$
\begin{array}{cccccccccccccccc}
&&&&&1&\it 1 \\
\ldots&&&&&1&2&\it 1 \\
&&&&&1&3&2&\it 1 \\
&1&1&1&1&1&4&3&2&\it 1 \\
&1&2&3&4&5&21&16&11&6&\it 1 \\
&1&3&5&7&9&38&29&20&11&2&\it 1 \\
&\it 1&4&7&10&13&55&42&29&16&3&2&\it 1 \\
&&\it 1&2&3&4&17&13&9&5&1&1&1 \\
&&&\it 1&2&3&13&10&7&4&1 \\
&&&&\it 1&2&9&7&5&3&1&&&\ldots \\
&&&&&\it 1&5&4&3&2&1 \\
&&&&&&\it 1&1&1&1&1 
\end{array}
$$

An $SL_2$-tiling of the plane, viewed as an infinite matrix, has necessarily rank at least 2. Following \cite{BeR}, we say that the tiling is {\it tame} if its rank is 2. 

\section{$SL_2$-tiling associated to a bi-infinite discrete path}

Roughly speaking, a frontier is a discrete path (with steps that go from west to east, or south to north), which is infinite in both directions, and such that each vertex is labelled with a nonzero element of the ground field; see Figure \ref{frontier}. In most cases considered here, these elements will be all equal to 1, so that the frontier is simply a bi-infinite discrete path. 

The formal definition goes as follows. A {\em frontier} a bi-infinite sequence 
\begin{equation} \label{frontiervariable}
\ldots x_{-2}a_{-2}x_{-1}a_{-1}x_0a_0x_1a_1x_2a_2x_3a_3 \ldots
\end{equation}
where $x_i\in \{x,y\}$ and $a_i$ are elements of $K^*$, for any $i\in \mathbb Z$. It is called {\em admissible} if none of the two sequences $(x_n)_{n\geq0}$ and $(x_n)_{n\leq0}$ is ultimately constant. The $a_i$'s are called the {\em variables} of the frontier. Each frontier may be embedded into the plane: the variables label integer points in the plane, and the $x$ (resp. $y$) determine a bi-infinite discrete path, in such a way that $x$ (resp. $y$) corresponds to a segment of the form $[(a,b),(a+1,b)]$ (resp $[(a,b),(a,b-1)]$). Note the $b-1$: this is because of our coordinate conventions, see Figure \ref{coord} \footnote{The reader may consider that these conventions are not natural; but each choice has advantages and inconveniences.}.

An example is given in Figure \ref{tilingN}, with all the variables equal to 1, and where the embedding is represented by the 1's. In Figure \ref{tilingabcdef}, the frontier is coded by the word $...aybxcxdyexf...$.

Given an admissible frontier, embedded in the plane as explained previously, let $(u,v)\in \mathbb Z^2$. Then we obtain a finite word, which is a factor of the frontier, by projecting the point $(u,v)$ horizontally and vertically onto the frontier. We call this word the {\em word} of $(u,v)$. It is illustrated in Figure \ref{tilingabcdef}: the word associated to the point labelled $\frac{b+d+acd}{bc}$ is $aybxcxd$. Another example is shown in Figure \ref{frontier}, where the word associated to $P$ is $a_{-3}ya_{-2}ya_{-1}ya_{0}xa_1xa_2ya_3xa_4$.

\begin{figure}
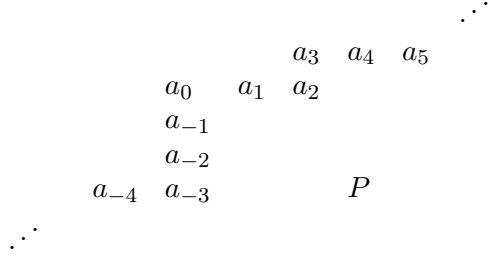

$$
 \begin{array}{lllllllllllllllll} 
&&&&&&&&&&\iddots    \\
&&&&&&&a_3&a_4&a_5 \\
&&&&&a_0&a_1&a_2  &\\
&&&&&a_{-1}  &&&\\
&&&&&a_{-2}  &&&\\
&&&&a_{-4}&a_{-3} &&&P\\
&&\iddots \end{array}
$$
\caption{A frontier}\label{frontier}
\end{figure}

\begin{theorem}\label{tiling-variables}
Given an admissible frontier, there exists a unique tame $SL_2$-tiling  $t$ of the plane over $K$, extending the embedding of the frontier into the plane. It is defined, for any point $(u,v)$ below the frontier, with associated word $a_0x_1a_1x_2...x_{n+1}a_{n+1}$, where $n\geq 1$ and $x_i\in\{x,y\}$, by the formula
\begin{equation}\label{tilingformulvariable}
t(u,v)=\frac{1}{a_1a_2...a_n} (1,a_0)\mu(a_1,x_2,a_2)\cdots \mu(a_{n-1},x_{n},a_{n})(1,a_{n+1})^t.
\end{equation}
\end{theorem}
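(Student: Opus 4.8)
The plan is to treat existence and the formula together, and then uniqueness separately. For existence I would \emph{define} $t$ by \eqref{tilingformulvariable} on the points strictly below the frontier, by the prescribed values $a_i$ on the frontier itself, and by the mirror-image formula on the points strictly above; then I would verify the three required properties: that every adjacent $2\times 2$ minor equals $1$, that $t$ is tame, and that $t$ restricts to the given embedding. The combinatorial heart is a dictionary between the geometry of a $2\times2$ block and the algebra of the matrix product. Fix a point $(u,v)$ below the frontier with word $a_0x_1a_1\cdots x_{n+1}a_{n+1}$ and core matrix $M=\mu(a_1,x_2,a_2)\cdots\mu(a_{n-1},x_n,a_n)$. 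Projecting the four corners $(u,v),(u+1,v),(u,v+1),(u+1,v+1)$ onto the frontier, I expect the words to be obtained from that of $(u,v)$ by appending one step--variable pair $x_{n+2}a_{n+2}$ on the \emph{right} end when $u$ increases, and one step--variable pair on the \emph{left} end when $v$ increases. Thus the four values share the common central matrix $M$ and differ only by a row factor $L_1=(1,a_0)$ or $L_2=(1,a_{-1})\mu(a_0,x_1,a_1)$ on the left, by a column factor $R_1=(1,a_{n+1})^t$ or $R_2=\mu(a_n,x_{n+1},a_{n+1})(1,a_{n+2})^t$ on the right, and by the appropriate power of the flanking variables in the normalization.

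With this bookkeeping the $2\times2$ minor factors by Cauchy--Binet: writing each corner value as $L_iMR_j$ up to scalars, one gets $(L_1MR_1)(L_2MR_2)-(L_1MR_2)(L_2MR_1)=\det\binom{L_1}{L_2}\,\det(M)\,\det(R_1,R_2)$, so the whole computation collapses to three small determinants. The identity to be checked is that $\det\mu(a_{i-1},x_i,a_i)$ equals the product $a_{i-1}a_i$ of the two flanking variables (so that $\det M=a_1a_n\prod_{2\le j\le n-1}a_j^2$), together with the two boundary evaluations $\det\binom{L_1}{L_2}=a_0a_1$ and $\det(R_1,R_2)=a_na_{n+1}$; these three facts make the normalization $1/(a_1\cdots a_n)$ cancel exactly and leave the minor equal to $1$. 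The small cases, in particular $n=1$ where the formula reads $t=(1+a_0a_2)/a_1$, have to be matched by hand with the $SL_2$ relation expressing an inner corner value as $(1+\text{product of the two frontier neighbours})/(\text{corner entry})$, and the blocks that straddle the frontier need the same determinant identities in their boundary form.

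Tameness is then almost free: because every value is $L\,M\,R$ with $M$ a product of $2\times2$ matrices, the restriction of $t$ to any finite rectangle factors through $K^2$, so all $3\times3$ minors vanish and the rank is $\le2$; combined with the rank $\ge2$ already noted for any $SL_2$-tiling, the rank is exactly $2$. For uniqueness I would argue that a tame tiling extending the frontier is forced value by value: moving into the region below the staircase, each new entry sits at the free corner of a $2\times2$ block whose other three entries are already known, and the $SL_2$ equation determines it uniquely as long as the opposite pivot entry is nonzero---nonvanishing being guaranteed by the explicit formula and the admissibility hypothesis. I expect the main obstacle to be exactly this bookkeeping layer: pinning down, for each local shape of the frontier ($xx$, $xy$, $yx$, $yy$) and for blocks crossing the frontier, which end of the word grows and what the boundary row and column determinants evaluate to, and checking that the pivots never vanish so that the propagation in the uniqueness argument is legitimate. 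Once the word/matrix dictionary and these nonvanishing facts are in place, the determinant identity and Cauchy--Binet do the rest mechanically.
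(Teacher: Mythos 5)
Your existence argument is, at its core, the same as the paper's: write the four corner values with a common central matrix $M$ and peel off row/column perturbations, so that the adjacent $2\times2$ minor becomes $\det\binom{\lambda}{\lambda'}\,\det(M)\,\det(\gamma,\gamma')$ (the paper obtains this by writing the $2\times2$ matrix of values as the product $\binom{\lambda}{\lambda'}M\,(\gamma,\gamma')$ and using multiplicativity of the determinant; no appeal to Cauchy--Binet is needed). But your bookkeeping premise is wrong: increasing $u$ by one does \emph{not} append a single step--variable pair; it appends a whole block $y^{l}x$ carrying $l+1$ new variables, and increasing $v$ prepends a block $yx^{k}$ --- see Figure \ref{words}, where the four words are $w$, $wy^{l}x$, $yx^{k}w$, $yx^{k}wy^{l}x$ with arbitrary $k,l\geq 0$. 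So the relevant case analysis is parametrized by the block lengths $k,l$, not by the four two-letter shapes $xx,xy,yx,yy$, and your boundary determinants are only the $k=l=0$ instances; even there the signs are off: since the word of a point below the frontier necessarily begins with $y$ and ends with $x$, one computes $\det\binom{L_1}{L_2}=-a_0a_1$ and $\det(R_1,R_2)=-a_na_{n+1}$ (the two signs cancel, so your normalization count survives). These defects are repairable --- the general-$k$ determinant is the analogue of the paper's computation $\lambda'=(0,1)\mu(y)\mu(x)^{k}=(1,k+1)$ --- but as stated the verification is incomplete. Note also that you are attempting the general-variables statement directly, whereas the paper proves in full only the case where all variables equal $1$ and cites \cite{ARS} and \cite{BeR} for the rest.

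The genuine gap is uniqueness. Your corner-by-corner propagation requires each pivot entry to be nonzero, and your claim that nonvanishing is ``guaranteed by the explicit formula and the admissibility hypothesis'' is false over a general field $K$: already at depth one the formula gives $t=(1+a_0a_2)/a_1$, which vanishes whenever $a_0a_2=-1$, and $SL_2$-tilings genuinely contain zeros --- the paper's Corollary \ref{particular4sequences} exhibits tilings with an entire diagonal of $0$'s (admissibility constrains the shape of the path, not the values of $t$). This is exactly why the theorem asserts uniqueness only among \emph{tame} tilings, why the paper's self-contained proof restricts to the all-ones case (there $t(u,v)=\mu(w)_{22}>0$, so the induction on word length is legitimate and tameness is not even needed for uniqueness), and why the general uniqueness and tameness are delegated to \cite{BeR}, Propositions 7 and 12. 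Finally, your tameness sketch --- values factor as $\mathrm{row}(v)\cdot\mathrm{col}(u)$ through $K^{2}$ --- is a sensible alternative to the paper's later route via linearization coefficients (Corollary \ref{lincoeff}), but as written it covers only rectangles lying entirely below (or, by the mirror formula, above) the frontier; a single rank-$2$ factorization valid across the frontier still has to be exhibited before you may conclude that the full infinite matrix has rank $2$.
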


Here we have used the following notation:
$$\mu(a,x,b)=\left(\begin{array}{cc}a&1\\0&b\end{array}\right), \,\, \,\mu(a,y,b)=\left(\begin{array}{cc}b&0\\1&a\end{array}\right).$$

\section{Proof of Theorem \ref{tiling-variables}}
We prove the theorem in the particular case where all variables on the frontier are equal to 1; we do not use more than this special case in the sequel of this article. The general case is proved in \cite{ARS}, Theorem 4, and tameness and uniqueness follows from \cite{BeR}, Proposition 7 and Proposition 12. 

In the special case we prove, one may omit the variables on the frontier, since they are all equal to 1; hence a frontier is simply a bi-infinite word on the alphabet $\{x,y\}$. Likewise, the word associated to each point is a finite word on this alphabet.

In this case, the theorem takes the following simpler form: let $\mu$ denote the homomorphism from the free monoid $\{x,y\}^*$ into the multiplicative group $SL_2(\mathbf Z)$ such that 
$$\mu(x)=\left(\begin{array}{cc}1&1\\0&1\end{array}\right) \quad \mbox{and}  \quad \mu(y)=\left(\begin{array}{cc}1&0\\1&1\end{array}\right).$$
Moreover, denote by $S(A)$ the sum of the coefficients of any matrix $A$. 
Then we to prove that: given an admissible frontier, with only 1's as variables, there exists a unique $SL_2$-tiling of the plane $t$ extending the embedding of the frontier into the plane. It is defined, for any point $(u,v)$ below the frontier, with associated word $ywx$, by the formula
$$t(u,v)=S(\mu(w)).
$$

Note that equivalently, if $(u,v)$ is a point with associated word $w$, one has
\begin{equation}\label{formula}
t(u,v)=\mu(w)_{22}.
\end{equation}
Indeed, we have $w=yw'x$, so that $$S(\mu(w'))=(1,1)\mu(w')\left(\begin{array}{c}1\\1\end{array}\right)$$
$$=
(\left(\begin{array}{cc}1&0\\1&1\end{array}\right)\mu(w')\left(\begin{array}{cc}1&1\\0&1\end{array}\right))_{22}=(\mu(y)\mu(w')\mu(x))_{22}=\mu(w)_{22}.$$

We prove below that formula (\ref{formula}) defines an $SL_2$-tiling $t$. Then, clearly $t(u,v)> 0$ for any $(u,v)\in \mathbb Z^2$. Then it is easily deduced, by induction on the length of the word associated to $(u,v)$, that $t(u,v)$ is uniquely defined by the $SL_2$ condition. This proves that the tiling is unique; note that in this special case tameness is not used to prove uniqueness. 

Now, we show that the function $t$ given by Eq.(\ref{formula}) is an $SL_2$-tiling of the plane. It is enough to show that for any $(u,v)\in \mathbb Z^2$, the determinant of the matrix $\left ( \begin{array}{ll} t(u,v) & t(u+1,v)\\ t(u,v+1) & t(u+1,v+1) \end{array} \right )$ is equal to 1.

By inspection of Figure \ref{words}, where $k,l\ge 0$ and $w=x_1\cdots x_n$, $n\geq 0$ and $x_i\in\{x,y\}$,
it is seen that the words associated to the four points $(u,v)$, $(u+1,v)$, $(u,v+1)$ and $(u+1,v+1)$ are  respectively of the form $w$, $wy^{l}x$, $yx^{k}w$ and $yx^{k}wy^{l}x$.
\begin{figure}
$$
\begin{array}{cccccccccccc}
&&&&&&&&\bf 1&\bf 1 \\
&&&&&&&&\vdots&  \\
&&&&&&&&\vdots&  \\
&&&&&&&&\bf 1&  \\
&&&&&&&\iddots&&\\
&&&&&&w&&&&  \\
&&&&&\iddots&&&&&    \\
&&&&&&&&&&  \\
\bf 1&\ldots&\ldots&\bf 1&&&&&(u,v)& (u+1,v)\\
\bf 1&&&&&&&&(u,v+1)&(u+1,v+1)
\end{array}
$$
\caption{}\label{words}
\end{figure}

Let $M=\mu(w)$. 
Then $t(u,v)=M_{22}=(0,1)M\left(\begin{array}{cc}0\\1\end{array}\right)$, $t(u+1,v)=(0,1)M\mu(y)^l\mu(x)\left(\begin{array}{cc}0\\1\end{array}\right)$, $t(u,v+1)=(0,1)\mu(y)\mu(x)^kM\left(\begin{array}{cc}0\\1\end{array}\right)$ and moreover $t(u+1,v+1)=(0,1)\mu(y)\mu(x)^kM\mu(y)^l\mu(x)\left(\begin{array}{cc}0\\1\end{array}\right)$.

We have to show that $t(u,v)t(u+1,v+1)-t(u,v+1)t(u+1,v)=1$. Equivalently that the matrix 
$$
\left(\begin{array}{cc}\lambda M\gamma&\lambda M\gamma'\\\lambda' M\gamma&\lambda' M\gamma'\end{array}\right),
$$
where $\lambda=(0,1)$, $\lambda'=(0,1)\mu(yx^k)$ and similarly for $\gamma,\gamma'$, has determinant 1. Now, this matrix is equal to the product
$$
\left(\begin{array}{cc}\lambda\\\lambda'\end{array}\right)M(\gamma,\gamma').
$$
The matrix $M=\mu(w)$ has determinant 1. Moreover, $\lambda'=(0,1)\mu(y)\mu(x^k)=(0,1)\left(\begin{array}{cc}1&0\\1&1\end{array}\right)\mu(x)^k=(1,1)\left(\begin{array}{cc}1&k\\0&1\end{array}\right)=(1,k+1)$. Thus  $\left(\begin{array}{cc}\lambda\\\lambda'\end{array}\right)=\left(\begin{array}{cc}0&1\\1&k+1\end{array}\right)$, which has determinant $-1$; similarly $(\gamma,\gamma')$ has determinant is $-1$, which ends the proof, except for tameness, which will be proved in Subsection \ref{tameness}.

We have also proved

\begin{corollary}\label{t(P)=mu(w)}
Let $t$ be the $SL_2$-tiling associated to some frontier with variables all equal to 1. Then for each point $P$ with associated word $w$, one has
$t(P)=\mu(w)_{22}$.
\end{corollary}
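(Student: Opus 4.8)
The plan is to read the corollary as the precise content of formula (\ref{formula}) in the unit-variable case, so that its proof consists in assembling the two facts already obtained in the course of proving Theorem \ref{tiling-variables}: that $\mu(w)_{22}$ obeys the $SL_2$ condition, and that the tiling extending an admissible frontier is unique.

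First I would introduce the function $\tau$ defined on points below the frontier by $\tau(P) = \mu(w)_{22}$, where $w$ is the word of $P$. The decisive point, already verified above, is that $\tau$ is an $SL_2$-tiling: setting $M = \mu(w)$ and writing the four entries of an adjacent $2\times 2$ block as $\lambda M\gamma$, $\lambda M\gamma'$, $\lambda' M\gamma$, $\lambda' M\gamma'$ with row vectors $\lambda,\lambda'$ and column vectors $\gamma,\gamma'$ assembled from powers of $\mu(x)$ and $\mu(y)$, the block is the product $\binom{\lambda}{\lambda'} M (\gamma,\gamma')$ of three matrices of determinants $-1$, $1$, $-1$, so its determinant is $1$. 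I would also observe that $\tau$ extends the embedding of the frontier, since a point on the frontier has empty letter-word and $\mu(\varepsilon)_{22} = 1$, matching the prescribed value $1$.

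It then remains only to invoke uniqueness: by Theorem \ref{tiling-variables} there is exactly one $SL_2$-tiling extending the given frontier, and $\tau$ is such a tiling, so the tiling $t$ named in the corollary must coincide with $\tau$. This yields $t(P) = \mu(w)_{22}$, as claimed. The step requiring the most attention is not the determinant identity, which is clean and essentially done, but the conventions: one must confirm that each point below the frontier genuinely has a well-defined word $w$, necessarily beginning with $y$ and ending with $x$, so that $\mu(w)_{22}$ is unambiguous and the uniqueness statement applies without any gap.
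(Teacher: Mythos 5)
Your proposal is correct and follows essentially the same route as the paper, which derives the corollary as an immediate byproduct of the proof of Theorem \ref{tiling-variables}: the determinant factorization $\left(\begin{smallmatrix}\lambda\\\lambda'\end{smallmatrix}\right)M(\gamma,\gamma')$ with determinants $-1,1,-1$ shows that $P\mapsto\mu(w)_{22}$ is an $SL_2$-tiling extending the frontier, and uniqueness (proved there by induction on word length, using positivity) identifies it with $t$. Your extra care about the word $w=yw'x$ and the empty word on the frontier matches the paper's conventions, so nothing is missing.
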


\section{$\mathbb N$-rational sequences}

\subsection{Equivalent definitions}

A series $S=\sum_{n\in \mathbf N} a_n x^n \in \mathbb N[[x]]$ is called {\em $\mathbb N$-rational} if it satisfies one of the two equivalent conditions (this equivalence is a particular case of the Kleene-Sch\"{u}tzenberger theorem, see \cite{BR}, Theorem I.7.1):

(i) for some matrices $\lambda \in \mathbb N^{1\times d}$, $M\in \mathbb N^{d\times d}$, $\gamma \in \mathbb N^{d\times 1}$, one has: $\forall n\in\mathbb N$, $a_n=\lambda M^n \gamma$. 

(ii) $S$ belongs to the smallest subsemiring of $\mathbb N[[x]]$ containing $\mathbb N[x]$ and closed under the operation $T\rightarrow T^*=\sum_{n\in \mathbf N} T^n$ (which is defined if $T$ has zero constant term);

We then say that the sequence $(a_n)$ is $\mathbb N$-{\em rational}. We do not prove this equivalence, since this is somewhat out of the scope of the present article. We shall use only form (i).
A third equivalence is the following: 

(iii) there is exists a rational, or equivalently, by Kleene's theorem, recognizable language $L$, such that
$a_n$ is for each $n$ the number of words of length $n$ in $L$ (see \cite{BR} Proposition 3.2.1).

Or equivalently:

(iv) there exist a finite directed graph $G$, a distinguished vertex $v_0$ and a subset $V_f$ of the set of vertices of $G$ such that for any $n$, $a_n$ is the number of paths in $G$ which go from $v_0$ to some vertex in $V_f$.

A simple consequence of (i) is that each $\mathbb N$-rational sequence $(a_n)$ satisfies a linear recursion with integer coefficients. Indeed, let $t^d-\alpha_1 t^{d-1}-\cdots -\alpha_d$ be the characteristic polynomial of $M$. Then by the Cayley-Hamilton theorem, we have $M^d=\alpha_1 M^{d-1}+\cdots +\alpha_d$; multiplying at the left by $\lambda M^n$ and at the right by $\gamma$, we therefore obtain that for any natural number $n$, $a_{n+d}=\alpha_1 a_{n+d-1}+\cdots +\alpha_d a_n$. Hence, $(a_n)$ satisfies the linear recursion associated to the characteristic polynomial of $M$.

Recall the well-known result that a sequence $(a_n)$ satisfies a linear recursion with constant coefficients if and only if the associated series $S=\sum_{n\in \mathbf N} a_n x^n$ is rational, that is, is the expansion of some rational function. We call such sequences {\em rational}.

We therefore deduce that each $\mathbb N$-rational sequence is rational and has coefficients in $\mathbb N$. The converse is however not true: see 4.3. 

It is generally believed that if a rational sequence is the counting sequence of some mathematical object, then it is actually $\mathbb N$-rational; this metamathematical principle, that goes back to Sch\"utzenberger, is illustrated by many examples, as Hilbert series of rings or monoids, and generating series of combinatorial structures; see the introduction of \cite{R} and \cite{BM} Section 2. An example within the present article are the friezes: if they are rational, they must be $\mathbb N$-rational. 

\subsection{Linear algebraic characterization and closure properties}

\begin{proposition}
The following conditions are equivalent:

(1) the sequence $(a_n)$ is $\mathbb N$-rational;

(2) there exists a finitely generated submodule of the $\mathbb N$-module of all sequences over $\mathbb N$ which contains $(a_n)$ and which is closed under the shift which maps each sequence $(b_n)$ onto the sequence $(b_{n+1})$;

(3) there exists a finitely generated $\mathbb N$-module ${\bf M}$, an element $V$ of ${\bf M}$, an $\mathbb N$-linear endomorphism $T$ of ${\bf M}$  and an $\mathbb N$-linear mapping $\phi:{\bf M}\rightarrow \mathbb N$ such that for any $n$, $a_n=\phi\circ T^n(V)$.
\end{proposition}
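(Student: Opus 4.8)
The plan is to prove the three equivalences in a cycle $(1)\Rightarrow(2)\Rightarrow(3)\Rightarrow(1)$, since each implication is short once the objects are set up correctly. The definitions $(i)$ and $(iii)$ of $\mathbb N$-rationality given earlier supply the starting matrix data, so I would work directly from form $(i)$: there exist $\lambda\in\mathbb N^{1\times d}$, $M\in\mathbb N^{d\times d}$, $\gamma\in\mathbb N^{d\times 1}$ with $a_n=\lambda M^n\gamma$ for all $n$.

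For $(1)\Rightarrow(2)$, I would take the submodule $\mathbf S$ of the $\mathbb N$-module of all $\mathbb N$-valued sequences generated by the $d$ sequences $(e_i^t M^n\gamma)_{n\ge 0}$, where $e_1,\dots,e_d$ is the standard basis, i.e.\ the sequences built from each coordinate of $M^n\gamma$. This is finitely generated over $\mathbb N$. The shift sends the generator coming from row $i$, namely $(e_i^t M^n\gamma)_n$, to $(e_i^t M^{n+1}\gamma)_n=(e_i^t M\,M^n\gamma)_n$; since $M$ has entries in $\mathbb N$, the vector $e_i^t M$ is an $\mathbb N$-combination of the $e_j^t$, so the shifted sequence is an $\mathbb N$-combination of the generators, proving closure under shift. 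Finally $(a_n)=(\lambda M^n\gamma)_n$ is the $\mathbb N$-combination of the generators with coefficients the entries of $\lambda$, so $(a_n)\in\mathbf S$. This is the crux: positivity of $M$ is exactly what keeps the shift inside the $\mathbb N$-module, and it is the only place where the argument differs from the classical $\mathbb Q$-rational case.

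For $(2)\Rightarrow(3)$, I would simply let $\mathbf M$ be the given finitely generated shift-closed module, let $T$ be the restriction of the shift to $\mathbf M$ (an $\mathbb N$-linear endomorphism by hypothesis), set $V=(a_n)_n\in\mathbf M$, and define $\phi:\mathbf M\to\mathbb N$ by $\phi((b_n)_n)=b_0$, evaluation of the zeroth term, which is $\mathbb N$-linear. Then $T^n(V)$ is the sequence $(a_{n+k})_k$, so $\phi(T^n(V))=a_n$, as required.

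For $(3)\Rightarrow(1)$, choose a finite generating set of $\mathbf M$ and use it to present $\mathbf M$ as a quotient of $\mathbb N^{d}$; concretely, pick generators $g_1,\dots,g_d$ so that each element is an $\mathbb N$-combination of them. Writing $T(g_j)$ and $V$ and the values $\phi(g_j)$ in terms of these generators yields an $\mathbb N$-matrix $M$ representing $T$, an $\mathbb N$-column $\gamma$ representing $V$, and an $\mathbb N$-row $\lambda$ representing $\phi$, so that $a_n=\phi(T^n(V))=\lambda M^n\gamma$, which is form $(i)$ and hence $(1)$. The one subtlety I expect here is that over the semiring $\mathbb N$ a finitely generated module need not be free, so the passage from an abstract $\mathbb N$-linear map to a nonnegative matrix must be done by tracking the action on a chosen generating family rather than by inverting a change of basis; since every element of $\mathbf M$ is by definition a nonnegative combination of the generators, the resulting matrices automatically have entries in $\mathbb N$, and this is exactly what we need.
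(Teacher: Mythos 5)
Your proposal is correct and takes essentially the same route as the paper's proof: the same generators $(e_iM^n\gamma)_n$ for $(1)\Rightarrow(2)$, the same shift-and-evaluation data for $(2)\Rightarrow(3)$, and the same matrix-on-generators computation for $(3)\Rightarrow(1)$, up to a transposed convention for $\lambda$ and $\gamma$. Your explicit remark that non-freeness of finitely generated $\mathbb N$-modules forces one to track a chosen generating family (with the induction $T^n(g_j)=\sum_k (M^n)_{kj}g_k$ for the chosen representative matrix) is exactly the point the paper handles implicitly.
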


Recall that a module ${\bf M}$ over a semiring (here $\mathbb N$) is a commutative monoid ${\bf M}$ with a left action of the semiring on ${\bf M}$, and with the same axioms as those for modules over rings, except that one has to add the axiom $0.V=0$, for any $V\in {\bf M}$ (see \cite{BR} 5.1 for some details).

\begin{proof}
(1) implies (2): take $(a_n)$ in the from (i) above. Then define $a^{(i)}_n=e_iM^n \gamma$, where $(e_i)$ is the canonical basis of $\mathbb N^{1\times d}$. Let $\bf M$ be the $\mathbb N$-submodule of the module of all sequences over $\mathbb N$ generated by the sequences $a^{(i)}_n$, $i=1,\ldots ,d$. Then $a_n=\lambda M^n \gamma=\sum_{1\leq i\leq d}\lambda_i e_iM^n\gamma=\sum_{1\leq i\leq d}\lambda_i a^{(i)}_n$. Thus $(a_n)$ is in $\bf M$. Moreover, a similar calculation shows that the shifted sequence of  $a^{(i)}_n$, which is $a^{(i)}_{n+1}=\lambda'M^n\gamma$, with $\lambda'=e_iM$, is also in $\bf M$. This proves that $\bf M$ is closed under the shift, since the latter is $\mathbb N$-linear.

(2) implies (3): we take as module $\bf M$ the submodule given in (2), as $V$ the sequence $(a_n)$ itself, as $T$ the shift and as $\phi$ the mapping which sends each sequence onto its term of rank 0. Then clearly $a_n=\phi\circ T^n(V)$.

(3) implies (1): let $V_j$ be the $d$ generators of $\bf M$; define $M$ by the formula $T(V_i)=\sum_{j} M_{ij}V_j$. Then we have for any natural number $n$, $T^n(V_i)=\sum_{j}M^n_{ij}V_j$. Indeed, this is true for 
$n=0,1$ and we admit it for $n$. Then 
$$T^{n+1}(V_i)=T(T^n(V_i))=T(\sum_{j}M^n_{ij}V_j)=\sum_{j}M^n_{ij}T(V_j)$$
$$=\sum_{j}M^n_{ij}\sum_{k} M_{jk}V_k=\sum_{k}(\sum_{j}M^n_{ij}M_{jk})V_k=\sum_{k}M^{n+1}_{ik}V_k.$$ Define $\lambda\in \mathbb N^{1\times d}$ by $V=\sum_i\lambda_i V_i$ and $\gamma\in \mathbb N^{d\times 1}$ by $\gamma_j=\phi(V_j)$.

Then 
$$a_n=\phi\circ T^n(V)=\phi(\sum_i\lambda_iT^n(V_i))=\phi(\sum_i\lambda_i\sum_jM^n_{ij}V_j)$$
$$=\sum_{ij}\lambda_iM^n_{ij}\phi(V_j)=\sum_{ij}\lambda_iM^n_{ij}\gamma_j=\lambda M^n \gamma.
$$ 
Thus (1) holds.
\end{proof}

\begin{corollary}\label{prop}
(i) If $(a_n)$ and $(b_n)$ are two sequences such that for some integer $k$, the sequences $(a_n)$ and $(b_n)$ are ultimately equal, then they are simultaneously $\mathbb N$-rational or not.

(ii) If for some natural number $p$, the $p$ sequences $(a^{(i)}_n)$, $i=0,\ldots,p-1$, are $\mathbb N$-rational, then so is the sequence $a_n$ defined by $a_{i+np}=a^{(i)}_n$, for any $n$ and $i$, $i=0,\ldots,p-1$.

(iii) If $(a_n)$ and $(b_n)$ are two $\mathbb N$-rational sequences, then so is the sequence $(a_nb_n)$.
\end{corollary}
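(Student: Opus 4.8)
My plan is to prove each of the three parts using the equivalent characterization (3) from the Proposition, since $\mathbb N$-rationality is most conveniently manipulated through the data of a finitely generated $\mathbb N$-module $\mathbf M$, an element $V$, an endomorphism $T$, and a linear form $\phi$ with $a_n = \phi\circ T^n(V)$. The overall strategy is that each closure property corresponds to a natural construction on such triples.

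For part (i), I would argue that modifying finitely many initial terms does not affect $\mathbb N$-rationality. The clean way is via the semiring form (ii) of the definition: if $(a_n)$ and $(b_n)$ agree for all $n\ge k$, then their generating series $A$ and $B$ differ by a Laurent-type polynomial correction only in low-order terms; more precisely, $A$ and $B$ have the same series from degree $k$ onward. I would show that adjoining or deleting a monomial $c\,x^m$ with $c\in\mathbb N$ preserves membership in the smallest subsemiring closed under the star operation, provided the result still has nonnegative coefficients. The subtlety here, and the reason to phrase it carefully, is that subtraction is not available in $\mathbb N$: one cannot simply write $A = B + (\text{polynomial correction})$ if the correction has negative coefficients. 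Since the two sequences are assumed \emph{equal} for large $n$ (so any discrepancy lives in finitely many positions and the statement is symmetric in $a$ and $b$), I would instead build an explicit $\mathbb N$-linear representation for $(a_n)$ by taking a representation for $(b_n)$ and enlarging the module with finitely many extra coordinates that feed in the corrected initial values and then vanish, a standard ``prefix'' construction; by symmetry the converse direction follows.

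For parts (ii) and (iii), I would use closure form (3) directly. For the interleaving in (ii), given representations $(\mathbf M_i, V_i, T_i, \phi_i)$ for each $(a^{(i)}_n)$, I would take the direct sum $\bigoplus_i \mathbf M_i$ and define a single endomorphism $T$ that, on each $T^p$-step, applies all the $T_i$ simultaneously, together with a cyclic bookkeeping component of size $p$ that selects which residue class is currently being read; the output form $\phi$ reads off the active component. The key check is that $a_{i+np}=a^{(i)}_n$ is reproduced by tracking the cycle, which is routine. For the Hadamard (coefficientwise) product in (iii), the natural construction is the tensor product: taking $\mathbf M = \mathbf M_a \otimes \mathbf M_b$ over $\mathbb N$, $T = T_a\otimes T_b$, $V = V_a\otimes V_b$, and $\phi = \phi_a\otimes\phi_b$, one gets $\phi\circ T^n(V) = \big(\phi_a T_a^n V_a\big)\big(\phi_b T_b^n V_b\big) = a_n b_n$ from the multiplicativity of the tensor pairing. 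In representation terms this is the familiar Kronecker product $\lambda_a\otimes\lambda_b$, $M_a\otimes M_b$, $\gamma_a\otimes\gamma_b$; since $\mathbb N$ is closed under addition and multiplication, all entries remain in $\mathbb N$, so the resulting data is again a valid $\mathbb N$-representation.

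The main obstacle I anticipate is part (i), precisely because of the absence of subtraction in $\mathbb N$: arguments that would be one-line over a ring (``the series differ by a polynomial'') must be replaced by an explicit nonnegative construction, and one must be careful that the finitely-generated-module-with-shift picture of form (2), or the prefix-padding of a representation in form (1), genuinely captures an arbitrary finite alteration of the initial segment while keeping every coefficient in $\mathbb N$. Parts (ii) and (iii) are then essentially the standard interleaving and tensor-product constructions, and the only thing to verify is nonnegativity of the constructed data, which is automatic.
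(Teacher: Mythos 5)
Your proposal is correct, but it follows a genuinely different (if parallel) route from the paper. The paper's own proof is a two-line sketch that works entirely with condition (2) of the preceding Proposition: for (i), one adjoins to a shift-closed finitely generated module containing $(b_n)$ the finitely many sequences $(a_n), (a_{n+1}), \ldots, (a_{n+k-1})$ --- the result is still finitely generated and shift-closed, since the $k$-th shift of $(a_n)$ already lies in the old module; for (ii), one similarly assembles a shift-closed module out of the given ones (shifts of a merge are merges of shifts); and for (iii), the paper takes the Hadamard product of the two submodules, i.e.\ the module generated by the pairwise Hadamard products of generators, which is finitely generated and shift-closed because the shift is multiplicative coefficientwise. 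You instead work at the level of the representations (1)/(3), with a prefix-padding construction for (i), a direct sum with a cyclic counter for (ii), and the Kronecker product $\lambda_a\otimes\lambda_b$, $M_a\otimes M_b$, $\gamma_a\otimes\gamma_b$ for (iii) --- the last being exactly the matrix-level avatar of the paper's Hadamard product of modules. What your approach buys is explicitness (actual nonnegative matrices, with nonnegativity visibly preserved) and, via the Kronecker fallback, independence from any foundational discussion of tensor products of modules over the semiring $\mathbb N$, which your phrasing $\mathbf M_a\otimes\mathbf M_b$ would otherwise owe the reader a word about; what the paper's approach buys is brevity, since verifying shift-closure of a module replaces all matrix bookkeeping, and it makes (i) and (ii) genuinely one-line, whereas your constructions require some wiring --- in particular, in (ii) your fixed $\phi$ ``reading off the active component'' only makes sense if the rotation is implemented inside the state space (say $p$ copies of $\bigoplus_i \mathbf M_i$ cyclically permuted by $T$, with $\bigoplus_i T_i$ applied once per cycle), which is routine but should be said. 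Your diagnosis that (i) is the delicate part, because $\mathbb N$ lacks subtraction and the naive ``series differ by a polynomial'' argument fails, is exactly right and is implicitly why the paper routes everything through condition (2).
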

The latter sequence is called the {\em Hadamard product} of the two sequences.
\begin{proof}
(i) and (ii) are easy consequences of condition (2) in the proposition. For (ii), one uses (2) also, by by taking the Hadamard product of the two submodules. 
\end{proof}

In case (ii) of the Corollary, we say that $(a_n)$ is the {\em merge} (also called {\em interlacing}) of the 
sequences $(a^{(i)}_n)$, $i=0,\ldots, p-1$.

\subsection{Exponential polynomial and theorem of Berstel-Soittola}

Recall that each rational sequence $(a_n)$ may be uniquely expressed as an {\em exponential polynomial}, of the form
\begin{equation} \label{exponential_polynomial}
a_n=\sum_{i=1}^{k} P_i(n)\lambda_i^n,
\end{equation}
for $n$ large enough, where $P_i(n)$ is a nonzero polynomial in $n$ and the $\lambda_i$ are distinct nonzero complex numbers; see e.g. \cite{BR} 6.2. We call the $\lambda_i$ the {\em eigenvalues}
of the sequence $a_n$. We say that $a_n$ has a {\em dominating eigenvalue} if among the $\lambda_i$, there is a unique one of maximum modulus; for convenience, we include in this class the sequences ultimately equal to 0.

\begin{theorem}\label{merge}
A sequence is $\mathbb N$-rational if and only if it is a merge of rational sequences over $\mathbb N$ having a dominating eigenvalue.
\end{theorem}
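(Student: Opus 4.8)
The plan is to prove the two implications separately. The forward implication---that an $\mathbb N$-rational sequence is a merge of rational sequences over $\mathbb N$ each with a dominating eigenvalue---rests on the spectral theory of nonnegative matrices, while the reverse implication is the deep direction, essentially Soittola's theorem, and will be the main obstacle.

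For the forward direction I would begin with a nonnegative representation $a_n=\lambda M^n\gamma$ as in definition (i), with $M\in\mathbb N^{d\times d}$. Putting $M$ in Frobenius normal form and applying Perron--Frobenius theory to its irreducible diagonal blocks, the eigenvalues of $M$ of maximal modulus $\rho$ are all of the form $\rho\zeta$ with $\zeta$ a root of unity whose order divides the period of the corresponding block. Choosing $p$ to be a common multiple of these periods, the matrix $N=M^p$ then has a single eigenvalue of maximal modulus, namely $\rho^p$, since the $p$-th power collapses every peripheral $\rho\zeta$ to $\rho^p$ while leaving all other eigenvalues of strictly smaller modulus. I would then set $a^{(i)}_n=(\lambda M^i)N^n\gamma$ for $0\le i<p$, so that $(a_n)$ is the merge of the $a^{(i)}_n$; each is rational, lies over $\mathbb N$ as a subsequence of a nonnegative integer sequence, and carries the nonnegative representation $(\lambda M^i,N,\gamma)$, hence is itself $\mathbb N$-rational. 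The one gap is that some $a^{(i)}_n$ may fail to exhibit $\rho^p$---its coefficient in the exponential polynomial can vanish---so that its dominating modulus lies in the subdominant spectrum, where ties may occur. I would close this by induction on the number of distinct moduli among the eigenvalues actually occurring in the sequence: such an offending summand is $\mathbb N$-rational with strictly fewer distinct moduli, so by induction it is a merge of the desired type, and a merge of merges is again a merge, which terminates the recursion.

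For the reverse direction, Corollary \ref{prop}(ii) reduces the claim to a single summand: it suffices to prove that a rational sequence $(a_n)$ over $\mathbb N$ possessing a dominating eigenvalue $\lambda$ is $\mathbb N$-rational. First I would note that nonnegativity forces $\lambda$ to be a positive real: a non-real dominating eigenvalue would come accompanied by its conjugate of equal modulus, and a negative one would make the leading term alternate in sign for large $n$, both contradicting $a_n\ge 0$. One may then write $a_n=P(n)\lambda^n+r_n$, with $P$ a polynomial of positive leading coefficient and $r_n$ a combination of eigenvalues of strictly smaller modulus, decaying geometrically relative to the main term. The task is to manufacture a nonnegative representation of $(a_n)$ out of this strict spectral domination.

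This final step is the heart of the matter and the main obstacle. The difficulty is that $\lambda$ is in general an algebraic Perron number rather than a natural integer, so the elementary building block $(\lambda x)^{\ast}$ is unavailable; one must instead assemble the nonnegative realization from the reciprocal of the minimal polynomial of $\lambda$, using the strict gap to the subdominant spectrum to tame the remainder $r_n$ through closure under sum, Hadamard product (Corollary \ref{prop}(iii)) and merge. I would carry this out following Soittola's construction as presented in \cite{BR}, and I do not anticipate a materially shorter route using only the machinery assembled so far in this section.
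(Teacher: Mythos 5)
Your proposal should first be measured against what the paper actually does: the paper gives no proof of Theorem \ref{merge} at all. It attributes the ``only if'' part to Berstel and the converse to Soittola, refers to \cite{BR}, Chapter 8, for a complete proof, and notes that only Berstel's half is used in the sequel. Your handling of the Soittola direction (reduction to a single summand via Corollary \ref{prop}(ii), then deferral to Soittola's construction in \cite{BR}) is therefore exactly as non-committal as the paper itself, and unobjectionable. The issue is your claim to actually prove the Berstel direction, where there is a genuine gap.

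The gap is in your patch for the vanishing-coefficient case, and it sits at precisely the point where Berstel's theorem is hard. Your collapsing step acts on the spectrum of the representation matrix $M$, not on the eigenvalues of the sequence: Perron--Frobenius constrains only the \emph{peripheral} eigenvalues of $M$, those of modulus $\rho=\rho(M)$, to have the form $\rho\zeta$ with $\zeta$ a root of unity; the subdominant spectrum of $M$ is unconstrained. If the coefficient of $\rho^p$ vanishes in a section $a^{(i)}$, the maximal modulus $m_1$ among the eigenvalues actually occurring in $a^{(i)}$ satisfies $m_1<\rho^p$, and the eigenvalues tied at modulus $m_1$ need not be $m_1$ times roots of unity --- so no further power of any matrix collapses them, and nothing you have proved excludes this configuration (excluding it \emph{is} Berstel's theorem, so invoking it would be circular). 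Concretely, your induction does not make progress: the only nonnegative representation in hand for the offending section is $(\lambda M^i, M^p, \gamma)$, whose matrix still has spectral radius $\rho^p$ with already-collapsed peripheral part, so rerunning your construction yields $p'=1$ and returns the same sequence unchanged; the number of distinct moduli occurring in the sequence drops only in the special case $m_1=\rho$, not in general. What is missing is the key ingredient of the actual proof: a way to replace the representation by one whose spectral radius matches the true growth of the section --- via trimming of the linear representation and Pringsheim's theorem (the radius of convergence of a series with nonnegative coefficients is a singularity, hence here a pole, forcing a positive real dominating eigenvalue), combined with a well-founded induction on the \emph{dimension} of the representation rather than on the moduli of the sequence, as carried out in \cite{BR}, Chapter 8. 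Without that, your recursion stalls exactly on the cases it was introduced to handle.
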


This result is due to Berstel for the "only if" part, and to Soittola for the converse. A complete proof may found in \cite{BR}, Chapter 8. Here we need only Berstel's theorem.

It is a consequence of the theorem of Berstel that there exist rational sequences over $\mathbb N$ which are not $\mathbb N$-rational: see \cite{E} Theorem VI.6.1 and Example VI.6.1, or \cite{BR} Theorem 8.1.1 and Exercise 8.1.2. 

\subsection{An asymptotic lemma}

Given two sequences of positive real numbers $(a_n)$ and $(b_n)$, we shall write as usually $a_n\sim b_n$ if the quotient $a_n/b_n$ tends to 1 when $n$ tends to $\infty$. We also write 
$a_n\approx b_n$ to express the fact that for some positive constant $C$, one has $\lim_{k\rightarrow \infty}a_n/b_n=C$. 
Clearly $a_n\sim b_n$ implies $a_n\approx b_n$. 

\begin{corollary}\label{equivalent}
Let $a(v,n)$ be a family, indexed by the finite set $V$, of unbounded $\mathbb N$-rational sequences of positive integers. There exist an integer $p\geq 1$, real numbers $\lambda(v,i)\geq 1$ and integers $e(v,i)\geq 0$, for $v\in V$ and $i=0,...,p$, such that:

(i) for every  $v\in V$ and every $i=0,...,p, a(v,pn+i)\approx\lambda(v,i)^{n} n^{e(v,i)}$;

(ii) for every  $v\in V$, there exists $i=0,...,p$ such that $\lambda(v,i)>1$ or $e(v,i)\geq 1$;

(iii) for every $v\in V$, $\lambda (v,0)=\lambda(v,p)$ and $e(v,0)=e(v,p)$.
\end{corollary}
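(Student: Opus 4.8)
The plan is to combine the Berstel–Soittola theorem (Theorem \ref{merge}) with the exponential polynomial form \eqref{exponential_polynomial} and the elementary closure properties from Corollary \ref{prop}. The key observation is that each $a(v,\cdot)$ is $\mathbb N$-rational, hence by Theorem \ref{merge} it is a merge of finitely many rational sequences over $\mathbb N$, each having a dominating eigenvalue. The first step is to find a \emph{single} common period $p$ that works simultaneously for every $v\in V$: since $V$ is finite and each $v$ contributes finitely many merge-components, I would take $p$ to be a common multiple of all the individual periods (including a further factor if needed so that each residue-class subsequence $a(v,pn+i)$ is itself rational with a dominating eigenvalue). By Corollary \ref{prop}(i)--(ii) the property of being a merge with dominating eigenvalues is stable under passing to a common refinement of the period, so this reduction is harmless.

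Once $p$ is fixed, I would focus on a single subsequence $b_n := a(v,pn+i)$. This is a rational sequence over $\mathbb N$ with a dominating eigenvalue, so its exponential polynomial \eqref{exponential_polynomial} has a unique $\lambda_j$ of maximal modulus, say $\lambda=\lambda(v,i)$, with associated polynomial $P(n)$ of degree $e=e(v,i)$. The asymptotics are then routine: the dominating term $P(n)\lambda^n$ swamps all others, giving $b_n \approx \lambda^n n^{e}$, which is exactly (i). Here I must check the normalizations claimed in the statement: that $\lambda\ge 1$ and $e\ge 0$. Since $b_n$ consists of positive integers, the dominating eigenvalue cannot have modulus strictly less than $1$ (else $b_n\to 0$, impossible for unbounded positive integers, and even for a nonzero integer sequence the terms could not stay positive), and being dominating and real-contributing it is a positive real $\ge 1$; $e\ge 0$ is automatic as a polynomial degree.

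For (ii), the hypothesis is that each $a(v,\cdot)$ is \emph{unbounded}. If for some $v$ we had $\lambda(v,i)=1$ and $e(v,i)=0$ for \emph{all} $i$, then every subsequence $a(v,pn+i)$ would be $\approx 1$, hence bounded, so $a(v,\cdot)$ itself would be bounded, contradicting the hypothesis. Thus some residue class must exhibit genuine growth, giving (ii). For (iii), I would use the cyclic structure: the residue classes $i=0$ and $i=p$ differ by a full period, i.e.\ $a(v,p\cdot 0+p)=a(v,p(1)+0)$, so the sequences $(a(v,pn+p))_n$ and $(a(v,pn+0))_n$ are shifts of one another and therefore share the same dominating eigenvalue and the same degree; this forces $\lambda(v,0)=\lambda(v,p)$ and $e(v,0)=e(v,p)$.

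The main obstacle I anticipate is the bookkeeping around the common period: making sure that a \emph{single} $p$ simultaneously puts every $a(v,\cdot)$ into the merge-with-dominating-eigenvalue form and that passing to this refined period does not destroy the dominating-eigenvalue property of the individual components. The cleanest way to handle this is to invoke Theorem \ref{merge} to get, for each $v$, a period $p_v$, then set $p=\operatorname{lcm}_v p_v$ (adjusted if necessary), and verify via the exponential polynomial that extracting an arithmetic subprogression of a dominating-eigenvalue sequence again yields a dominating-eigenvalue sequence — the new eigenvalues being $p$-th powers $\lambda_j^{p}$, whose moduli preserve the strict domination since $|\lambda_j^{p}|=|\lambda_j|^{p}$ remains strictly maximal. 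Everything else is a direct unwinding of the asymptotics of an exponential polynomial with a single dominating term.
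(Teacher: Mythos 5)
Your proposal is correct and takes essentially the same route as the paper's proof: Berstel's half of Theorem \ref{merge}, refinement to a common period $p$ (justified, as you do, by noting that arithmetic subprogressions preserve the dominating-eigenvalue property since $|\lambda_j|^p$ keeps the strict domination), the exponential polynomial asymptotics with positivity and integrality forcing $\lambda(v,i)\geq 1$, unboundedness of some residue class giving (ii), and the shift identity $a(v,pn+p)=a(v,p(n+1))$ giving (iii). The only spot argued more loosely than in the paper is why the dominating eigenvalue is a positive real (the paper deduces it from $a_{n+1}/a_n\sim\lambda$ and positivity; alternatively, a non-real dominating eigenvalue of a real sequence would be accompanied by its conjugate of equal modulus), but this is a one-line repair, not a gap.
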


\begin{proof}
Let $(a_n)$ be an $\mathbb N$-rational sequence having a dominating eigenvalue, with $a_n>0$. We have then Eq. (\ref{exponential_polynomial}) with $\mid \lambda_1\mid > \mid\lambda_2\mid,\ldots, \mid \lambda_k\mid$, and nonzero $P_i$'s. Let $\alpha$ be the dominating coefficient of $P_1$, $e$ the degree of the latter and $\lambda=\lambda_1$. Then $a_n\sim \alpha n^e\lambda^n$. Since the $a_n$ are positive natural numbers, and since $a_{n+1}/a_n\sim    
\lambda$, we must have $\lambda\in \mathbb R_+$. We cannot have $\lambda <1$, since otherwise, $a_n$ being an integer, $a_n=0$ for $n$ large enough. Thus $\lambda \geq 1$. If $a_n$ is unbounded, then either $e>0$ or $\lambda >1$.

Note that if a rational sequence is the merge of $p$ sequences having a dominating eigenvalue, this is true also for each multiple of $p$. Therefore, using Theorem \ref{merge}, we see that there exists some $p$ such that each series $a(v,n+pi)$, $i=0,\ldots, p-1$ is $\mathbb N$-rational and has a dominating eigenvalue. 

By the first part of the proof, we see that (i) holds for $i=0,\ldots, p-1$. Now, we have $a(v,pn+p)=a(v,p(n+1))\approx \lambda(v,0)^{n+1}(n+1)^{e(v,0)}$ by the $i=0$ case. Therefore $a(v,pn+p)\approx  \lambda(v,0)^{n}n^{e(v,0)}$: thus (i) holds also for $i=p$ and moreover (iii) holds.

The sequence $a(v,n)$ is unbounded; hence for some $i=0,\ldots,p-1$, the sequence $a(pn+i)$ is unbounded and therefore either $e(v,i)>0$ or $\lambda(v,i)>1$. Thus (ii) holds.

\end{proof}

\section{$\mathbb N$-rationality of the rays in $SL_2$-tilings} 

Given a mapping $t: \mathbb Z^2\rightarrow K$, a point $M\in \mathbb Z^2$ and a nonzero vector $V\in \mathbb Z^2$, we consider the sequence $a_n=t(M+nV)$. Such a sequence will be called a {\em ray associated to} $t$. We call $M$ the {\em origin} of the ray and $V$ its {\em directing vector}. The ray is {\em horizontal} if $V=(1,0)$, {\em vertical} if $V=(0,1)$ and {\em diagonal} if $V=(1,1)$ (cf. Figure \ref{coord}). 

\begin{theorem} \label{rational}
Suppose that the frontier in Theorem \ref{tiling-variables} is ultimately periodic and that each variable is equal to 1. Then each ray associated to $t$ is $\mathbf N$-rational.
\end{theorem}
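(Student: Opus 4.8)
The plan is to reduce everything to the matrix description of the tiling furnished by Corollary \ref{t(P)=mu(w)}, namely $t(P)=\mu(w)_{22}$ where $w$ is the word of $P$, and then to exploit that $\mu(x),\mu(y)$, and hence every transition block that occurs along a ray, have entries in $\mathbb N$. First I would record how the word changes as one moves along a ray. By the geometry underlying Theorem \ref{tiling-variables} (Figure \ref{words}), a step East appends a block $y^{l}x$ to the word and a step South prepends a block $yx^{k}$, so a step in the diagonal direction $(1,1)$ does both, sending $w$ to $yx^{k}wy^{l}x$. Thus for the three named directions the word only \emph{grows}: writing $M_n=\mu(w_n)$ for the $n$-th point of the ray, one gets a recursion $M_{n}=L_nM_{n-1}R_n$ in which $L_n=\mu(yx^{k_n})$ and $R_n=\mu(y^{l_n}x)$ are products of $\mu(x),\mu(y)$, hence nonnegative integer matrices, with $L_n=I$ in the horizontal case and $R_n=I$ in the vertical case. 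Because the frontier is ultimately periodic, the exponents $k_n,l_n$, and therefore the matrices $L_n,R_n$, are ultimately periodic in $n$, say of period $p$.

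The main point is to turn the two-sided multiplication $M_n=L_nM_{n-1}R_n$ of the diagonal case into a one-sided linear recursion. For this I would \emph{vectorize}: set $z_n=\mathrm{vec}(M_n)\in\mathbb N^{4}$, so that $z_n=A_nz_{n-1}$ with $A_n=R_n^{t}\otimes L_n\in\mathbb N^{4\times4}$, and observe that $a_n=t(M+nV)=(M_n)_{22}=\phi(z_n)$, where $\phi\in\mathbb N^{1\times4}$ is the coordinate functional extracting the $(2,2)$-entry. Now I use periodicity: grouping the index as $n=mp+r$ and putting $B_r=A_{r+p-1}\cdots A_r$ (a fixed nonnegative integer matrix once $n$ lies in the periodic range), one gets $z_{(m+1)p+r}=B_rz_{mp+r}$, so that for large $m$ one has $a_{mp+r}=\phi\,B_r^{\,m}\gamma_r$ with nonnegative data; this is exactly form (i) of the definition of $\mathbb N$-rationality. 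Hence each residue subsequence $(a_{mp+r})_m$ is $\mathbb N$-rational by Corollary \ref{prop}(i) (it is ultimately of this form), and since $(a_n)$ is their merge it is $\mathbb N$-rational by Corollary \ref{prop}(ii). The horizontal and vertical rays are the special cases $L_n=I$ and $R_n=I$. For the oppositely oriented rays (directions $(-1,0),(0,-1),(-1,-1)$), where the word would instead shrink, I would note that the rotation $(u,v)\mapsto(-u,-v)$ carries $t$ to the $SL_2$-tiling of the rotated frontier, which is again ultimately periodic, and interchanges these directions with the three already treated.

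The step I expect to be the real obstacle is not any single computation but making the two-sided growth genuinely linear; the Kronecker-product trick above is what resolves it, the crucial feature being that it keeps all data in $\mathbb N$, so that one lands in condition (i) rather than merely proving rationality. A secondary difficulty is the bookkeeping of periodicity: one must check that the exponents $k_n,l_n$ encountered \emph{along the ray} (not merely along the frontier) are ultimately periodic. This follows from the observation that the left end of the word of $(u,v)$ depends only on the row $v$ and its right end only on the column $u$, so that the blocks appended or prepended along the ray are read off from a periodic stretch of the frontier for $n$ large.

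Finally, for a ray in an arbitrary direction $(a,b)$ the same argument runs verbatim whenever the word grows at both ends (that is, $a,b\ge0$ up to the rotation above). The remaining mixed-sign directions are best handled through the tame, i.e. rank $2$, structure: since the left and right ends of the word separate the dependence on $v$ and on $u$, one obtains a bilinear expression $t(u,v)=\sum_{i}\rho_i(v)\kappa_i(u)$ in which each boundary sequence $\rho_i$ and $\kappa_i$ is nonnegative and, by the one-sided argument already used, individually $\mathbb N$-rational in $v$ and in $u$ respectively. Then along any ray $a_n$ becomes a finite sum of Hadamard products of $\mathbb N$-rational sequences, and is $\mathbb N$-rational by Corollary \ref{prop}(iii).
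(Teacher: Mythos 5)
Your proof of the case that the paper actually treats (directing vector $(a,b)$ with $a,b\geq 0$) is correct and follows essentially the same route as the paper: reduce to points ultimately below the frontier, apply Corollary \ref{t(P)=mu(w)}, use ultimate periodicity of the frontier to see that along each residue class mod $p$ the word of the $n$-th point has the form $u'^{\,n}vu^n$ (equivalently, your recursion $M_n=L_nM_{n-1}R_n$ with ultimately periodic $L_n,R_n$), and then linearize. Your vectorization $z_n=\mathrm{vec}(M_n)$ with $A_n=R_n^{t}\otimes L_n$ is in substance the same final step as the paper's, which writes $a_{i+nq}=\lambda_i {M'_i}^{n}N_iM_i^{n}\gamma_i$ and expands this as an $\mathbb N$-linear combination of Hadamard products of $\mathbb N$-rational sequences via Corollary \ref{prop}(iii): the standard proof that $\mathbb N$-rationality is closed under Hadamard product is precisely the Kronecker-product construction, so the two presentations coincide, yours having the small merit of landing directly in form (i) without invoking the closure property. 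Your $180^{\circ}$-rotation reduction of the directions $(-1,0)$, $(0,-1)$, $(-1,-1)$ is also correct (the rotation preserves the $SL_2$ condition and carries the frontier to an admissible, ultimately periodic frontier).

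One caveat: your final paragraph on the mixed-sign directions is not sound as written. Tameness gives a real rank-$2$ decomposition $t(u,v)=\sum_i\rho_i(v)\kappa_i(u)$, but nothing forces the factors in such a decomposition to be nonnegative, let alone $\mathbb N$-rational; and the concrete splitting $w=s(v)\,m\,p(u)$ of the word through a fixed reference point of the frontier is only valid in a quadrant-shaped region, which a ray of direction, say, $(1,-1)$ eventually leaves (its row coordinate is unbounded in the wrong direction). Handling those rays requires a separate analysis in which the word both gains letters at one end and \emph{loses} them at the other, so the monotone matrix-product argument does not apply verbatim. This gap, however, concerns only cases that the paper itself explicitly leaves to the reader (``We prove the theorem in the case where the directing vector $V=(a,b)$ satisfies $a,b\geq 0$''), so on the ground the paper actually covers, your argument is complete.
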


\begin{proof}
We prove the theorem in the case where the directing vector $V=(a,b)$ satisfies $a,b\geq 0$. The other cases are left to the reader and will not be used in the sequel.

1. The points $M+nV$ are, for $n$ large enough, all above or all below the frontier, since the frontier is admissible and by the hypothesis on the directing vector. Since by Lemma \ref{prop} $\mathbb N$-rationality is not affected by changing a finite number of values, we may assume that they are all below. 

2. Let $w_n$ be the word associated to the point $M+nv$. 
By ultimate periodicity of the frontier, there exists an integer $q\geq 1$ and words $v_0,...,v_{q-1}$, $u'_0,...,u'_{q-1},u_0,...,u_{q-1}$ such that for any $i=0,...,q-1$ and for $n$ large enough, $w_{i+nq}={u'}_i^nv_iu_i^n$. This is seen by inspection.

3. It follows from Corollary \ref{t(P)=mu(w)} that for some $2\times 2$ matrices $M'_i,N_i,M_i$ over $\mathbf N$, one has for any $i=0,\ldots,q-1$ and $n$ large enough, $a_{i+nq} = \lambda_i {M'_i}^nN_iM_i^n\gamma_i$, where $\lambda_i\in \mathbf N^{1\times 2},\gamma_i\in \mathbf N^{2\times1}$.

4. Consequently, by Corollary \ref{prop}, the sequence $(a_n)$ is $\mathbb N$-rational. Indeed, a sequence of the form  $\lambda {M'}^nNM^n\gamma$ is a $\mathbf N$-linear combination of sequences, each of which is a Hadamard product of two $\mathbf N$-rational sequences. 
\end{proof}

In the case of a purely periodic frontier (this is the case of the $SL_2$-tilings associated to the friezes of type $\tilde {\mathbb A}_n$), there is a sharpening of this result; it explains why the linear recursions in these tilings are essentially of length two.

\begin{proposition}
Let $t$ be the $SL_2$-tiling associated to a purely periodic frontier, with variables equal to 1, associated to the admissible frontier ${}^\infty w^\infty$, for some word $w\in \{x,y\}^*$. Let $a$ (resp.$b$) be the number of $x$'s (resp. of $y$'s) in $w$, and let $p$ be the lowest common multiple of $a,b$. Then each diagonal ray is a merge of $p$ rational sequences, each of which satisfies the linear recursion associated to the characteristic polynomial of the matrix $\mu(w)^{p/a+p/b}$.
\end{proposition}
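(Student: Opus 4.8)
The plan is to split a diagonal ray into its $p$ arithmetic subsequences $n\mapsto a_{i+np}$ ($i=0,\dots,p-1$) and to prove that each one individually satisfies the announced degree-two recursion; the ray is then their merge by definition. Throughout write $W=\mu(w)$, index the letters of the bi-infinite frontier ${}^{\infty}w^{\infty}$ as $(\ell_j)_{j\in\mathbb Z}$ with $\ell_{j+|w|}=\ell_j$, and for a residue $j$ write $w^{(j)}$ for the length-$|w|$ rotation of $w$ read starting at position $j$, and $W_j=\mu(w^{(j)})$. Since $\mu(\alpha\beta)$ and $\mu(\beta\alpha)$ are conjugate in $SL_2$, every $W_j$ is conjugate to $W$, so all the $W_j$ share the characteristic polynomial of $\mu(w)$.

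First I would pin down the shape of the words along the ray. Moving the point by $(1,1)$ prepends a block $yx^{k}$ and appends a block $y^{l}x$ to its associated word (the $w\mapsto yx^{k}wy^{l}x$ pattern read off from Figure \ref{words}), and by periodicity of the frontier the integers $k,l$ are periodic in $n$: the appended blocks have period $a$ (the vertical projection advances one full period of the frontier every $a$ steps) and the prepended blocks have period $b$. Hence over a window of $p=\mathrm{lcm}(a,b)$ steps the suffix grows by $p/a$ copies of one fixed rotation of $w$ and the prefix by $p/b$ copies of another. This yields, for each residue $i$ and $n$ large, a word of the form $w_{i+np}=(u_i')^{\,n}\,v_i\,u_i^{\,n}$ (as in step 2 of the proof of Theorem \ref{rational}), where $u_i'=(w^{(j_0)})^{p/b}$, $u_i=(w^{(j_1)})^{p/a}$ for suitable residues $j_0,j_1$, and $v_i$ is the fixed central factor, running from a position of residue $j_0$ to the position just before one of residue $j_1$.

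Next, Corollary \ref{t(P)=mu(w)} gives $a_{i+np}=\mu(w_{i+np})_{22}=(0,1)\,A^{n}B\,C^{n}\,(0,1)^{t}$ with $A=\mu(u_i')=W_{j_0}^{p/b}$, $B=\mu(v_i)$ and $C=\mu(u_i)=W_{j_1}^{p/a}$. The crux—and the step I expect to carry the whole argument—is an intertwining identity. Because $v_i$ runs from residue $j_0$ to residue $j_1$, periodicity of the frontier gives the word identity $v_i\,w^{(j_1)}=w^{(j_0)}\,v_i$ (append one more period on the right, then peel off the first period on the left using $\ell_{t+|w|}=\ell_t$). Applying $\mu$ yields $B\,W_{j_1}=W_{j_0}\,B$, hence $B\,C^{n}=W_{j_0}^{(p/a)n}B$, and therefore
$$A^{n}B\,C^{n}=W_{j_0}^{(p/b)n}\,W_{j_0}^{(p/a)n}\,B=\bigl(W_{j_0}^{\,p/a+p/b}\bigr)^{n}B.$$

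Finally I would read off the conclusion: $a_{i+np}=(0,1)\bigl(W_{j_0}^{\,p/a+p/b}\bigr)^{n}\bigl(B\,(0,1)^{t}\bigr)$ has the form $\lambda N^{n}\gamma$ with $N=W_{j_0}^{\,p/a+p/b}$, so by Cayley--Hamilton (exactly as in the discussion preceding Theorem \ref{merge}) it satisfies the linear recursion attached to the characteristic polynomial of $N$; and since $W_{j_0}$ is conjugate to $W=\mu(w)$, that polynomial is the characteristic polynomial of $\mu(w)^{p/a+p/b}$. As $i$ ranges over $0,\dots,p-1$ these are precisely the $p$ sequences whose merge is the diagonal ray, and the finitely many small-$n$ terms not of the stated form are irrelevant by Corollary \ref{prop}(i); this proves the proposition. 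The one genuinely substantive point is the intertwining $B\,W_{j_1}=W_{j_0}\,B$: it is what collapses the product $A^{n}B\,C^{n}$ to a single power, reducing the naive four eigenvalues $\theta^{\pm p/a\pm p/b}$ (where $\theta^{\pm1}$ are the eigenvalues of $\mu(w)$) to the two eigenvalues $\theta^{\pm(p/a+p/b)}$, which is exactly why the recursion has length two.
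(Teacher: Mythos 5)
Your proof is correct and is essentially the paper's own argument: the paper gives no detailed proof, saying only that one must ``mimick with more precision Part 2 of the proof of Theorem \ref{rational}'' and leaving the details to the reader, and your write-up is precisely that refinement, identifying $w_{i+np}=(w^{(j_0)})^{np/b}\,v_i\,(w^{(j_1)})^{np/a}$ with the correct periods $a$ and $b$ for the two ends of the word. The one detail the paper's sketch does not spell out --- your intertwining identity $v_i\,w^{(j_1)}=w^{(j_0)}\,v_i$, hence $B\,W_{j_1}=W_{j_0}\,B$, which collapses $A^nBC^n$ to $\bigl(W_{j_0}^{\,p/a+p/b}\bigr)^nB$ and, via conjugacy of rotations, transfers the characteristic polynomial to $\mu(w)^{p/a+p/b}$ --- is exactly the right way to complete it and is what accounts for the length-two recursion.
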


This result is illustrated below. We have $w=xxy$, $a=2$, $b=1$, $p=2$, $p/a+p/b=3$, $$\mu(w)=\left(\begin{array}{cc}1&1\\0&1\end{array}\right)^2 \left(\begin{array}{cc}1&0\\1&1\end{array}\right)=\left(\begin{array}{cc}3&2\\1&1\end{array}\right),$$
$$\mu(w)^3=\left(\begin{array}{cc}3&2\\1&1\end{array}\right)^3=\left(\begin{array}{cc}41&30\\15&11\end{array}\right).$$
The characteristic polynomial of the latter matrix is $t^2-52t+1$ and the associated linear recursion is $u_{n+2}=52u_{n+1}-1$. We see indeed in Figure \ref{tiling} that $1351=52.26-1$, $2131=52.41-1$ and $5042=52.97-2$.

\begin{figure}
$$\begin{array}{lllllllllllllllllllllllllllllllllll}
&&&&&&&&&&&&1&1\\
&&&&&&&&&&1&1&1&2 \\
&&&&&&&&1&1&1&2&3&7\\
&&&\ldots&&&1&\bf 1&\bf 1&\bf 2&3&7&11&26 \\
&&&&1&1&1&2&3&7&11&26&41&97 \\
&&1&1&1&2&3&7&11&\bf 26&\bf 41&\bf 97&153&362\\
1&1&1&2&3&7&11&26&41&97&153&362&571 &1351\\
1&2&3&7&11&26&41&97&153&362&571&\bf 1351&\bf  2131&\bf 5042\\
&&&&&&&&&\ldots
\end{array}
$$
\caption{an $SL_2$-tiling}\label{tiling}
\end{figure}

In order to prove the proposition, one has to mimick with more precision Part 2. of the proof of the theorem. Details are left to the reader.

\section{Friezes}\label{frieze}
\subsection{Numerical friezes}\label{numfr}
Let $Q$ be a quiver (a directed graph) with set of vertices $V$ and set of arrows $E$. One assumes that it is acyclic. Associate to each vertex $v$ the sequence $a(v,n)$ of numbers defined by the recursion: $a(v,0)=1$ and 
\begin{equation}\label{friserecursion}
\quad a(v,n+1)=\frac{1+\prod_{w\rightarrow v}a(w,n+1)\prod_{v\rightarrow w}a(w,n)}{a(v,n)}  .
\end{equation}

The family of sequences $a(v,n)$, one for each vertex $v$, is called the {\em frieze} associated to $Q$. The fact that the quiver is acyclic guarantees the existence and the unicity of the sequences. A priori these numbers are positive rational numbers. A remarkable fact, due to Fomin and Zelevinski, is a consequence of their {\em Laurent phenomenon} \cite{fominzel, fomin}: the sequences are natural integers. That is, in the above fraction, the denominator always divides the numerator. 
This will proved below.

The terminology "frieze" comes from the fact that one may take infinitely many copies of the quiver $Q$, one copy $Q_n$ for each natural integer, with corresponding vertices $v_n$; one adds arrows $v_n\rightarrow w_{n+1}$ if in $Q$ one has $w\rightarrow v$; then the labelling $a(v,n)$ of vertex $v_n$ is obtained recursively by the previous formula. It means that the label of a vertex $v_{n+1}$ is obtained as follows: take the product of the labels of the sources of the ingoing arrows, add 1, and divide by the label of $v_n$. For example, in Figure \ref{friseA2}, the value 26 is $7\times 11+1$ divided by 3.

These recursions, although highly nonlinear, produce sometimes rational, or even $\mathbb N$-rational, sequences. In this case, we say that the frieze is {\em rational}, or $\mathbb N$-{\em rational}. This is the case for example when the quiver has two vertices with two edges: then one obtains the Fibonacci numbers of even rank. 

The question of rationality of the frieze is the central question which will be answered in this article.

Let $G$ be an undirected graph without loops. We say that a {\em frieze is of type} $G$ if the quiver $Q$ on which it is constructed is obtained from $G$ by some acyclic orientation of its edges.

The friezes shown in the Figures \ref{friseA2}, \ref{friseD7} and \ref{friseE6} are of the type indicated, which is a graph defined in Section \ref{Dd}.

\setlength{\unitlength}{1cm}

\setlength{\textwidth}{17.59cm}
\setlength{\oddsidemargin}{-0.54cm}

\begin{figure}[ht]
\begin{center}
\begin{tikzpicture} [>=latex, scale=0.8]
\node at (0,0) (a) {$1$};
\node at (3,0) (b) {$2$};
\node at (6,0) (c) {$11$};
\node at (9,0) (d) {$97$};
\node at (12,0) (e) {$571$};

\node at (1.5,0.75) (f) {$1$};
\node at (4.5,0.75) (g) {$7$};
\node at (7.5,0.75) (h) {$41$};
\node at (10.5,0.75) (i) {$362$};
\node at (13.5,0.75) (j) {$2131$};

\node at (1.5,-0.75) (k) {$1$};
\node at (4.5,-0.75) (l) {$3$};
\node at (7.5,-0.75) (m) {$26$};
\node at (10.5,-0.75) (n) {$153$};
\node at (13.5,-0.75) (o) {$1351$};

\draw(a)[->] -- (f);
\draw(a)[->] -- (k);
\draw(f)[->] -- (b);
\draw(k)[->] -- (b);
\draw(b)[->] -- (g);
\draw(b)[->] -- (l);
\draw(g)[->] -- (c);
\draw(l)[->] -- (c);
\draw(c)[->] -- (h);
\draw(c)[->] -- (m);
\draw(h)[->] -- (d);
\draw(m)[->] -- (d);
\draw(d)[->] -- (i);
\draw(d)[->] -- (n);
\draw(i)[->] -- (e);
\draw(n)[->] -- (e);
\draw(e)[->] -- (j);
\draw(e)[->] -- (o);
\draw(k)[->] -- (f);
\draw(l)[->] -- (g);
\draw(m)[->] -- (h);
\draw(n)[->] -- (i);
\draw(o)[->] -- (j);
\draw [->] (f) .. controls (2.7, -0.75) .. (l);
\draw [->] (g) .. controls (5.7, -0.75) .. (m);
\draw [->] (h) .. controls (8.7, -0.75) .. (n);
\draw [->] (i) .. controls (11.7, -0.75) .. (o);
\end{tikzpicture}
\end{center}
\caption{A frieze of type $\widetilde{\mathbb{A}}_2$}\label{friseA2}
\end{figure}

\setlength{\unitlength}{1cm}

\setlength{\textwidth}{17.59cm}
\setlength{\oddsidemargin}{-0.54cm}

\newlength{\abscisse}
\setlength{\abscisse}{0cm}

\newlength{\ordonnee}
\setlength{\ordonnee}{0cm}

\newlength{\pashorizontal}
\setlength{\pashorizontal}{3cm}

\newlength{\pasvertical}
\setlength{\pasvertical}{1cm}

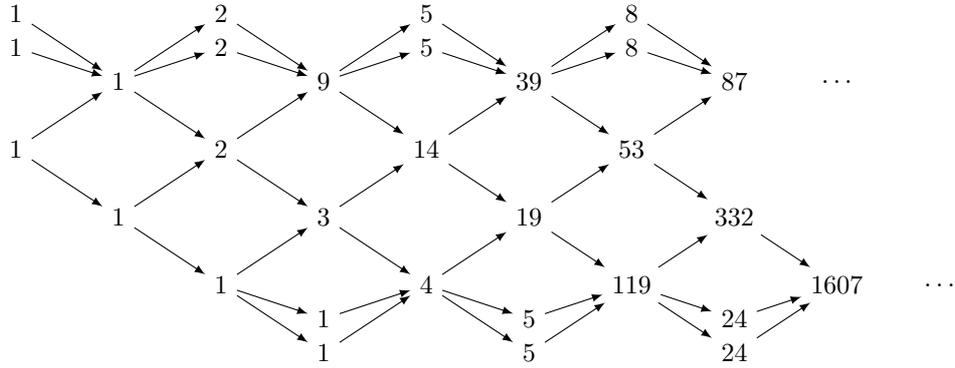
\begin{figure}[ht]
\begin{center}
\begin{tikzpicture} [>=latex, scale=0.9, transform shape]
\node at (\abscisse,\ordonnee) (a) {$1$};		\addtolength{\abscisse}{\pashorizontal}
\node at (\abscisse,\ordonnee) (b) {$9$};		\addtolength{\abscisse}{\pashorizontal}
\node at (\abscisse,\ordonnee) (c) {$39$};		\addtolength{\abscisse}{\pashorizontal}
\node at (\abscisse,\ordonnee) (d) {$87$};	\addtolength{\abscisse}{0.5\pashorizontal}
\node at (\abscisse,\ordonnee) (e) {$\ldots$};

\setlength{\abscisse}{-0.5\pashorizontal}
\setlength{\ordonnee}{\pasvertical}

\node at (\abscisse,\ordonnee) (aa) {$1$};	\addtolength{\abscisse}{\pashorizontal}
\node at (\abscisse,\ordonnee) (ba) {$2$};	\addtolength{\abscisse}{\pashorizontal}
\node at (\abscisse,\ordonnee) (ca) {$5$};		\addtolength{\abscisse}{\pashorizontal}
\node at (\abscisse,\ordonnee) (da) {$8$};

\setlength{\abscisse}{-0.5\pashorizontal}
\setlength{\ordonnee}{0.5\pasvertical}

\node at (\abscisse,\ordonnee) (aam) {$1$};	\addtolength{\abscisse}{\pashorizontal}
\node at (\abscisse,\ordonnee) (bam) {$2$};	\addtolength{\abscisse}{\pashorizontal}
\node at (\abscisse,\ordonnee) (cam) {$5$};		\addtolength{\abscisse}{\pashorizontal}
\node at (\abscisse,\ordonnee) (dam) {$8$};

\draw(aam)[->] -- (a);
\draw(bam)[->] -- (b);
\draw(cam)[->] -- (c);
\draw(dam)[->] -- (d);

\draw(aa)[->] -- (a);
\draw(ba)[->] -- (b);
\draw(ca)[->] -- (c);
\draw(da)[->] -- (d);

\draw(a)[->] -- (ba);
\draw(b)[->] -- (ca);
\draw(c)[->] -- (da);

\draw(a)[->] -- (bam);
\draw(b)[->] -- (cam);
\draw(c)[->] -- (dam);

\setlength{\abscisse}{-0.5\pashorizontal}
\setlength{\ordonnee}{-\pasvertical}

\node at (\abscisse,\ordonnee) (a1) {$1$};	\addtolength{\abscisse}{\pashorizontal}
\node at (\abscisse,\ordonnee) (b1) {$2$};	\addtolength{\abscisse}{\pashorizontal}
\node at (\abscisse,\ordonnee) (c1) {$14$};	\addtolength{\abscisse}{\pashorizontal}
\node at (\abscisse,\ordonnee) (d1) {$53$};

\draw(a1)[->] -- (a);
\draw(b1)[->] -- (b);
\draw(c1)[->] -- (c);
\draw(d1)[->] -- (d);

\draw(a)[->] -- (b1);
\draw(b)[->] -- (c1);
\draw(c)[->] -- (d1);

\setlength{\abscisse}{0cm}
\setlength{\ordonnee}{-2\pasvertical}

\node at (\abscisse,\ordonnee) (a2) {$1$};	\addtolength{\abscisse}{\pashorizontal}
\node at (\abscisse,\ordonnee) (b2) {$3$};	\addtolength{\abscisse}{\pashorizontal}
\node at (\abscisse,\ordonnee) (c2) {$19$};	\addtolength{\abscisse}{\pashorizontal}
\node at (\abscisse,\ordonnee) (d2) {$332$};

\draw(a1)[->] -- (a2);
\draw(b1)[->] -- (b2);
\draw(c1)[->] -- (c2);
\draw(d1)[->] -- (d2);

\draw(a2)[->] -- (b1);
\draw(b2)[->] -- (c1);
\draw(c2)[->] -- (d1);

\setlength{\abscisse}{0.5\pashorizontal}
\setlength{\ordonnee}{-3\pasvertical}

\node at (\abscisse,\ordonnee) (a3) {$1$};	\addtolength{\abscisse}{\pashorizontal}
\node at (\abscisse,\ordonnee) (b3) {$4$};	\addtolength{\abscisse}{\pashorizontal}	
\node at (\abscisse,\ordonnee) (c3) {$119$};	\addtolength{\abscisse}{\pashorizontal}
\node at (\abscisse,\ordonnee) (d3) {$1607$};	\addtolength{\abscisse}{0.5\pashorizontal}
\node at (\abscisse,\ordonnee) (e3) {$\ldots$};

\draw(a2)[->] -- (a3);
\draw(b2)[->] -- (b3);
\draw(c2)[->] -- (c3);
\draw(d2)[->] -- (d3);

\draw(a3)[->] -- (b2);
\draw(b3)[->] -- (c2);
\draw(c3)[->] -- (d2);

\setlength{\abscisse}{\pashorizontal}
\addtolength{\ordonnee}{-0.5\pasvertical}

\node at (\abscisse,\ordonnee) (a34) {$1$};	\addtolength{\abscisse}{\pashorizontal}
\node at (\abscisse,\ordonnee) (b34) {$5$};	\addtolength{\abscisse}{\pashorizontal}
\node at (\abscisse,\ordonnee) (c34) {$24$};	\addtolength{\abscisse}{\pashorizontal}

\draw(a3)[->] -- (a34);
\draw(b3)[->] -- (b34);
\draw(c3)[->] -- (c34);

\draw(a34)[->] -- (b3);
\draw(b34)[->] -- (c3);
\draw(c34)[->] -- (d3);

\setlength{\abscisse}{\pashorizontal}
\setlength{\ordonnee}{-4\pasvertical}

\node at (\abscisse,\ordonnee) (a4) {$1$};	\addtolength{\abscisse}{\pashorizontal}
\node at (\abscisse,\ordonnee) (b4) {$5$};	\addtolength{\abscisse}{\pashorizontal}
\node at (\abscisse,\ordonnee) (c4) {$24$};	\addtolength{\abscisse}{\pashorizontal}

\draw(a3)[->] -- (a4);
\draw(b3)[->] -- (b4);
\draw(c3)[->] -- (c4);

\draw(a4)[->] -- (b3);
\draw(b4)[->] -- (c3);
\draw(c4)[->] -- (d3);

\end{tikzpicture}
\end{center}
\caption{A frieze of type $\widetilde{D}_7$}\label{friseD7}
\end{figure}

%\newlength{\abscisse}
\setlength{\abscisse}{0cm}

\newlength{\pas}
\setlength{\pas}{1.2cm}

\begin{figure}[ht]
\begin{center}
\begin{tikzpicture} [>=latex, scale=0.7,transform shape]
\node at (\abscisse,0) (a) {$1$}; 		\addtolength{\abscisse}{\pas}
\node at (\abscisse,0) (b) {$1$};		\addtolength{\abscisse}{\pas}
\node at (\abscisse,0) (c) {$1$};			\addtolength{\abscisse}{\pas}
\node at (\abscisse,0) (d) {$2$};		\addtolength{\abscisse}{\pas}
\node at (\abscisse,0) (e) {$3$};		\addtolength{\abscisse}{\pas}
\node at (\abscisse,0) (f) {$28$};		\addtolength{\abscisse}{\pas}
\node at (\abscisse,0) (g) {$2$};		\addtolength{\abscisse}{\pas}
\node at (\abscisse,0) (h) {$19$};		\addtolength{\abscisse}{\pas}
\node at (\abscisse,0) (i) {$245$};		\addtolength{\abscisse}{\pas}
\node at (\abscisse,0) (j) {$10$};		\addtolength{\abscisse}{\pas}	
\node at (\abscisse,0) (k) {$129$};		\addtolength{\abscisse}{\pas}
\node at (\abscisse,0) (l) {$8762$};		\addtolength{\abscisse}{\pas}
\node at (\abscisse,0) (m) {$13$};		\addtolength{\abscisse}{\pas}
\node at (\abscisse,0) (n) {$883$};		\addtolength{\abscisse}{\pas}
\node at (\abscisse,0) (o) {$78574$};	

\setlength{\abscisse}{2 \pas}
\addtolength{\abscisse}{-0.5\pas}

\node at (\abscisse,1) (ca) {$1$};		\addtolength{\abscisse}{3\pas}
\node at (\abscisse,1) (fa) {$3$};		\addtolength{\abscisse}{3\pas}
\node at (\abscisse,1) (ia) {$19$};		\addtolength{\abscisse}{3\pas}
\node at (\abscisse,1) (la) {$129$};		\addtolength{\abscisse}{3\pas}
\node at (\abscisse,1) (oa) {$883$};		\addtolength{\abscisse}{\pas}
\node at (\abscisse,1) (pa) {$\ldots$};

\setlength{\abscisse}{2 \pas}
\addtolength{\abscisse}{-0.5\pas}

\node at (\abscisse,-1) (cb) {$1$};		\addtolength{\abscisse}{3\pas}
\node at (\abscisse,-1) (fb) {$3$};		\addtolength{\abscisse}{3\pas}
\node at (\abscisse,-1) (ib) {$19$};		\addtolength{\abscisse}{3\pas}
\node at (\abscisse,-1) (lb) {$129$};		\addtolength{\abscisse}{3\pas}
\node at (\abscisse,-1) (ob) {$883$};		\addtolength{\abscisse}{\pas}
\node at (\abscisse,-1) (pb) {$\ldots$};

\setlength{\abscisse}{\pas}

\node at (\abscisse,2) (caa) {$1$};		\addtolength{\abscisse}{3\pas}
\node at (\abscisse,2) (faa) {$2$};		\addtolength{\abscisse}{3\pas}
\node at (\abscisse,2) (iaa) {$2$};		\addtolength{\abscisse}{3\pas}
\node at (\abscisse,2) (laa) {$10$};		\addtolength{\abscisse}{3\pas}
\node at (\abscisse,2) (oaa) {$13$};

\setlength{\abscisse}{\pas}

\node at (\abscisse,-2) (cbb) {$1$};		\addtolength{\abscisse}{3\pas}
\node at (\abscisse,-2) (fbb) {$2$};		\addtolength{\abscisse}{3\pas}
\node at (\abscisse,-2) (ibb) {$2$};		\addtolength{\abscisse}{3\pas}
\node at (\abscisse,-2) (lbb) {$10$};		\addtolength{\abscisse}{3\pas}
\node at (\abscisse,-2) (obb) {$13$};

\draw(a)[->] -- (b);
\draw(b)[->] -- (c);
\draw(c)[->] -- (d);
\draw(d)[->] -- (e);
\draw(e)[->] -- (f);
\draw(f)[->] -- (g);
\draw(g)[->] -- (h);
\draw(h)[->] -- (i);
\draw(i)[->] -- (j);
\draw(j)[->] -- (k);
\draw(k)[->] -- (l);
\draw(l)[->] -- (m);
\draw(m)[->] -- (n);
\draw(n)[->] -- (o);

\draw(ca)[->] -- (c);
\draw(fa)[->] -- (f);
\draw(ia)[->] -- (i);
\draw(la)[->] -- (l);
\draw(oa)[->] -- (o);

\draw(cb)[->] -- (c);
\draw(fb)[->] -- (f);
\draw(ib)[->] -- (i);
\draw(lb)[->] -- (l);
\draw(ob)[->] -- (o);

\draw(caa)[->] -- (ca);
\draw(faa)[->] -- (fa);
\draw(iaa)[->] -- (ia);
\draw(laa)[->] -- (la);
\draw(oaa)[->] -- (oa);

\draw(cbb)[->] -- (cb);
\draw(fbb)[->] -- (fb);
\draw(ibb)[->] -- (ib);
\draw(lbb)[->] -- (lb);
\draw(obb)[->] -- (ob);

\draw(c)[->] -- (fa);
\draw(f)[->] -- (ia);
\draw(i)[->] -- (la);
\draw(l)[->] -- (oa);

\draw(c)[->] -- (fb);
\draw(f)[->] -- (ib);
\draw(i)[->] -- (lb);
\draw(l)[->] -- (ob);

\draw(ca)[->] -- (faa);
\draw(fa)[->] -- (iaa);
\draw(ia)[->] -- (laa);
\draw(la)[->] -- (oaa);

\draw(cb)[->] -- (fbb);
\draw(fb)[->] -- (ibb);
\draw(ib)[->] -- (lbb);
\draw(lb)[->] -- (obb);

\setlength{\abscisse}{4\pas}
\addtolength{\abscisse}{-0.25\pas}

%\newlength{\ordonnee}
\setlength{\ordonnee}{0.5cm}

\draw [->] (c) .. controls (\abscisse, \ordonnee) .. (e.north); 	\addtolength{\abscisse}{3\pas}
\draw [->] (f) .. controls (\abscisse, \ordonnee) .. (h.north);		\addtolength{\abscisse}{3\pas}
\draw [->] (i) .. controls (\abscisse, \ordonnee) .. (k.north);		\addtolength{\abscisse}{3\pas}
\draw [->] (l) .. controls (\abscisse, \ordonnee) .. (n.north);		\addtolength{\abscisse}{3\pas}

\end{tikzpicture}
\end{center}
\caption{A frieze of type $\widetilde{\mathbb{E}}_6$}\label{friseE6}
\end{figure}

\subsection{Quiver mutations}

We consider a finite set $V$ and the ring of {\em Laurent polynomials} $L =\mathbb Z[v^{\pm 1}, v\in V]$, and the field of rational function $F=\mathbb Q(V)$. Note that $L$ is a factorial ring. Call {\em basis} a family $X=(x_v)_{v\in V}$ of rational functions which freely generates the field $F$; that is, $v\mapsto x_v$ is an automorphism of $F$. An example of basis is $V$ itself.

The quivers we consider here are without loops and without 2-cycles; they may have multiple arrows and are not assumed to be acyclic as in Subsection \ref{numfr}. Consider an {\em initial quiver} $Q_0$ with vertex set $V$, and label each vertex $v$ of it by $v$ itself, considered as an element of the field $F=K(V)$. We consider a quiver $Q$ with same vertex set $V$, with each $v\in V$ labelled by $x_v$, where $X$ is a basis as defined above; given some vertex $u$ of $Q$, we define the {\em mutation of} $Q$ {\em at} $u$, denoted $\mu_u$: it defines a new quiver $Q'$, with same vertex set $V$, with basis $Y$ such that $y_v=x_v$ if $v\neq u$ and with $y_ux_u=\prod_{v\rightarrow u}x_v +\prod_{u\rightarrow v} x_v$, where the arrows are of course taken with multiplicities. Moreover $Q'$ is obtained from $Q$ as follows:
\begin{enumerate}
\item for each arrows $v\rightarrow u\rightarrow w$, add an arrow $u\rightarrow w$ (transitive closure at $u$);
\item reverse each arrow incident at $u$ (reversion);
\item remove each pair of opposite arrows at $u$ until no such pair exists (removal of 2-cycles); 
\end{enumerate}

The polynomial $\prod_{v\rightarrow u}v +\prod_{u\rightarrow v} v \in {\mathbb Z}[V]$ is called the {\em mutation polynomial of $Q$ at} $u$; note that this polynomial does not depend on $u$. We denote $Q'=\mu_u(Q)$. One verifies that the mutation polynomial of $Q$ at $u$ is equal to the mutation polynomial of $\mu_u(Q)$ at $u$.

Note that $Q'$ is without loops, since $Q$ has no loops nor 2-cycles. Moreover, $Q'$ has no 2-cycles by construction. 

\begin{proposition}\label{mutprop}
(i) (involution) Mutation at $u$ is an involution.

(ii) (commutation) If $u,w$ are two vertices in $Q$ without arrow between them, then $\mu_u\mu_w(Q)=\mu_w\mu_u(Q)$.

(iii) (braid) Suppose that $u,w$ are vertices of $Q$. Let $A$ (resp. $B,C$) be the mutation polynomial of $Q$ at $u$ (resp. of $\mu_u(Q)$ at $w$, resp. of $\mu_w\mu_u(Q)$ at $u$). If there is no arrow in $Q$ between $u$ and $w$, then $A=C$ and they do not depend on $w$. If there is an arrow, then $C=mA\mid_{w\leftarrow B_0/w}$ for some Laurent monomial $m$, not depending on $u$, where $B_0$ is the monomial $B_0=B\mid_{u\leftarrow 0}$. 
\end{proposition}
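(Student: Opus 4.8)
The plan is to encode each quiver by a skew-symmetric \emph{exchange matrix} and to reduce all three assertions to identities between single columns of this matrix. To a quiver $Q$ on $V$ without loops or $2$-cycles I attach $B=(b_{vw})_{v,w\in V}$, where $b_{vw}$ is the number of arrows $v\to w$ minus the number of arrows $w\to v$; since $Q$ has no $2$-cycles, $B$ is skew-symmetric and records the arrows faithfully (if $b_{vw}>0$ there are $b_{vw}$ arrows $v\to w$). A routine check shows that the three-step rule defining $\mu_u$ coincides with the Fomin--Zelevinsky matrix mutation
\[
\mu_u(B)_{vw}=\begin{cases}-b_{vw}&\text{if }u\in\{v,w\},\\ b_{vw}+\mathrm{sgn}(b_{vu})\,[\,b_{vu}b_{uw}\,]_+&\text{otherwise,}\end{cases}
\]
where $[x]_+=\max(x,0)$. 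Moreover the mutation polynomial at $u$ is the binomial read off column $u$, namely $\prod_v v^{[b_{vu}]_+}+\prod_v v^{[-b_{vu}]_+}$, its first (resp.\ second) monomial being the product of the sources of the arrows into $u$ (resp.\ of the targets of the arrows out of $u$). Thus each assertion is governed by one or two columns of $B$.

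Then (i) and (ii) are short computations. For the involution, the entries with $u\in\{v,w\}$ are negated twice, while for $v,w\neq u$ the passage to $\mu_u(B)$ replaces $b_{vu},b_{uw}$ by their negatives, which fixes the product $b_{vu}b_{uw}$ but reverses $\mathrm{sgn}(b_{vu})$; hence the correction added by the first mutation is removed by the second and $\mu_u\mu_u(B)=B$. At the level of bases, applying the exchange relation $y_ux_u=\prod_{v\to u}x_v+\prod_{u\to v}x_v$ twice returns $x_u$, because reversal interchanges the two products. For commutation the hypothesis reads $b_{uw}=0$; every cross term produced by $\mu_u$ that could feed into $\mu_w$ carries a factor $[\pm b_{uw}]_+$ or $[\pm b_{wu}]_+$, all of which vanish, so $\mu_u$ does not alter column $w$ away from the entry $b_{uw}$ and conversely. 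Consequently, for $v_1,v_2\notin\{u,w\}$ both orders give $b_{v_1v_2}+\mathrm{sgn}(b_{v_1u})[b_{v_1u}b_{uv_2}]_+ +\mathrm{sgn}(b_{v_1w})[b_{v_1w}b_{wv_2}]_+$, the entries meeting $u$ or $w$ agree likewise, and the exchange relations at $u$ and $w$ involve disjoint products, so they commute.

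For (iii) I would separate two cases. If there is no arrow between $u$ and $w$ then $b_{uw}=0$, and this remains $0$ after $\mu_u$, so $u$ is never a neighbour of $w$ in $\mu_u(Q)$; hence $\mu_w$ leaves column $u$ untouched, and column $u$ of $\mu_w\mu_u(B)$ equals column $u$ of $\mu_u(B)$, i.e.\ column $u$ of $B$ with every sign reversed. Reversal merely swaps the two monomials of the binomial, so $C=A$, and since $w$ is absent from column $u$ this common polynomial does not involve $w$. If there is an arrow, say $b_{uw}=c>0$ (the case $c<0$ being symmetric), I compute the three relevant columns in turn. Column $u$ of $B$ gives $A=I_u+w^{c}O'_u$ with $I_u=\prod_{b_{vu}>0}v^{b_{vu}}$ and $O'_u=\prod_{v\neq w,\,b_{vu}<0}v^{-b_{vu}}$. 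Column $w$ of $\mu_u(B)$ has entries $b'_{uw}=-c$ and $b'_{vw}=b_{vw}+c\,[b_{vu}]_+$ for $v\neq u,w$, so $B$ has the $u$-free monomial $B_0=\prod_{b'_{vw}>0}v^{b'_{vw}}$ and a second monomial divisible by $u^{c}$, confirming that $B\mid_{u\leftarrow0}=B_0$ is indeed a monomial. Finally column $u$ of $\mu_w\mu_u(B)$ has $b''_{wu}=-c$ and $b''_{vu}=-b_{vu}+c\,[b'_{vw}]_+$, so $C=C_++C_-$ with $C_+$ its $w$-free monomial and $w^{c}\mid C_-$.

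It remains to verify $C=m\,A\mid_{w\leftarrow B_0/w}$, where $A\mid_{w\leftarrow B_0/w}=I_u+B_0^{\,c}w^{-c}O'_u$. Writing $m=w^{c}m'$ with $m'=\prod_{v\neq u,w}v^{\mu_v}$, the claim splits into the two monomial equalities $C_-=m\,I_u$ and $C_+=m\,B_0^{\,c}w^{-c}O'_u$ (note it is $C_-$, not $C_+$, that pairs with $I_u$, since reversal at $u$ has interchanged the monomials). Setting $\beta=b_{vu}$ and $\gamma=b'_{vw}$ for a fixed $v\notin\{u,w\}$, these read $[\beta-c[\gamma]_+]_+=\mu_v+[\beta]_+$ and $[-\beta+c[\gamma]_+]_+=\mu_v+c[\gamma]_++[-\beta]_+$; their difference is the identity $[X]_+-[-X]_+=X$ with $X=-\beta+c[\gamma]_+$, so the second equation follows from the first, which I take as the definition of $\mu_v$. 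As the variable $u$ occurs in none of the $\mu_v$, the monomial $m$ does not depend on $u$, as required. The main obstacle is exactly this last step: keeping the book-keeping of the $[\cdot]_+$ functions straight across two successive mutations, pairing the correct monomials (reversal forces the crossed matching), and checking that the single free exponent $\mu_v$ satisfies both equations at once — which the elementary identity $[X]_+-[-X]_+=X$ guarantees.
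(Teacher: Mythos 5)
Your proposal is correct and follows essentially the same route as the paper: both encode the quiver by the skew-symmetric matrix with the mutation rule $m'_{vw}=m_{vw}+\mathrm{sgn}(m_{vu})\max(m_{vu}m_{uw},0)$, prove (i) and (ii) by direct entry computations, and reduce the braid identity in (iii) to matching the two monomials of $C$ against those of $A\mid_{w\leftarrow B_0/w}$ up to a Laurent monomial $m$ free of the variable $u$. The only cosmetic differences are that you handle the no-arrow case of (iii) by a direct column computation where the paper invokes (ii) together with the invariance of the mutation polynomial under mutation at $u$, and that you absorb the removal of $2$-cycles into the $[\cdot]_+$ exponent bookkeeping (checked per variable via $[X]_+-[-X]_+=X$) where the paper passes through the pre-cancellation polynomial $C'$ and then divides by a monomial.
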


Each quiver $Q$ is completely determined by the antisymmetric matrix $M\in{\mathbb Z}^{V\times V}$ defined by: $m_{uv}$ is the number of arrows $u\rightarrow v$ minus the number of arrows $v\rightarrow u$; note that at most one of these two numbers is nonzero, since $Q$ has no 2-cycles. If $Q'$ is obtained from $Q$ by mutation at $u$, then its matrix $M'$ satisfies, when $v\neq w$:
$m'_{vw}=-m_{vw}$ if $v=u$ or $w=u$ and 
$m'_{vw}=m_{vw}+sgn(m_{vu})max(m_{vu}m_{uw},0)$ if $v\neq u$ and $w\neq u$. We leave the verification of this fact to the reader.

\begin{proof}
(i) If, with the notations above, we apply a second time the mutation at $u$, we obtain a matrix $M''$. Suppose that $v=u$ or $w=u$; then $m''_{vw}=-m'_{vw}=m_{vw}$. If $v\neq u$ and $w\neq u$, then $$m''_{vw}=m'_{vw}+sgn(m'_{vu})max(m'_{vu}m'_{uw},0)=m_{vw}+sgn(m_{vu})max(m_{vu}m_{uw},0)$$$$+sgn(-m_{vu})max((-m_{vu})(-m_{uw}),0)=m_{vw}.$$ Thus $M''=M$ and the mutation is an involutive.

(ii) Suppose now that $u,w$ are not linked by an arrow in the quiver $Q$. Then, take the previous notation $M,M'$; moreover, define the matrices $M''$, $P'$ and $P''$: $M''$ is obtained from $M'$ by a mutation at $w$, $P'$ is obtained from $M$ by a mutation at $w$ and $P''$ is obtained from $P'$ by a mutation at $u$. We verify that $P''=M''$, which will prove the commutation.

Since $u,w$ are unconnected in $Q$, we have $m_{uw}=0=m_{wu}$, thus also $m'_{uw}=0=m'_{wu}$. The same relations holds for $M'',P'$ and $P''$.

We show first that $m''_{uv}=p''_{uv}$. If $v=w$, these numbers are 0, so we may assume that $v\neq w$. We have (mutation at $u$): $m'_{uv}=-m_{uv}$ and $p''_{uv}=-p'_{uv}$. Moreover, since $u\neq w$ and $v \neq w$, mutating at $w$, we have $m''_{uv}=m'_{uv}+sgn(m'_{uw})max(m'_{uw}m'_{wv},0)=m'_{uv}=-m_{uv}$. Similarly, $p'_{uv}=m_{uv}+sgn(m_{uw})max(m_{uw}m_{wv},0)=m_{uv}$. Thus $m''_{uv}=-m_{uv}=-p'_{uv}=p''_{uv}$.

The proof that $m''_{vu}=p''_{vu}$, $m''_{wv}=p''_{wv}$ and $m''_{vw}=p''_{vw}$ are similar.

We assume now that $v,t\neq u,w$ and show that $m''_{tv}=p''_{tv}$. We have (mutation at $u$): $m'_{tv}=m_{tv}+sgn(m_{tu})max(m_{tu}m_{uv},0)$ and $p''_{tv}=p'_{tv}+sgn(p'_{tu})max(p'_{tu}p'_{uv},0)$. Moreover, we have (mutation at $w$): $m''_{tv}=m'_{tv}+sgn(m'_{tw})max(m'_{tw}m'_{wv},0)$ and $p'_{tv}=m_{tv}+sgn(m_{tw})max(m_{tw}m_{wv},0)$. We have also (mutation at $w$): $p'_{tu}=m_{tu}+sgn(m_{tw})max(m_{tw}m_{wu},0)=m_{tu}$ since $m_{wu}=0$. Similarly, $p'_{uv}=m_{uv}$, $m'_{tw}=m_{tw}$ and $m'_{wv}=m_{wv}$. Thus $m''_{tv}=m_{tv}+sgn(m_{tu})max(m_{tu}m_{uv},0)+sgn(m_{tw})max(m_{tw}m_{wv},0)$ and $p''_{tv}=m_{tv}+sgn(m_{tw})max(m_{tw}m_{wv},0)+sgn(m_{tu})max(m_{tu}m_{uv},0)$. Thus $m''_{tv}=p''_{tv}$.

(iii) We prove it first in the case where there is no arrow between $u$ and $w$ in $Q$. Then $A$ is independent of $w$.
Moreover $C$ is the mutation polynomial of $\mu_w\mu_u(Q)$ at $u$, so that, by a previous remark, it is equal to that of $\mu_u\mu_w\mu_u(Q)$ at $u$. But by (ii), this is the mutation polynomial of $\mu_w(Q)$ at $u$; since the mutation at $w$ does not change the arrows incident to $u$ (because $u$ and $w$ are not incident in $Q$), this polynomial is also equal to the mutation polynomial of $Q$ at $u$. Thus we have $C=A$ and (iii) holds in this case.

Suppose now that there is an arrow between $u$ and $w$ in $Q$. We may by symmetry assume that there is an arrow $u\rightarrow w$ with multiplicity $a$ in $\mu_u(Q)$. Denote by $M$ the antisymmetric matrix associated to this quiver. In the next formulas, $v$ is always assumed to be $\neq u,w$. We have (since $A$ is also the mutation polynomial of $\mu_u(Q)$ at $u$)
$$A=\prod_v v^{m_{vu}}+w^a\prod_v v^{m_{uv}}.$$
Now, since $m_{uw}=a>0$, $B$ depends on $u$ and therefore $B_0$ is equal to 
$B_0=\prod_v v^{m_{wv}}$.
Now, by definition of the mutation of $\mu_u(Q)$ at $w$, we see that $C$ is equal to $C'$ divided by some monomial, with
$$C'=w^a\prod_v v^{m_{vu}}+\prod_v v^{m_{uw}+am_{wv}}.$$
Indeed, this corresponds to apply the first two steps of the mutation at $w$, without the removal of 2-cycles; then the removal of 2-cycles amounts to divide the mutation polynomial by some monomial.
Now 
$$A\mid_{w\leftarrow B_0/w}=\prod_v v^{m_{vu}}+\frac{\prod_v v^{am_{wv}}}{w^a}\prod_v v^{m_{uw}}.$$
Thus $C'=w^a\times A\mid_{w\leftarrow B_0/w}$, which ends the proof.
\end{proof}

\begin{theorem}\label{LaurentQuiver}
Each element of the basis of each quiver obtained from $Q_0$ by a sequence of mutations is a Laurent polynomials in the variables $v\in V$.
\end{theorem}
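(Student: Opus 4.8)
The statement is the \emph{Laurent phenomenon} of Fomin and Zelevinsky, and the plan is to prove it by induction on the number $N$ of mutations in a sequence $\mu_{u_N}\circ\cdots\circ\mu_{u_1}$ carrying $Q_0$ to the given quiver. The base cases are immediate: for $N=0$ the basis is $V$, and for $N=1$ the single new variable equals $(\prod_{v\rightarrow u}v+\prod_{u\rightarrow v}v)/u$, which lies in $L$ because, $Q_0$ having no loops, neither product involves $u$. For the inductive step, note that a mutation changes exactly one basis element; all the others are shared with the seed reached after $N-1$ mutations and are Laurent by the induction hypothesis. Hence it suffices to prove that the single variable $y=y_{u_N}$ created by the final mutation is a Laurent polynomial in $V$.

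To cut down the number of configurations I would use Proposition \ref{mutprop}. Write $u=u_N$, $w=u_{N-1}$, and let $Q''$ be the seed obtained after $N-2$ mutations, so that $Q''\xrightarrow{\mu_w}Q'\xrightarrow{\mu_u}Q$. If $w=u$, the involution property (i) makes the last two mutations cancel, so $y$ already occurs as a variable of $Q''$ and is Laurent by induction. If $w\neq u$ and there is no arrow between $u$ and $w$ in $Q''$, the commutation property (ii) lets me swap the last two mutations; after the swap $y$ is produced by mutating $Q''$ at $u$ and is then left untouched by the mutation at $w$, so $y$ is a variable of the seed $\mu_u(Q'')$, reached in $N-1$ mutations, and is Laurent by induction. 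The one remaining configuration is $w\neq u$ with an arrow between $u$ and $w$ in $Q''$; this is the crux, and it is exactly the situation governed by the braid relation (iii).

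In this case $y\,x_u=P$, where $x_u$ is the (unchanged) $u$-variable of $Q''$ and $P=\prod_{v\rightarrow u}x_v+\prod_{u\rightarrow v}x_v$ is evaluated on the cluster of $Q'$; both $P$ and $x_u$ are already known to be Laurent, so $y=P/x_u\in F$, and the whole problem is to show that this quotient stays in $L$. Here I would exploit the fact, recorded in the text, that $L$ is factorial: it is enough to check that every irreducible factor of $x_u$ divides $P$ to at least the same order. The braid relation (iii) is precisely the device that makes this divisibility visible, since it writes the mutation polynomial $C$ producing $y$ as $m\,A\mid_{w\leftarrow B_0/w}$, that is, as a Laurent-monomial substitution applied to the earlier, already-understood mutation polynomial $A$ (here $B_0$ is a single monomial). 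To turn this into a clean induction one must carry a strengthened hypothesis asserting suitable coprimality among the reduced numerators of the cluster variables --- in particular that the numerator of $x_u$ is coprime to the relevant factors of $P$ --- and verify that this coprimality is preserved under the braid move. This package is the \emph{caterpillar lemma} of Fomin and Zelevinsky.

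The main obstacle is entirely this adjacent-vertex case: setting up the correct coprimality invariant and proving that it propagates across the three-mutation braid move, which is where the braid relation (iii) and the unique factorization in $L$ do the real work. By contrast the reductions via (i) and (ii), and the bookkeeping that only one basis element changes per mutation, are routine.
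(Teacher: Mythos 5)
Your reductions via Proposition \ref{mutprop} (i) and (ii) are sound, but at the crux --- the last two mutations performed at adjacent vertices --- your argument stops exactly where the work begins. You correctly observe that $y=P/x_u$ with both $P$ and $x_u$ Laurent by induction, that factoriality of $L$ reduces everything to a divisibility statement, and that one needs ``a strengthened hypothesis asserting suitable coprimality among the reduced numerators of the cluster variables'' which propagates along the mutation path; but you neither formulate this invariant nor prove that it is preserved, deferring it wholesale to the ``caterpillar lemma'' of Fomin and Zelevinsky. That deferred package \emph{is} the content of the theorem, so the adjacent case remains unproved. Note also that you cannot close the gap by simply citing Proposition \ref{braid}: that proposition, and its proof via the explicit formulas $x_u=A/u$, $y_w=B(A/u)/w$ and Lemma \ref{relprime}, concerns three mutations performed \emph{from the initial seed} $Q_0$, where every gcd can be computed by hand in $L$. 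In your induction, the seed $Q''$ preceding the last two mutations carries an arbitrary cluster of complicated Laurent polynomials, for which no such explicit coprimality control is available.

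The paper's proof sidesteps this difficulty by inverting the induction: it inducts on the distance from $Q_0$ to $Q$ in the mutation graph and peels mutations off the \emph{near} end rather than the far end. Writing $u,w$ for the first two mutation vertices on a shortest path, it first arranges that the mutation polynomials $A$ and $B_0$ are coprime --- automatic when $u,w$ are adjacent, since $B_0$ is then a monomial by Proposition \ref{mutprop} (iii), and achieved otherwise by adjoining an auxiliary vertex $h\rightarrow u$ and invoking Lemma \ref{relprime}, with Laurentness over $V\cup\{h\}$ specializing at $h=1$ to Laurentness over $V$. Proposition \ref{braid} then applies to $Q_0,Q_1,Q_2,Q'_3=\mu_u(Q_2)$ and delivers both the Laurentness of $x_u,y_w,z_u$ and the conditions $\gcd(x_u,y_w)=1=\gcd(x_u,z_u)$ in $L$. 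Since $Q_1$ and $Q'_3$ are strictly closer to $Q$ than $Q_0$ is, the induction hypothesis re-rooted at these two seeds expresses each basis element $p$ of $Q$ as Laurent in the $X$-cluster and in the $Z$-cluster, i.e. $p\in L/x_u^i$ and $p\in L/z_u^jz_w^k$; the gcd conditions (with $z_w=y_w$) then force $p\in L$. In this scheme the coprimality is only ever needed at the root, where it is verifiable --- precisely the feature your last-mutation induction lacks, and without which your plan cannot be completed as stated.
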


This theorem will be proved in the next subsection.

\begin{corollary}
Each frieze is integer-valued.
\end{corollary}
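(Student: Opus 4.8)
The plan is to realize the frieze as the specialization at $v=1$ (for all $v\in V$) of a well-chosen infinite sequence of quiver mutations, and then to invoke the Laurent phenomenon of Theorem \ref{LaurentQuiver}. Since $Q$ is acyclic, I would fix a topological ordering $v_1,\dots,v_N$ of its vertices, so that every arrow of $Q$ goes from a smaller to a larger index, and consider the composite mutation $\mu=\mu_{v_N}\circ\cdots\circ\mu_{v_1}$ starting from the initial quiver $Q_0=Q$ with basis $X=(v)_{v\in V}$.

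The key combinatorial point is to check, by induction on $i$, that at the moment $v_i$ is mutated in a round it has become a \emph{source} of the current quiver: the arrows of $Q$ entering $v_i$ have already been reversed by the earlier mutations $\mu_{v_1},\dots,\mu_{v_{i-1}}$, while the arrows of $Q$ leaving $v_i$ are still present. Because $v_i$ is a source, the transitive-closure step adds no arrows and the incoming product of the exchange relation is empty, so the mutation at $v_i$ reads
$$x_{v_i}'\,x_{v_i}=1+\prod_{w\to v_i}x_w\ \prod_{v_i\to w}x_w,$$
where the arrows are those of $Q$, each $x_w$ in the first product carrying its already-updated value and each $x_w$ in the second its value from the previous round. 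The same source-mutation analysis shows that each arrow is reversed exactly twice during one round, so that $\mu(Q)=Q$ and the identical rule governs every round.

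I would then denote by $X(v,n)\in F$ the label of $v$ after $n$ rounds of $\mu$, with $X(v,0)=v$, and set $b(v,n)=X(v,n)\mid_{v\leftarrow 1}$. Specializing every initial variable to $1$ in the displayed exchange relation turns it into exactly the frieze recursion (\ref{friserecursion}), and $b(v,0)=1$. Since an acyclic quiver determines the frieze uniquely, the sequence $b(v,n)$ coincides with $a(v,n)$ for all $v$ and $n$.

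Finally, by Theorem \ref{LaurentQuiver} each $X(v,n)$ lies in $L=\mathbb Z[v^{\pm1},\,v\in V]$, that is, is a Laurent polynomial with integer coefficients; evaluating it at $v=1$ for all $v$ returns the sum of its coefficients, an integer. Hence $a(v,n)=X(v,n)\mid_{v\leftarrow 1}\in\mathbb Z$, which is the assertion of the corollary. I expect the main obstacle to be precisely the combinatorial dictionary of the second and third paragraphs, namely the verification that mutating in topological order keeps every mutated vertex a source and restores $Q$ after a full round, together with the matching of the ``new/old'' neighbour values; once this identification is in place, integrality is an immediate consequence of the Laurent phenomenon.
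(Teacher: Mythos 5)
Your proposal is correct and follows essentially the same route as the paper: replace the initial values $a(v,0)=1$ by the variables $v$, observe that the frieze recursion (\ref{friserecursion}) is realized by mutating only at vertices that are sources of the current quiver, invoke Theorem \ref{LaurentQuiver}, and specialize all variables to $1$. Your second and third paragraphs simply carry out in detail the verification that the paper leaves to the reader with the phrase ``one verifies that computing the sequences using the recursion amounts to mutate only at vertices which are sources of the quiver,'' and your verification (topological order keeps each mutated vertex a source, every arrow is reversed twice per round so $\mu(Q)=Q$) is accurate.
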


\begin{proof}
Replace each initial value $a(v,0)=1$ by the variable $v$. It is enough to show that the sequences $a(v,n)$ are Laurent polynomials, since one may recover the frieze by specializing these variables to 1. Now, one verifies that computing the sequences using the recursion (\ref{friserecursion}) amounts to mutate only of vertices which are sources of the quiver. Hence, the corollary is a particular case of Theorem \ref{LaurentQuiver}.
\end{proof}

\subsection{The Laurent phenomenon for quiver mutation}

Consider the {\em mutation graph} of $Q_0$: its vertices are the quivers $Q$ obtained by a sequence of mutations from $Q_0$, 
with an edge $(Q,Q')$ labelled $v$ if there is mutation at $v$ between $Q$ and $Q'$. We prove the theorem by induction on the distance from $Q_0$ to $Q$ in this graph. We start by considering a special case of distance 3. 

We shall need an easy lemma.

\begin{lemma} \label{relprime}
Let $R$ be a factorial ring and $u$ a variable. Let $au+b$ be a polynomial of degree $1$ in $R[u]$ and $P\in R$ be such that $a$ and $P$ are relatively prime in $R$. Then $au+b$ and $P$ are relatively prime in $R[u^{\pm 1}]$.
\end{lemma}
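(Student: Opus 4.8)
The plan is to work inside the Laurent polynomial ring $R[u^{\pm 1}]$, which I first observe is again factorial: by Gauss's lemma $R[u]$ is a UFD, and $R[u^{\pm 1}]=R[u][u^{-1}]$ is a localization of $R[u]$ at the multiplicative set generated by $u$, so it is a UFD as well (a localization of a UFD is a UFD). In a factorial ring two elements are relatively prime precisely when they share no common prime (irreducible) factor, so I would argue by contradiction, assuming that some prime $q$ of $R[u^{\pm 1}]$ divides both $au+b$ and $P$.

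The first step is to pin down the prime factors of $P$. Since $P\in R$, its factorization into primes already takes place in $R$, and each prime $p$ of $R$ remains prime in $R[u^{\pm 1}]$: indeed $R[u^{\pm 1}]/(p)\cong (R/(p))[u^{\pm 1}]$ is a domain because $R/(p)$ is. Consequently the prime factors of $P$ in $R[u^{\pm 1}]$ are exactly the associates of its prime factors in $R$, and in particular the hypothetical common factor $q$ must be associate to some prime $p\in R$ dividing $P$.

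The crux is then to rule out $p\mid au+b$. Here I use that $R[u^{\pm 1}]$ is free as an $R$-module on the basis $\{u^n\}_{n\in\mathbb Z}$, so multiplication by the constant $p$ acts coefficientwise; hence $p\mid au+b$ forces $p$ to divide every coefficient, and in particular the coefficient of $u^1$, namely $a$. Thus $p\mid a$ in $R$. But $p\mid P$ as well, so $p$ is a common prime factor of $a$ and $P$, contradicting the hypothesis that $a$ and $P$ are relatively prime in $R$. This contradiction shows that $au+b$ and $P$ have no common prime factor, i.e.\ they are relatively prime in $R[u^{\pm 1}]$.

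The only genuinely delicate point is the passage from $R[u]$ to the Laurent ring $R[u^{\pm 1}]$: one must check that inverting $u$ neither manufactures new common factors nor destroys the primality of the $p\in R$. Both are handled by the two standard facts invoked above (localizations of UFDs are UFDs, and $R[u^{\pm 1}]/(p)$ is a domain), after which the whole argument collapses to the elementary coefficient computation $p\mid au+b\Rightarrow p\mid a$.
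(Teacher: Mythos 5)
Your proof is correct, and it reaches the same contradiction as the paper's, but it gets there along a slightly different route that is worth noting. The paper picks an irreducible common divisor $p$ of $au+b$ and $P$ in $R[u^{\pm 1}]$, multiplies by a suitable power of $u$ to normalize $p$ into $R[u]$ with nonzero constant term, and then case-splits on its degree: dividing $au+b$ forces degree $0$ or $1$; degree $0$ makes $p$ divide $a$ (and $b$) as well as $P$, contradicting coprimality, while degree $1$ is impossible because $p$ divides the constant $P$. You instead attack from the side of $P$: since every prime of $R$ remains prime in $R[u^{\pm 1}]$ (the quotient $R[u^{\pm 1}]/(p)\cong (R/(p))[u^{\pm 1}]$ is a domain), unique factorization forces any common prime divisor to be an associate of a prime $p\in R$ dividing $P$, after which the coefficientwise divisibility $p\mid au+b \Rightarrow p\mid a$ yields the same contradiction. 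Your version buys two things the paper leaves tacit: an explicit justification that $R[u^{\pm 1}]$ is factorial (as a localization of the UFD $R[u]$), and a clean replacement for the normalization-plus-degree-count step, whose "degree $0$ or $1$" claim in the paper actually needs the nonzero-constant-term bookkeeping you avoid entirely. Conversely, the paper's argument stays within elementary degree reasoning and does not invoke the persistence of primality under the base extension $R\subset R[u^{\pm 1}]$. Both proofs implicitly assume $P\neq 0$, which is harmless in the intended application to mutation polynomials.
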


\begin{proof}
Let $p\in R[u^{\pm 1}]$ be an irreducible divisor of $au+b$ and $P$. Since $u$ is invertible in $R[u^{\pm 1}]$, we may assume that $p$ is a polynomial in $R[u]$ with nonzero constant term. Since $p$ divides $au+b$ in $R[u^{\pm 1}]$, we see that $p$ is of degree 0 or 1. If $p$ is of degree 0, then $p$ divides $a$ and $b$ in $R[u]$, and also $P$,a contradiction. Now, $p$ cannot be of degree 1, since it divides $P$, which is in $R$.
\end{proof}

\begin{proposition}\label{braid}
Consider $4$ quivers $Q_0,Q_1,Q_2,Q_3$, which are linked by mutations $Q_0\relbar Q_1 \relbar Q_2 \relbar Q_3$, 
with corresponding bases $V,X,Y,Z$, with respective mutations at $u,w,u$, 
and with respective mutations polynomials $A,B,C$. We assume that $A$ and $B_0=B\mid_{u\leftarrow 0}$ are relatively prime in $L$. Then the elements of these bases are Laurent polynomials and one has: $gcd(x_u,y_w)=1=gcd(x_u,z_u)$ in $L$.
\end{proposition}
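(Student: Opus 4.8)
The plan is to track the four bases explicitly and reduce the gcd claims to a single coprimality hypothesis via the braid relation (Proposition \ref{mutprop}(iii)) and the elementary coprimality Lemma \ref{relprime}. First I would recall the defining mutation relations for the transition between successive bases. Since $Q_0\to Q_1$ is a mutation at $u$, the new variable $x_u$ satisfies
$$x_u\,v_u = A\mid_{v_u\leftarrow\text{(old value)}},$$
and more usefully $x_u = A/v_u$ where $A$ is the mutation polynomial of $Q_0$ at $u$; all other $x_v=v_v$. Likewise $y_w = B/x_w = B/v_w$ with $B$ the mutation polynomial of $Q_1$ at $w$, and $z_u = C/y_u = C/x_u$ with $C$ the mutation polynomial of $Q_2$ at $u$. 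The key structural input is that $A$ is a polynomial of degree exactly one in the variable $v_u$ (it is a sum of two monomials, exactly one of which contains $v_u$, since there are no $2$-cycles), so I may write $A = a\,v_u + b$ where $a,b\in L$ are monomials not involving $v_u$, and $a,b$ are coprime because they are distinct monomials.

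\textbf{The gcd involving $x_u$ and $y_w$.} The plan here is to observe that $x_u = (a v_u + b)/v_u = a + b v_u^{-1}$ is, up to the unit $v_u^{-1}$, the linear polynomial $a v_u + b$, whereas $y_w = B/v_w$ is built from the mutation polynomial $B$ of $Q_1$ at $w$. Since $x_u$ is (up to a unit in $L$) a degree-one polynomial in $v_u$ with leading coefficient $a$, I would apply Lemma \ref{relprime} with $R$ the localization of $L$ at all variables except $v_u$, the distinguished variable being $v_u$, and $P$ the relevant factor of $y_w$. What makes this work is that $y_w$ depends on $v_u$ only through the $u$-adjacent data at the stage $Q_1$; I must check that the leading coefficient $a$ of $x_u$ in $v_u$ is coprime to $y_w$ viewed as an element of $R$. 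This is where I would invoke the hypothesis that $A$ and $B_0 = B\mid_{u\leftarrow 0}$ are relatively prime: $B_0$ is precisely the part of $B$ with the $v_u$-dependence stripped out, so coprimality of $A$ with $B_0$ feeds directly into the coprimality of the leading coefficient $a$ (a divisor of $A$) with the reduced form of $y_w$.

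\textbf{The gcd involving $x_u$ and $z_u$.} For the second claim I would use the braid relation of Proposition \ref{mutprop}(iii): since $u,w$ are incident (otherwise the claim is immediate from commutation and involution), we have $C = m\cdot A\mid_{w\leftarrow B_0/w}$ for a Laurent monomial $m$. The point is that $z_u = C/x_u$, and substituting the braid expression for $C$ lets me compute $z_u$ as a Laurent polynomial whose relation to $x_u$ is governed by the same substitution $w\leftarrow B_0/w$ applied to $A$. Because $A = a v_u + b$ is linear in $v_u$ and $x_u$ is (up to a unit) this same linear form, the quotient $C/x_u$ is a Laurent polynomial precisely when the substituted numerator is divisible by $x_u$; I would verify this divisibility using the hypothesis $\gcd(A,B_0)=1$, which guarantees the substitution $w\leftarrow B_0/w$ does not introduce a common factor with the leading term. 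The resulting $z_u$ is then coprime to $x_u$ by the same Lemma \ref{relprime} mechanism, now applied after the braid substitution.

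\textbf{The main obstacle.} I expect the hard part to be bookkeeping the substitution $w\leftarrow B_0/w$ cleanly enough to see that $x_u$ divides the braided numerator and that the quotient remains coprime to $x_u$. The braid relation introduces a Laurent \emph{monomial} factor $m$ and a fractional substitution, so I must be careful that clearing denominators does not spuriously create or destroy common factors; the whole argument hinges on the fact that $A$ is linear of degree one in $v_u$ with coprime coefficients, together with the standing hypothesis $\gcd(A,B_0)=1$, so that Lemma \ref{relprime} applies at each stage. Once those two coprimalities are in hand, the Laurent-polynomiality of $x_u,y_w,z_u$ follows since each is a ratio of a polynomial by a variable or by a coprime linear factor, completing the proof.
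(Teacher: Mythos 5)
Your central structural premise is false, and it is the load-bearing step of the whole proposal. You write $A = a\,v_u + b$ with ``exactly one of [the two monomials] containing $v_u$.'' But the mutation polynomial of $Q_0$ at $u$ is $\prod_{v\rightarrow u}v + \prod_{u\rightarrow v}v$, a sum over the \emph{neighbours} of $u$; since the quiver has no loops, the variable $u$ occurs in neither monomial, and the paper says so explicitly (``note that this polynomial does not depend on $u$''). Consequently $x_u = A/u$ is, up to the unit $u^{-1}$, a polynomial \emph{independent} of $u$, not a linear form in $v_u$: there is no ``leading coefficient $a$'' to feed into Lemma \ref{relprime} with distinguished variable $v_u$, and both of your gcd arguments collapse as written. (A genuine degree-one structure does appear in the paper, but only in the proof of Theorem \ref{LaurentQuiver}, where an auxiliary vertex $h$ with an arrow $h\rightarrow u$ is adjoined precisely to create it; you may have conflated that device with the proposition itself.)

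The paper's actual route, which your sketch does not recover, is as follows. For $\gcd(x_u,y_w)=1$ no lemma is needed: since $y_w = B(x_u)/w$ and $B(x_u)$ is a polynomial in $x_u$ with constant term $B_0$, one has $y_w \equiv B_0/w \pmod{x_u}$, whence, $u$ and $w$ being units of $L$, $\gcd(x_u,y_w)=\gcd(A,B_0)=1$ by hypothesis --- and this is the \emph{only} place the hypothesis is used, not in the Laurentness of $z_u$ as you suggest. For $z_u$ one splits $z_u = \bigl(C(B(x_u)/w)-C(B_0/w)\bigr)/x_u + C(B_0/w)/x_u$: the first summand is a polynomial in $x_u$ by the same congruence (a difference-quotient argument), and the second equals $mA/x_u = mu$ by the braid identity of Proposition \ref{mutprop}(iii), so there is no delicate ``divisibility of the braided numerator'' to verify, contrary to what you flag as the main obstacle. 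Finally, reducing modulo $x_u$ gives $z_u \equiv C'(B_0/w)B'(0)/w + mu$, which \emph{is} linear --- in the variable $u$, with Laurent-monomial leading coefficient $m$ coprime to $A$; Lemma \ref{relprime} is applied exactly once, here, with $R$ the ring of Laurent polynomials in the variables other than $u$ and $P=A$, using precisely the independence of $A$ from $u$ that your proposal denies.
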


\begin{proof}
If $v$ is distinct from $u,w$, then the mutations of the proposition do not change the corresponding element, so that $v=x_v=y_v=z_v$. For a similar reason, since $u\neq w$, we have, $w=x_w$, $x_u=y_u$ and $y_w=z_w$. 

Before pursuing, we introduce a slight abuse of notation. We know that the polynomials $A$ and $C$ do not depend on $u$ and that $B$ does not depend on $w$. We therefore write $A(w)$ for $A$. Similarly we write $B(u)$ and $C(w)$. We also use the notation $B^x=B(x_v,v\in V)$.

We have $x_u=A/u$, so that $x_u$ is a Laurent polynomial. Moreover, $y_w=B^x/x_w=B(x_u)/w$, since $w=x_w$ and since the only variable changed in the first mutation is $x_u$; thus $y_w=B(A/u)/w$, so that $y_w$ is a Laurent polynomial, too.  

Thus, regarding Laurentness in the proposition, it remains only to show that $z_u$ is a Laurent polynomial.
We have $z_u=C^y/y_u=C(y_w)/y_u$. Thus $z_u=C(B(x_u)/w))/y_u$. Denote $B_0=B(0)$. Then, since $y_u=x_u$, $z_u=(C(B(x_u)/w)-C(B_0/w))/x_u + C(B_0/w)/x_u$. 

Now, we have $B(x_u)/w\equiv B_0/w \, \, mod. \,x_u$, since $B(x_u)$ is a polynomial in $x_u$ with constant term $B_0$. Thus $(C(B(x_u)/w)-C(B_0/w))/x_u$ is polynomial in $x_u$ and therefore a Laurent polynomial. Moreover, $C\mid_{w\leftarrow B_0/w}=mA$, for some Laurent monomial $m$, by Proposition \ref{mutprop} (iii), so that $C(B_0/w)/x_u=mA/x_u=mu$, which is a Laurent monomial. Thus, $z_u$ is a Laurent polynomial.

By the above congruence, we have $y_w \equiv B_0/w \, mod. \, x_u$. Thus, since $u$ and $w$ are invertible, $gcd(x_u,y_w)=gcd(A,B_0)=1$ in $L$, since $A,B_0$ are relatively prime by hypothesis.

Let $f(u)=C(B(u)/w)$. We have seen that $z_u=(f(x_u)-f(0))/x_u+mu$. Modulo $x_u$, we have $(f(x_u)-f(0))/x_u \equiv f'(0)=C'(B_0/w)B'(0)/w$. Thus $z_u \equiv C'(B_0/w)B'(0)/w+mu$. Note that $C'(B_0/w)B'(0)/w$ and $m$ do not depend on $u$. Now, we have $gcd(A,m)=1$, since $m$ is a Laurent monomial. Thus $gcd(A,C'(B_0/w)B'(0)/w+mu)=1$, as follows from the lemma, applied to $R$ equal to the ring of Laurent polynomials over $\mathbb Z$ in the variables different from $u$, using the fact that $A$ is independent of $u$. Thus $gcd(x_u,z_u)=1$ in $L$.
\end{proof}

\begin{proof} (of Theorem \ref{LaurentQuiver})
Let $Q$ be some quiver in the mutation class of $Q_0$, with associated basis $X$. If the distance of $Q$ to $Q_0$ in the mutation graph is at most $2$, then the result is staightforward. Suppose now that the distance is  at least three and denote respectively by $u,w$ the vertices at which are performed the two first mutations. We may assume that $u\neq w$.

Let $Q_0,Q_1,\ldots,Q_n=Q$, $n\leq 3$, 
be the successive quivers on a shortest path from $Q_0$ to $Q$ in the mutation graph, and denote by $A$, $B$ the mutations polynomials of $Q_0$ at $u$ and of $Q_1$ at $w$. 

We show first that we may assume that $A$ and $B_0$ are relatively prime. This is clear if $u,w$ are linked by an arrow in $Q_0$ since $B_0$ is then a monomial by Proposition \ref{mutprop} (iii). Suppose now that there is no
arrow in $Q_0$ between $u$ and $w$; then we add to $Q_0$ a new vertex $h$, with a arrow $h\rightarrow u$. If we apply to this new quiver $Q'_0$ the same sequence of mutations as the one applied to $Q_0$ in order to obtain $Q$, then we never mutate at $h$; call $Q'$ the last quiver of this new sequence; if we show that the 
basis elements of $Q'$ are Laurent polynomial in the variables $V\cup \{h\}$, then the basis elements of $Q$ will be Laurent polynomials in $V$, since the latter are obtained by putting $h=1$ in the former. Now, the new mutation polynomial $A'$ is of degree 1 in $h$, and $B'=B$ is independent of $h$, since there is no arrow between $h$ and $w$ in $Q'_1$ (because this is true for $Q'_0$ and since there is no arrow between $u$ and $w$ in $Q'_0$). Thus $A'$ and $B'_0$ are relatively prime in $L$ by Lemma \ref{relprime}; indeed, the coefficient of $h$ in $A'$ is a monomial in the variables in $V$, and no variable in $V$ divides both monomials whose sum is $B'=B'_0$.

Let $X,Y$ be the bases attached to $Q_1,Q_2$. Let $Q'_3$ be the quiver obtained by mutation at $u$ from $Q_2$, with its basis denoted $Z$. Then the quivers $Q_0,Q_1,Q_2,Q'_3$ satisfy the hypothesis of Proposition \ref{braid}. By induction, since the distances in the mutation graph from $Q_1$ to $Q$ and from $Q'_3$ to $Q$ are shorter than the distance from $Q_0$ to $Q$, we have that each element $p$ of the basis attached to $Q$ belongs to $\mathbb Z[x_v^{\pm 1}]$ and also to $\mathbb Z[z_v^{\pm 1}]$. This implies, since $x_v=v$ for any $v\neq u$, that in $p\in L/x_u^i$ for some natural number $i$. Similarly, $p$ is in $L/z_u^jz_w^k$. We have $z_w=y_w$ and $gcd (x_u^i,z_u^jz_w^k)=1$ by Proposition \ref{braid}. Thus $p$ is in $L$.

\end{proof}

\section{Dynkin diagrams}\label{Dd}

These diagrams are well-known in many classification theorems, including the first one (historically): the classification of simple Lie algebras by Cartan and Killing. 

The Dynkin diagrams and the extended Dynkin diagrams  are shown in Figure \ref{dynkin} and Figure \ref{extended}. Note that the index of a Dynkin diagram is its number of nodes, whereas the number of nodes of an extended Dynkin diagram is one more than its index.

\newsavebox{\An}
\savebox{\An}[8cm][b]{
\begin{picture}(8,1)(0,0)
\multiput(0,0)(1,0){3}{\cercle}
\multiput(0.05,0)(1,0){2}{\segment}
\put(2,-0.02){\makebox[2cm]{$\ldots$}}
\multiput(4,0)(1,0){2}{\cercle}
\multiput(4.05,0)(1,0){1}{\segment}
\end{picture}}

\newsavebox{\Dn}
\savebox{\Dn}[8cm][b]{
\begin{picture}(8,1)(-0.8321,0)
\multiput(0,0)(1,0){3}{\cercle}
\multiput(0.05,0)(1,0){2}{\segment}
\put(2,-0.02){\makebox[2cm]{$\ldots$}}
\multiput(4,0)(1,0){2}{\cercle}
\multiput(4.05,0)(1,0){1}{\segment}
\put(-0.8321,0.5547){\cercle}
\put(-0.8321,-0.5547){\cercle}
\put(-0.7904,0.5270){\line(3,-2){0.75}}
\put(-0.7904,-0.5270){\line(3,2){0.75}}
\end{picture}}

\newsavebox{\Esix}
\savebox{\Esix}[8cm][b]{
\begin{picture}(8,1)(0,0)
\multiput(0,0)(1,0){5}{\cercle}
\multiput(0.05,0)(1,0){4}{\segment}
\put(2,0.05){\line(0,1){0.9}}
\put(2,1){\cercle}
\end{picture}}

\newsavebox{\Esept}
\savebox{\Esept}[8cm][b]{
\begin{picture}(8,1)(0,0)
\multiput(0,0)(1,0){6}{\cercle}
\multiput(0.05,0)(1,0){5}{\segment}
\put(2,0.05){\line(0,1){0.9}}
\put(2,1){\cercle}
\end{picture}}

\newsavebox{\Ehuit}
\savebox{\Ehuit}[8cm][b]{
\begin{picture}(8,1)(0,0)
\multiput(0,0)(1,0){7}{\cercle}
\multiput(0.05,0)(1,0){6}{\segment}
\put(2,0.05){\line(0,1){0.9}}
\put(2,1){\cercle}
\end{picture}}

\begin{figure}[ht]
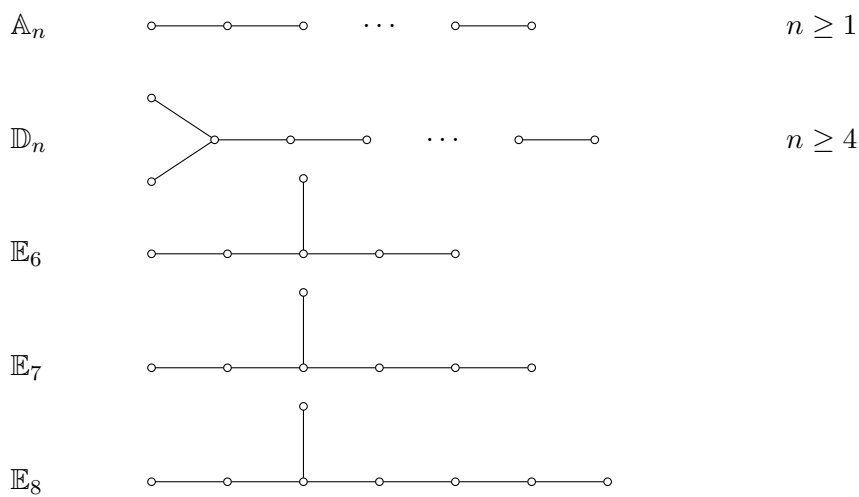

\[
\renewcommand{\arraystretch}{3}
\begin{array}{lll}
\makebox[1.5cm][l]{$\mathbb{A}_n$} & \raisebox{0.1cm}{\usebox{\An}} & n \ge 1 \\
\mathbb{D}_n & \raisebox{0.1cm}{\usebox{\Dn}} & n \ge 4 \\
\mathbb{E}_6 & \raisebox{0.1cm}{\usebox{\Esix}} \\
\mathbb{E}_7 & \raisebox{0.1cm}{\usebox{\Esept}} \\
\mathbb{E}_8 & \raisebox{0.1cm}{\usebox{\Ehuit}}
\end{array}
\]
\caption{The Dynkin Diagrams}\label{dynkin}
\end{figure}

\newsavebox{\Antilde}
\savebox{\Antilde}[2.4cm][b]{
\setlength{\unitlength}{2cm}
\begin{picture}(2.4,2.4)(-1.2,-1.2)
\put(-0.7660, -0.6428){\circle{0.05}}
\put(-1,0){\circle{0.05}}
\put(-0.7660, 0.6428){\circle{0.05}}
\put(-0.1736, 0.9848){\circle{0.05}}
\put(0.5, 0,8660){\circle{0.05}}
\put(0.5, 0,8660){\circle{0.05}}

\qbezier(-0.7787, -0.6273)(-0.9866, -0.3591)(-0.9998, -0.02000)
\qbezier(-0.9998, 0.02000)(-0.9986, 0.3244)(-0.7787, 0.6273)
\qbezier(-0.7530, 0.6579)(-0.38, 0.97)(-0.1884, 0.981)
\qbezier(-0.1589, 0.9873)(0.15, 1.06)(0.4826, 0.8759)

\put(0.939692621, -0.342020143){\circle{0.01}}
\put(0.835487811, -0.549508978){\circle{0.01}}
\put(0.686241638,	-0.727373642){\circle{0.01}}
\put(0.5, -0.866025404){\circle{0.01}}

\put(0.686241638, 0.727373642){\circle{0.01}}
\put(0.835487811, 0.549508978){\circle{0.01}}
\put(0.939692621, 0.342020143){\circle{0.01}}
\put(0.993238358, 0.116092914){\circle{0.01}}
\put(0.993238358, -0.116092914){\circle{0.01}}

\put(0.286803233, -0.957989512){\circle{0.01}}
\put(0.058144829, -0.998308158){\circle{0.01}}
\put(-0.173648178, -0.984807753){\circle{0.01}}
\put(-0.396079766, -0.918216107){\circle{0.01}}
\put(-0.597158592, -0.802123193){\circle{0.01}}
\end{picture}}

\newsavebox{\Dntilde}
\savebox{\Dntilde}[8cm][b]{
\begin{picture}(8,1)(-0.8321,0)
\multiput(0,0)(1,0){3}{\cercle}
\multiput(0.05,0)(1,0){2}{\segment}
\put(2,-0.02){\makebox[2cm]{$\ldots$}}
\multiput(4,0)(1,0){2}{\cercle}
\multiput(4.05,0)(1,0){1}{\segment}
\put(-0.8321,0.5547){\cercle}
\put(-0.8321,-0.5547){\cercle}
\put(-0.7904,0.5270){\line(3,-2){0.75}}
\put(-0.7904,-0.5270){\line(3,2){0.75}}
\put(5.8321,0.5547){\cercle}
\put(5.8321,-0.5547){\cercle}
\put(5.7904,0.5270){\line(-3,-2){0.75}}
\put(5.7904,-0.5270){\line(-3,2){0.75}}
\end{picture}}

\newsavebox{\Esixtilde}
\savebox{\Esixtilde}[8cm][b]{
\begin{picture}(8,1)(0,0)
\multiput(0,0)(1,0){5}{\cercle}
\multiput(0.05,0)(1,0){4}{\segment}
\multiput(2,0.05)(0,1){2}{\segmentvertical}
\multiput(2,1)(0,1){2}{\cercle}
\end{picture}}

\newsavebox{\Esepttilde}
\savebox{\Esepttilde}[8cm][b]{
\begin{picture}(8,1)(0,0)
\multiput(0,0)(1,0){7}{\cercle}
\multiput(0.05,0)(1,0){6}{\segment}
\put(3,0.05){\line(0,1){0.9}}
\put(3,1){\cercle}
\end{picture}}

\newsavebox{\Ehuittilde}
\savebox{\Ehuittilde}[8cm][b]{
\begin{picture}(8,1)(0,0)
\multiput(0,0)(1,0){8}{\cercle}
\multiput(0.05,0)(1,0){7}{\segment}
\put(2,0.05){\line(0,1){0.9}}
\put(2,1){\cercle}
\end{picture}}

\begin{figure}[ht]
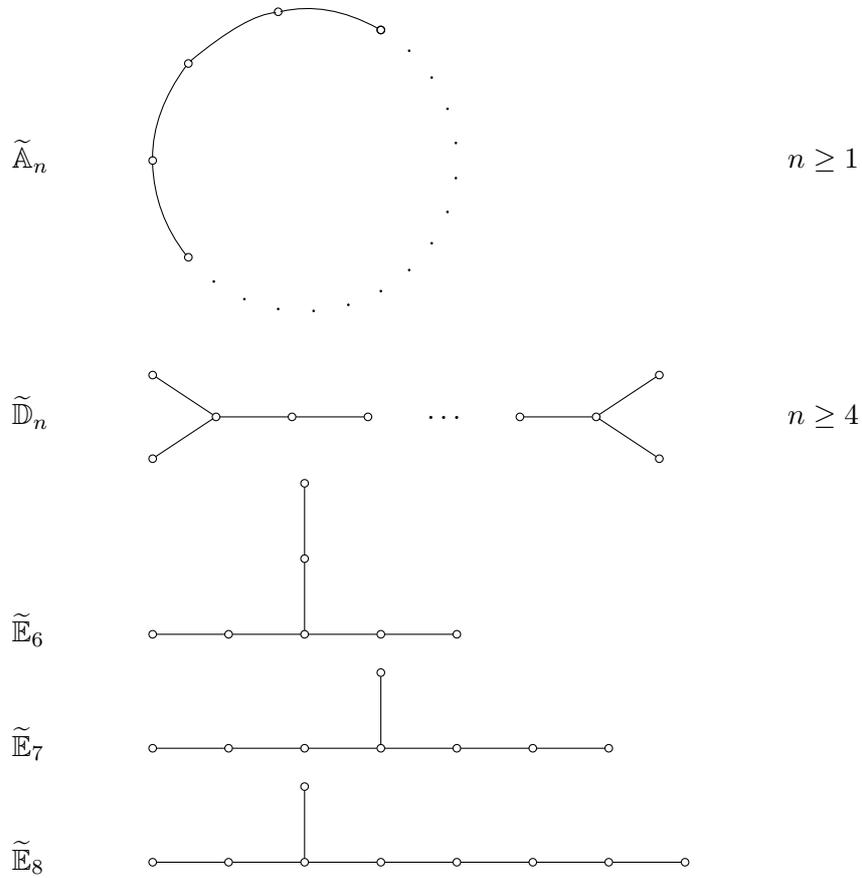

\[
\renewcommand{\arraystretch}{3}
\begin{array}{lll}
\makebox[1.5cm][l]{$\widetilde{\mathbb{A}}_n$} & \hspace*{2cm} \raisebox{-2.3cm}{\usebox{\Antilde}} & n \ge 1 \\
\widetilde{\mathbb{D}}_n & \raisebox{0.1cm}{\usebox{\Dntilde}} & n \ge 4 \\\\
\widetilde{\mathbb{E}}_6 & \raisebox{0.1cm}{\usebox{\Esixtilde}} \\
\widetilde{\mathbb{E}}_7 & \raisebox{0.1cm}{\usebox{\Esepttilde}} \\
\widetilde{\mathbb{E}}_8 & \raisebox{0.1cm}{\usebox{\Ehuittilde}}
\end{array}
\]
\caption{The extended Dynkin Diagrams}\label{extended}
\end{figure}

They have a very simple combinatorial characterization, due to Vinberg (for Dynkin diagrams) \cite{V} and Berman-Moody-Wonenburger (for extended Dynkin diagrams) \cite{BMW}. 

Let $G$ be a graph without loops. An {\em additive} (resp. {\em subadditive}) function is a function from the set $V$ of vertices of $G$ into the {\em positive real numbers} such that for any vertex $v$, $2f(v)$ is equal (resp. is greater or equal ) to$\sum_{\{v,w\}\in E}f(w)$.

\begin{theorem}\label{Dynkin}
A simple and connected graph $G$ is a Dynkin diagram (resp. an extended Dynkin diagram) if and only if it has a subbadditive function which is not additive (resp. it has an additive function).
\end{theorem}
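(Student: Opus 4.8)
The plan is to reformulate both conditions through the adjacency matrix $A$ of $G$ (symmetric, with $0/1$ entries since $G$ is simple, and irreducible since $G$ is connected), and then to combine two soft lemmas with the explicit list of diagrams. Writing a function as a positive vector $f>0$, additivity reads $Af=2f$ and subadditivity reads $Af\le 2f$ coordinatewise; being subadditive-but-not-additive means $Af\le 2f$ with strict inequality in at least one coordinate. The conceptual engine is an \emph{incompatibility} observation: a connected graph cannot carry both an additive function $g$ and a subadditive-nonadditive function $h$. Indeed, by symmetry of $A$ one has $\langle g,Ah\rangle=\langle Ag,h\rangle=2\langle g,h\rangle$; but $g>0$ at every vertex while $Ah\le 2h$ with a strict drop somewhere forces $\langle g,Ah\rangle<2\langle g,h\rangle$, a contradiction. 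I also record a \emph{monotonicity} lemma: if $f$ is subadditive on $G$ and $G'$ is a proper connected subgraph, then $f|_{G'}$ is subadditive-but-not-additive on $G'$. Passing to $G'$ only removes neighbors of each vertex, so subadditivity persists; and since $G$ is connected with $G'$ a proper nonempty subgraph, some edge of $G$ incident to $V(G')$ is absent in $G'$, producing the required strict inequality at one of its endpoints.

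With these two lemmas, everything reduces to two facts: (a) each extended Dynkin diagram admits an additive function; and (b) every connected simple graph that is \emph{not} a Dynkin diagram contains an extended Dynkin diagram as a subgraph. Granting (a) and (b), the four implications follow. A Dynkin diagram is a proper connected subgraph of its extension, so by (a) and monotonicity it carries a subadditive-nonadditive function. Conversely, a connected $G$ carrying such a function cannot be extended Dynkin (incompatibility with (a)); if it were not Dynkin it would contain an extended Dynkin $H$ by (b), and $H$ would be either all of $G$ (contradicting incompatibility) or a proper connected subgraph, which monotonicity would make finite, contradicting (a); hence $G$ is Dynkin. The two statements about additive functions are handled symmetrically: an additive $G$ is not Dynkin, so (b) yields an extended Dynkin $H\subseteq G$, and the same dichotomy forces $H=G$.

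Fact (a) is a finite verification: for each diagram one writes down the vector of \emph{marks} (the positive integer labels of the affine null vector) and checks $Af=2f$ at every vertex. For $\widetilde{\mathbb A}_n$, a cycle, the constant function $1$ works since each vertex has exactly two neighbors; for $\widetilde{\mathbb D}_n$ one assigns $1$ to the four extremal vertices and $2$ to the interior path; for $\widetilde{\mathbb E}_6,\widetilde{\mathbb E}_7,\widetilde{\mathbb E}_8$ one uses the standard marks. No completeness of the list is assumed at this stage, since (b) will establish it.

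Fact (b) is the combinatorial heart and the main obstacle; it is a case analysis on the shape of $G$. If $G$ contains a cycle, that cycle is a copy of $\widetilde{\mathbb A}$, so assume $G$ is a tree. If some vertex has degree $\ge 4$, it carries $\widetilde{\mathbb D}_4$ (the star $K_{1,4}$); so assume maximum degree $3$. If $G$ has two vertices of degree $3$, choosing two pendant edges at each and the path joining them exhibits a $\widetilde{\mathbb D}_m$; so assume exactly one branch vertex (a path is $\mathbb A_n$, hence Dynkin, which is excluded). Then $G$ is a three-armed star $T_{p,q,r}$ with arm lengths $p\ge q\ge r\ge 1$, and, not being Dynkin, its triple avoids the Dynkin patterns $(p,1,1)$, $(2,2,1)$, $(3,2,1)$, $(4,2,1)$. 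A short check on the remaining cases ($r\ge 2$; or $r=1,q\ge 3$; or $r=1,q=2,p\ge 5$) shows that $(p,q,r)$ then dominates one of $(2,2,2)$, $(3,3,1)$, $(5,2,1)$ coordinatewise, so truncating the arms produces $\widetilde{\mathbb E}_6$, $\widetilde{\mathbb E}_7$, or $\widetilde{\mathbb E}_8$ respectively. This exhaustion of tree shapes, rather than the soft lemmas above, is where the real work lies.
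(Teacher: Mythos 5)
Your proposal is correct and follows essentially the same route as the paper: your fact (b) is the paper's Proposition~\ref{containsDynki} with the same tree-shape exhaustion, your monotonicity lemma is Proposition~\ref{notadditive}, your verification (a) is the table of marks in Figure~\ref{functions}, and your incompatibility lemma is exactly the symmetric-pairing argument of Proposition~\ref{additive} (the paper pairs the subadditive row vector against the positive additive null vector of the Cartan matrix $C=2I-A$, which is your computation $\langle g,Ah\rangle=\langle Ag,h\rangle$ in different clothing). The only blemish is the phrase ``which monotonicity would make finite'' --- presumably a slip for ``would carry a subadditive-but-not-additive function, contradicting incompatibility with (a)'' --- but the intended deduction is available from your stated lemmas, so the proof stands.
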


The additive functions for extended Dynkin diagram are shown in Figure \ref{functions}. For the proof of the theorem, we shall follow \cite{HPR} (the "if" part of their proof assumes that the function is integer-valued, but it extends without change to real-valued functions).

\begin{proposition}\label{containsDynki}
Let $G$ be a finite connected graph. Then either $G$ is a Dynkin diagram or it contains an extended Dynkin diagram as subgraph.
\end{proposition}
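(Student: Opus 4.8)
The plan is to argue purely combinatorially, directly from the explicit shapes of the diagrams in Figures \ref{dynkin} and \ref{extended}, without invoking the subadditive function criterion of Theorem \ref{Dynkin}. The strategy is to peel off, one at a time, the structural features that force an extended Dynkin subgraph, thereby reducing the problem step by step to a short finite check on trees of maximum degree three having a single branch vertex.

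First I would dispose of the non-tree case. If $G$ carries a multiple edge, that pair of parallel edges is a copy of $\widetilde{\mathbb A}_1$; if $G$ is simple but not a tree, it contains a cycle, and any cycle of length $\ell\ge 3$ is a copy of $\widetilde{\mathbb A}_{\ell-1}$. Hence we may assume $G$ is a tree. Next, if some vertex of $G$ has degree at least $4$, its four incident edges form a copy of $K_{1,4}=\widetilde{\mathbb D}_4$, so we may assume every vertex has degree at most $3$. Now suppose $G$ has two distinct vertices $b_1,b_2$ of degree $3$; consider the unique path joining them. At each $b_i$ there are two further edges leaving that path, and since all degrees are $\le 3$ these four pendant edges, together with the connecting path, assemble into a tree consisting of two forks joined by a path, that is, a diagram of type $\widetilde{\mathbb D}$. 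This finishes every case except the one in which $G$ is a tree of maximum degree $\le 3$ with at most one vertex of degree $3$.

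If $G$ has no vertex of degree $3$ it is a path, hence of type $\mathbb A$, and we are done. So the only remaining case is that $G=T_{p,q,r}$, the tree obtained by attaching three paths of lengths $1\le p\le q\le r$ (counted in edges) to a common center. The heart of the proof is the finite analysis of $T_{p,q,r}$, which I expect to be the only point needing care. Here I would use the evident monotonicity that $T_{p,q,r}$ contains $T_{p',q',r'}$ as a subgraph whenever $p'\le p$, $q'\le q$, $r'\le r$, since one simply truncates each arm toward the center. Reading off Figures \ref{dynkin} and \ref{extended}, the diagrams $T_{1,1,r}$ (type $\mathbb D$), $T_{1,2,2}=\mathbb E_6$, $T_{1,2,3}=\mathbb E_7$, $T_{1,2,4}=\mathbb E_8$ are Dynkin, while $T_{2,2,2}=\widetilde{\mathbb E}_6$, $T_{1,3,3}=\widetilde{\mathbb E}_7$, $T_{1,2,5}=\widetilde{\mathbb E}_8$ are extended Dynkin.

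The case distinction then runs as follows, and is exhaustive over $1\le p\le q\le r$. If $p=q=1$, then $G$ is of type $\mathbb D$. If $p=1$ and $q=2$, then $G$ is one of $\mathbb E_6,\mathbb E_7,\mathbb E_8$ when $r\le 4$, and otherwise contains $T_{1,2,5}=\widetilde{\mathbb E}_8$. If $p=1$ and $q\ge 3$, then $r\ge q\ge 3$ and $G$ contains $T_{1,3,3}=\widetilde{\mathbb E}_7$. Finally, if $p\ge 2$, then $q,r\ge 2$ and $G$ contains $T_{2,2,2}=\widetilde{\mathbb E}_6$. In every case $G$ is itself a Dynkin diagram or contains an extended Dynkin diagram, which is exactly the assertion. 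The hard part is purely the bookkeeping: verifying that the minimal non-Dynkin single-branch trees are precisely $T_{2,2,2}$, $T_{1,3,3}$, $T_{1,2,5}$, and applying the truncation inequalities with the correct orientation.
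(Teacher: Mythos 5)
Your proof is correct and takes essentially the same route as the paper's: the identical chain of reductions (a cycle or multiple edge yields $\widetilde{\mathbb A}$, a vertex of degree $\geq 4$ yields $\widetilde{\mathbb D}_4$, two branch vertices yield $\widetilde{\mathbb D}_n$ with $n\geq 5$, and the remaining single-branch trees $T_{p,q,r}$ are checked against $\widetilde{\mathbb E}_6=T_{2,2,2}$, $\widetilde{\mathbb E}_7=T_{1,3,3}$, $\widetilde{\mathbb E}_8=T_{1,2,5}$), which is exactly the paper's case analysis on the tree of Figure \ref{graph} with arm lengths $r\le s\le t$, merely phrased as a direct case split rather than contrapositively. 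Your explicit truncation monotonicity and the bookkeeping identifying the minimal non-Dynkin trees are sound, so there is no gap.
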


We say that $G'$ is a subgraph of $G$ if the set of vertices of $G'$ is contained in that of $G$, and likewise for the set of edges.

\newcommand{\se}[1]{$\underbrace{\sllq}_{#1}$}

\newsavebox{\accf}
\savebox{\accf}[0cm]{$\left. \rule{0cm}{1.9cm} \hspace{0.3cm} \right\}$
\begin{picture}(0,3.5)(0,1.75) \put(0,0){\makebox(0,3.5){\raisebox{0.2cm}{$\scriptstyle r$}}} \end{picture}}

\begin{figure}[ht]
\begin{center}
\makebox[3.6cm]{$
\underbrace{
\begin{picture}(3.6,0.6)(-0.05,-0.3)
\put(0,0){\cercle}
\put(0.05,0){\line(1,0){0.9}}
\put(1.05,0){\makebox[1.6cm]{$\ldots$}}
\put(2.55,0){\line(1,0){0.9}}
\put(3.5,0){\cercle}
\end{picture}}_{s}$}
\unskip
\makebox[0.9cm][b]{\begin{picture}(0.9,0.6)(0,-0.3) \put(0,0){\line(1,0){0.92}} \end{picture}}
\unskip
\hspace*{-0.35cm}
\makebox[0.6cm]
{\begin{picture}(0,3.6)(0,-0.3)
\put(0,0){\cercle}
\put(0,0.05){\line(0,1){0.9}}
\put(0,1){\makebox(0,1.5){\raisebox{0.2cm}{$\vdots$}}}
\put(0,2.55){\line(0,1){0.9}}
\put(0,3.5){\cercle}
\end{picture}} \unskip
\raisebox{1.97cm}{\usebox{\accf}} \unskip
\unskip
\hspace*{-0.35cm}
\makebox[0.9cm][b]{\begin{picture}(0.9,0.6)(0,-0.3) \put(-0.02,0){\line(1,0){0.92}} \end{picture}}
\unskip
\makebox[3.6cm]{$
\underbrace{
\begin{picture}(3.6,0.6)(-0.05,-0.3)
\put(0,0){\cercle}
\put(0.05,0){\line(1,0){0.9}}
\put(1.05,0){\makebox[1.6cm]{$\ldots$}}
\put(2.55,0){\line(1,0){0.9}}
\put(3.5,0){\cercle}
\end{picture}}_{t}$}
\end{center}
\caption{}\label{graph}
\end{figure}

\begin{proof}
We show that if $G$ does not contain any extended Dynkin diagram, then it is a Dynkin diagram. The fact that $G$ does not contain $\tilde {\mathbb A}_n$ implies that $G$ is acyclic, hence is a tree (because it is connected). Since $G$ does not contain $\tilde {\mathbb D}_4$, no vertex has more than 3 neighbours. Since $G$ does not contain $\tilde {\mathbb D}_n$ for $n\geq 5$, at most one vertex in $G$ has 3 neighbours. Thus $G$ is of the form shown in Figure \ref{graph},
with $r\leq s\leq t$. Since $G$ does not contain $\tilde {\mathbb E}_6$, we must have $r\leq 1$. If $r=0$, then $G={\mathbb A}_{n}$. Suppose now that $r=1$, thus also $1\leq s$. Since $G$ does not contain $\tilde {\mathbb E}_7$, we must have $1\leq s\leq 2$. If $s=1$, then $G= {\mathbb D}_n$. Suppose now that $s=2$, hence also $2\leq t$. Since $G$ does not contain $\tilde {\mathbb E}_8$, we must have $2\leq t\leq 4$; in these 3 cases, we have $G={\mathbb E}_6$, ${\mathbb E}_7$ or ${\mathbb E}_8$.
\end{proof}

\begin{proposition}\label{additive}
Le $G$ be an extended Dynkin diagram and $f$ be a subadditive function for $G$. Then $f$ is additive.
\end{proposition}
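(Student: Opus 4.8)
The plan is to play the given subadditive function $f$ off against the \emph{specific} additive function that every extended Dynkin diagram already carries, namely the one exhibited in Figure~\ref{functions}. Call it $g$, so that $g(v)>0$ for every vertex $v$ and $2g(v)=\sum_{\{v,w\}\in E}g(w)$ everywhere. The mechanism linking $f$ and $g$ is the symmetric bilinear pairing
\[
B(\phi,\psi)=\sum_{v\in V}2\,\phi(v)\psi(v)-\sum_{(v,w)}\phi(v)\psi(w),
\]
where the second sum runs over \emph{ordered} pairs $(v,w)$ of adjacent vertices. Since adjacency is symmetric, relabelling $v\leftrightarrow w$ in that sum shows $\sum_{(v,w)}\phi(v)\psi(w)=\sum_{(v,w)}\psi(v)\phi(w)$, hence $B(\phi,\psi)=B(\psi,\phi)$; this symmetry is the heart of the matter.

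Next I would compute the pairing in both orders, recognizing in each case the neighbour sum obtained by expanding $\sum_v\phi(v)\sum_{w\sim v}\psi(w)$. Taking $\phi=g$, $\psi=f$ gives
\[
B(g,f)=\sum_{v}g(v)\Bigl(2f(v)-\sum_{\{v,w\}\in E}f(w)\Bigr),
\]
which is a sum of \emph{nonnegative} terms, because $g(v)>0$ and each bracket is $\ge 0$ by subadditivity of $f$. Taking $\phi=f$, $\psi=g$ instead gives
\[
B(f,g)=\sum_{v}f(v)\Bigl(2g(v)-\sum_{\{v,w\}\in E}g(w)\Bigr)=0,
\]
since every bracket vanishes by additivity of $g$. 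By the symmetry $B(g,f)=B(f,g)$ established above, the nonnegative sum on the first line therefore equals $0$.

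To finish, a finite sum of nonnegative terms that vanishes forces every term to vanish. As $g(v)>0$ for all $v$, this yields $2f(v)-\sum_{\{v,w\}\in E}f(w)=0$ for every vertex $v$, i.e.\ $f$ is additive, which is the assertion.

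The only delicate point, and the step I would take care over, is the bookkeeping in the pairing: one must check that $\sum_{(v,w)}\phi(v)\psi(w)$ over ordered adjacent pairs is precisely what appears on expanding $\sum_v\phi(v)\sum_{w\sim v}\psi(w)$, and that swapping $\phi$ and $\psi$ merely reindexes this double sum, so that no combinatorial factor is dropped. Beyond that the argument is entirely formal and never uses the particular shape of $G$; everything rests on the existence of a strictly positive additive function $g$, which is exactly what distinguishes the extended Dynkin diagrams (Figure~\ref{functions}).
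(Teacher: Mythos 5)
Your argument is correct and is essentially the paper's own proof written out in coordinates: your pairing $B(\phi,\psi)$ is exactly $\phi\, C\, \psi^{t}$ for the Cartan matrix $C$ of $G$, and the paper likewise exploits the symmetry of $C$ together with the strictly positive additive function of Figure~\ref{functions} to get $FCH=0$ with $FC$ componentwise one-signed and $H>0$, forcing $FC=0$. The only difference is presentational (explicit double-sum bookkeeping versus matrix notation), so nothing further is needed.
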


\begin{proof}
Let $C$ be the Cartan matrix of $G$, that is the $V\times V$ matrix with 2's on the diagonal, a $-1$ at entry $(v,w)$ if $\{v,w\}$ is an edge and 0 elsewhere. Let also $F$ denote the row vector $(f(v))_{v\in V}$. Then the fact that $f$ is subadditive means that $FC\leq 0$ componentwise. Let $h$ be an additive function for $G$: it exists, see Figure \ref{functions}. Since $C$ is symmetric, we have by additivity of $h$, $CH=0$, where $H$ is the row vector $(h(v))_{v\in V}$. Thus we have $FCH=0$. Now, the components of $H$ are positive and those of $FC$ are $\leq 0$. Thus we must have $FC=0$ and $f$ is an additive function.
\end{proof}

\begin{proposition}\label{notadditive}
Suppose that $G'$ is a subgraph of $G$. Then the restriction to $G'$ of any subadditive function $f$ of $G$ is a subbadditive function of $G$. Moreover, if $G'$ is a proper subgraph, then the restriction is not additive .
\end{proposition}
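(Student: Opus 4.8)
The plan is to work directly from the definition of (sub)additivity, comparing the two defining sums of $f$ over the edge set $E$ of $G$ and over the edge set $E'\subseteq E$ of $G'$. The two facts that drive everything are that $f$ takes strictly positive values and that $G$ is connected (the standing assumption of this section; it is genuinely needed, since for a disconnected $G$ the restriction to a component could well stay additive). First I would record the easy half, then argue the second half by contradiction, reducing an additivity assumption to a numerical equality of neighbour-sums and finally reading that equality combinatorially.

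For the first assertion, fix a vertex $v\in V'$. Since $E'\subseteq E$ and every value $f(w)$ is positive, dropping the edges of $E\setminus E'$ incident to $v$ can only decrease the neighbour-sum, so
$$\sum_{\{v,w\}\in E'} f(w)\;\le\;\sum_{\{v,w\}\in E} f(w)\;\le\;2f(v),$$
where the last inequality is subadditivity of $f$ on $G$. As $f$ restricted to $V'$ is again positive-valued, this is exactly subadditivity of the restriction on $G'$.

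For the second assertion I would assume, for contradiction, that the restriction is additive on $G'$, that is $2f(v)=\sum_{\{v,w\}\in E'}f(w)$ for every $v\in V'$. Inserting this into the chain of inequalities above forces every inequality there to be an equality, so for each $v\in V'$ one gets
$$\sum_{\{v,w\}\in E} f(w)\;=\;\sum_{\{v,w\}\in E'} f(w).$$
Because $G$ is simple and $f>0$, an equality of two such sums of positive terms, one ranging over a subset of the index set of the other, forces the two index sets to coincide: every edge of $G$ incident to $v$ must already lie in $E'$, and in particular every $G$-neighbour of $v$ must lie in $V'$. Thus $V'$ is a nonempty set of vertices closed under passage to $G$-neighbours.

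Connectivity then closes the argument: starting from any $v_0\in V'$ and following a path in $G$ to an arbitrary vertex, the closure property propagates membership in $V'$ across each successive edge, whence $V'=V$; the edge-closure property then gives $E'=E$, so $G'=G$, contradicting that $G'$ is proper. The only real content of the proof is this last step — converting the numerical equality of the two neighbour-sums into the combinatorial fact that $G'$ absorbs every edge incident to its own vertices — and the positivity of $f$ is precisely what makes that conversion valid; the rest is bookkeeping. (One should note in passing the degenerate case $V'=\emptyset$, in which additivity is vacuous; it is harmlessly excluded here, since in the intended application $G'$ is a nonempty extended Dynkin diagram.)
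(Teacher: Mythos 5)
Your proof is correct, but it follows a genuinely different route from the paper's. The paper argues by induction on deletions: it reduces to the case where $G'$ is obtained from $G$ by removing a single edge (keeping all vertices) or a single vertex, and in each case exhibits an explicit vertex $v$ of $G'$ at which $2f(v)>\sum_{\{v,w\}\in E'}f(w)$ --- an endpoint of the deleted edge in the first case, a $G$-neighbour of the deleted vertex (which exists by connectedness) in the second. You instead argue globally by contradiction: additivity of $f\mid G'$ squeezes the chain $\sum_{\{v,w\}\in E'}f(w)\le\sum_{\{v,w\}\in E}f(w)\le 2f(v)$ into equalities, positivity of $f$ converts the resulting equality of neighbour-sums into the combinatorial fact that $E'$ contains every $G$-edge meeting $V'$, and connectedness of $G$ then propagates $V'=V$ and $E'=E$, contradicting properness. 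Both arguments rest on the same two pillars (positivity of $f$ and connectedness of $G$), and both are valid. Your version buys a cleaner logical structure: it avoids the paper's induction, whose bookkeeping is left implicit (one must order the chain of one-step deletions so that the vertex witnessing strict subadditivity is not itself deleted later, a point the paper glosses over), and it makes explicit exactly where connectedness is used --- your observation that the statement genuinely fails for disconnected $G$ is correct, as is your parenthetical exclusion of the vacuous case $V'=\emptyset$, which the statement and the paper's proof both silently assume away. What the paper's version buys in exchange is locality and brevity: it produces a concrete vertex at which additivity fails, rather than a pure contradiction. Either proof serves equally well in the proof of Theorem \ref{Dynkin}.
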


\begin{proof}
Let $v$ be a vertex of $G'$. Then $$2f(v)\geq \sum_{\{v,w\}\in E}f(w) \geq \sum_{\{v,w\}\in E'}f(w),$$ so that $f$ is a subadditive function for $G'$. 

Suppose now that $G'$ is a proper subgraph. Arguing inductively, we may assume that $G'$ has the same vertices as $G$ and one edge less, or that $G'$ has one vertex less.
In the first case, let  $\{v,u\}$ be this edge; then, since the values of $f$ are positive, $2f(v)>\sum_{\{v,w\}\in E'}f(w)$ and $f\mid G'$ is not additive.
In the second case, let $u$ be this vertex; $u$ is not isolated in $G$, since $G$ is connected; let $v$ be a neighbour of $u$ in $G$; then $v$ is in $G'$, and $2f(v)>\sum_{\{v,w\}\in E'}f(w)$, so that $f$ is not additive on $G'$.
\end{proof}

\setlength{\unitlength}{1cm}

\newsavebox{\Antildeet}
\savebox{\Antildeet}[2.4cm][b]{
\setlength{\unitlength}{2cm}
\begin{tikzpicture} [>=latex, scale=2, fill opacity = 1]
\draw (0.5,0.866) arc (60:220:1); 
\node at (-0.7660, -0.6428) [fill = white] {$1$};
\node at (-1,0) [fill = white] {$1$};
\node at (-0.7660, 0.6428) [fill = white] {$1$};
\node at (-0.1736, 0.9848) [fill = white] {$1$};
\node at (0.5, 0.8660) [fill = white] {$1$};
\put(0.939692621, -0.342020143){\circle{0.01}}
\put(0.835487811, -0.549508978){\circle{0.01}}
\put(0.686241638,	-0.727373642){\circle{0.01}}
\put(0.5, -0.866025404){\circle{0.01}}
\put(0.686241638, 0.727373642){\circle{0.01}}
\put(0.835487811, 0.549508978){\circle{0.01}}
\put(0.939692621, 0.342020143){\circle{0.01}}
\put(0.993238358, 0.116092914){\circle{0.01}}
\put(0.993238358, -0.116092914){\circle{0.01}}
\put(0.286803233, -0.957989512){\circle{0.01}}
\put(0.058144829, -0.998308158){\circle{0.01}}
\put(-0.173648178, -0.984807753){\circle{0.01}}
\put(-0.396079766, -0.918216107){\circle{0.01}}
\put(-0.597158592, -0.802123193){\circle{0.01}}
\end{tikzpicture}}

\newsavebox{\Dntildeet}
\savebox{\Dntildeet}[8cm][b]{
\begin{tikzpicture} [>=latex]
\multiput(0,0)(1,0){3}{\cercle}
\multiput(0.05,0)(1,0){2}{\segment}
\put(2,-0.02){\makebox[2cm]{$\ldots$}}
\multiput(4,0)(1,0){2}{\cercle}
\multiput(4.05,0)(1,0){1}{\segment}
\put(-0.8321,0.5547){\cercle}
\put(-0.8321,-0.5547){\cercle}
\put(-0.7904,0.5270){\line(3,-2){0.75}}
\put(-0.7904,-0.5270){\line(3,2){0.75}}
\put(5.8321,0.5547){\cercle}
\put(5.8321,-0.5547){\cercle}
\put(5.7904,0.5270){\line(-3,-2){0.75}}
\put(5.7904,-0.5270){\line(-3,2){0.75}}
\node at (-0.8321,0.5547) [fill = white] {$1$};
\node at (-0.8321,-0.5547) [fill = white] {$1$};
\node at (0,0) [fill = white] {$2$};
\node at (1,0) [fill = white] {$2$};
\node at (2,0) [fill = white] {$2$};
\node at (4,0) [fill = white] {$2$};
\node at (5,0) [fill = white] {$2$};
\node at (5.8321,0.5547) [fill = white] {$1$};
\node at (5.8321,-0.5547) [fill = white] {$1$};
\end{tikzpicture}}

\newsavebox{\Esixtildeet}
\savebox{\Esixtildeet}[8cm][b]{
\begin{tikzpicture} [>=latex]
\multiput(0,0)(1,0){5}{\cercle}
\multiput(0.05,0)(1,0){4}{\segment}
\multiput(2,0.05)(0,1){2}{\segmentvertical}
\multiput(2,1)(0,1){2}{\cercle}
\node at (0,0) [fill = white] {$1$};
\node at (1,0) [fill = white] {$2$};
\node at (2,0) [fill = white] {$3$};
\node at (3,0) [fill = white] {$2$};
\node at (4,0) [fill = white] {$1$};
\node at (2,1) [fill = white] {$2$};
\node at (2,2) [fill = white] {$1$};
\end{tikzpicture}}

\newsavebox{\Esepttildeet}
\savebox{\Esepttildeet}[8cm][b]{
\begin{tikzpicture} [>=latex]
\multiput(0,0)(1,0){7}{\cercle}
\multiput(0.05,0)(1,0){6}{\segment}
\put(3,0.05){\line(0,1){0.9}}
\put(3,1){\cercle}
\node at (0,0) [fill = white] {$1$};
\node at (1,0) [fill = white] {$2$};
\node at (2,0) [fill = white] {$3$};
\node at (3,0) [fill = white] {$4$};
\node at (4,0) [fill = white] {$3$};
\node at (5,0) [fill = white] {$2$};
\node at (6,0) [fill = white] {$1$};
\node at (3,1) [fill = white] {$2$};
\end{tikzpicture}}

\newsavebox{\Ehuittildeet}
\savebox{\Ehuittildeet}[8cm][b]{
\begin{tikzpicture} [>=latex]
\multiput(0.05,0)(1,0){7}{\segment}
\put(2,0.05){\line(0,1){0.9}}
\node at (0,0) [fill = white] {$2$};
\node at (1,0) [fill = white] {$4$};
\node at (2,0) [fill = white] {$6$};
\node at (3,0) [fill = white] {$5$};
\node at (4,0) [fill = white] {$4$};
\node at (5,0) [fill = white] {$3$};
\node at (6,0) [fill = white] {$2$};
\node at (7,0) [fill = white] {$1$};
\node at (2,1) [fill = white] {$3$};
\end{tikzpicture}}

\begin{figure}[ht]
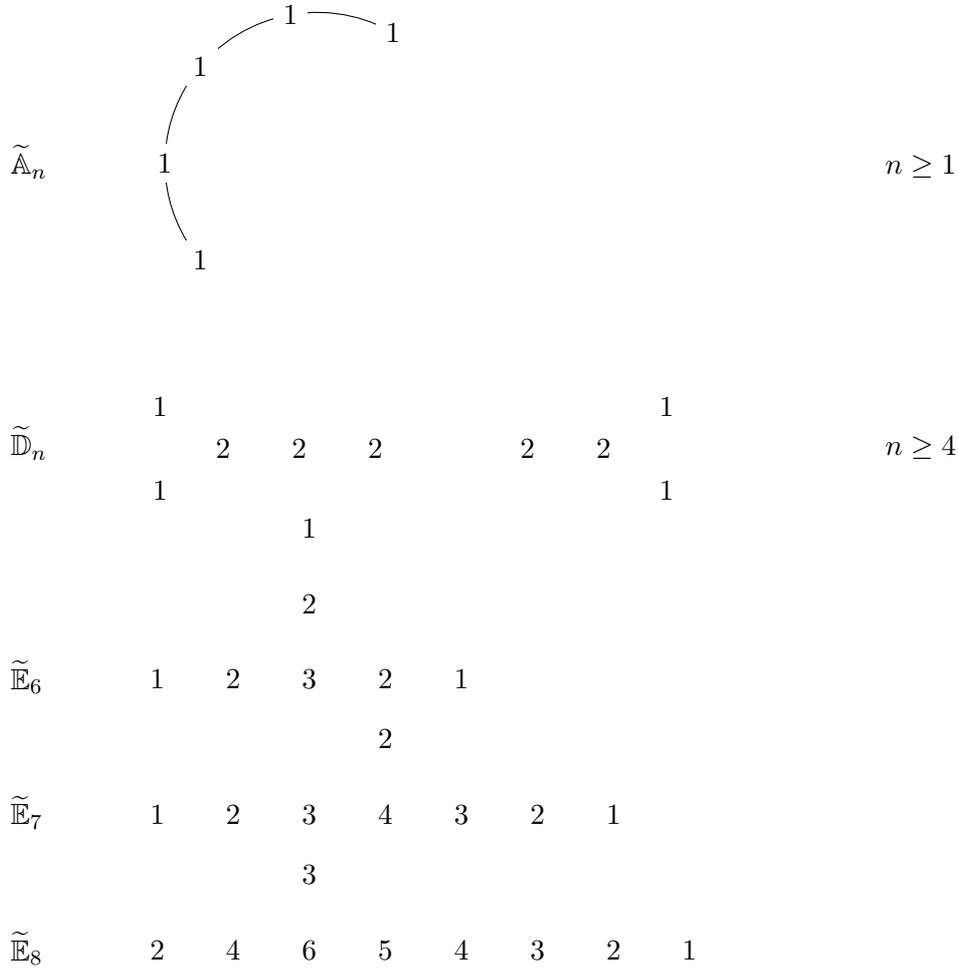

\[
\renewcommand{\arraystretch}{3}
\begin{array}{lll}
\makebox[1.5cm][l]{$\widetilde{\mathbb{A}}_n$} & \hspace*{1cm} \raisebox{-1.4cm}{\usebox{\Antildeet}} & \hspace{2cm}n \ge 1 \\\\
%\widetilde{\mathbb{D}}_n & \hspace{-7.1679cm}\raisebox{0.1cm}{\usebox{\Dntilde}} & \hspace{4cm}n \ge 4 \\\\
\widetilde{\mathbb{D}}_n & \hspace{-1cm}\raisebox{-0.72cm}{\usebox{\Dntildeet}} & \hspace{2cm}n \ge 4 \\
\widetilde{\mathbb{E}}_6 & \hspace{-3.7cm}\raisebox{-0.12cm}{\usebox{\Esixtildeet}} \\
\widetilde{\mathbb{E}}_7 & \hspace{-1.7cm}\raisebox{-0.12cm}{\usebox{\Esepttildeet}} \\
\widetilde{\mathbb{E}}_8 & \hspace{-0.7cm}\raisebox{-0.12cm}{\usebox{\Ehuittildeet}}
\end{array}
\]
\caption{The additive functions of extended Dynkin Diagrams}\label{functions}
\end{figure}

\begin{proof} (of the theorem)
If $G$ is an extended Dynkin diagram, then $G$ has an additive function: see Figure \ref{functions}. If $G$ is a Dynkin diagram, then it is a proper subgraph of some extended Dynkin diagram, so that it has a subadditive function, which is not additive by Proposition \ref{notadditive}.

Conversely suppose that $G$ has an additive function $f$. If $G$ is not a Dynkin diagram, then by Proposition \ref{containsDynkin}, it contains an extended Dynkin diagram $G'$. If $G'$ is a proper subgraph, then $f\mid G'$ is not additive by Proposition \ref{notadditive}, which contradicts Proposition \ref{additive}. Thus, $G=G'$ and $G$ is an extended Dynkin diagram.

Suppose now that $G$ has a subadditive function $f$ which is not additive. If $G$ is not a Dynkin diagram, then by Proposition \ref{containsDynki}, it contains an extended Dynkin diagram $G'$; then $f\mid G'$ is subadditive but not additive by Proposition \ref{notadditive} (indeed, either $G'=G$, or $G'$ is a proper subgraph of $G$); this contradicts Proposition \ref{additive}.
\end{proof}

\section{Rational frieze implies Dynkin diagram}

\begin{theorem} \label{theorem1}
Let $Q$ be a quiver such that the underlying undirected graph is connected. The sequences on the frieze associated to $Q$ are simultaneously bounded or unbounded. Suppose that the frieze is rational. If the sequences are all bounded (resp. all unbounded), then $Q$ is a Dynkin diagram (resp. an extended Dynkin diagram ) with some acyclic orientation. 
\end{theorem}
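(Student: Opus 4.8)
The plan is to convert the multiplicative frieze recursion~(\ref{friserecursion}) into a (sub)additive condition on the vertices and then invoke the Vinberg--Berman--Moody--Wonenburger characterization (Theorem~\ref{Dynkin}). Since a frieze is defined only for an acyclic quiver, $Q$ already carries an acyclic orientation, so it suffices to show that its underlying graph $G$ is a Dynkin diagram (bounded case) or an extended Dynkin diagram (unbounded case). I would first establish the dichotomy ``simultaneously bounded or unbounded''. Writing~(\ref{friserecursion}) as
\[
a(v,n+1)\,a(v,n)=1+\prod_{w\to v}a(w,n+1)\prod_{v\to w}a(w,n),
\]
and using that all values are positive integers, observe that whenever $\{v,w\}$ is an edge the factor $a(v,\cdot)$ occurs in the right-hand product of the recursion for $w$ (at time $n+1$ if $v\to w$, at time $n$ if $w\to v$); hence $a(w,n+1)a(w,n)$ dominates some $a(v,\cdot)$, so $v$ unbounded forces $w$ unbounded. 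By connectivity unboundedness spreads to all of $V$, which gives the dichotomy.

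In the bounded case each sequence is a bounded integer-valued rational sequence, hence eventually periodic; choosing a common period $T$ and a window on which all of them are purely periodic, I set $f(v)=\sum_{n \bmod T}\log a(v,n)$. Taking the product of the recursion over one full period, together with the strict inequality $a(v,n+1)a(v,n)>\prod_{w\to v}a(w,n+1)\prod_{v\to w}a(w,n)$ coming from the ``$+1$'', yields
\[
2f(v)>\sum_{\{v,w\}\in E}f(w)\qquad(v\in V).
\]
Moreover $f(v)>0$, since two consecutive values cannot both equal $1$ (their product is $\ge 2$). Thus $f$ is a subadditive function which is not additive, and Theorem~\ref{Dynkin} forces $G$ to be a Dynkin diagram.

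In the unbounded case I would pass to growth rates, invoking the asymptotic form of Corollary~\ref{equivalent}: let $\mu_v\ge 1$ be the exponential growth rate of $a(v,n)$ and $d_v\ge 0$ its polynomial degree. Taking logarithms in the recursion and using that, once the neighbouring sequences grow, $\log(1+P_n)=\log P_n+o(1)$ with $P_n=\prod_{w\to v}a(w,n+1)\prod_{v\to w}a(w,n)$, gives the exact asymptotic equality
\[
\log a(v,n+1)+\log a(v,n)=\sum_{w\to v}\log a(w,n+1)+\sum_{v\to w}\log a(w,n)+o(1).
\]
Dividing by $n$ and letting $n\to\infty$ yields $2\log\mu_v=\sum_{\{v,w\}\in E}\log\mu_w$; if instead all $\mu_v=1$, dividing by $\log n$ yields $2d_v=\sum_{\{v,w\}\in E}d_w$. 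Either way we obtain a nonnegative additive function ($\log\mu$, or $d$ when $\log\mu\equiv 0$) that is not identically zero, by unboundedness. A nonnegative additive function on a connected graph that vanishes at one vertex vanishes at all its neighbours, hence everywhere; so our function is everywhere positive, and Theorem~\ref{Dynkin} identifies $G$ as an extended Dynkin diagram.

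The main obstacle is the unbounded case: one must extract \emph{exact} additivity, not merely subadditivity, and guarantee positivity of the growth function. The equality is delicate because the ``$+1$'' is only asymptotically negligible, so controlling the $o(1)$ term and justifying the termwise limits relies on the clean asymptotics of Corollary~\ref{equivalent}; applying that corollary presupposes enough regularity of the $a(v,n)$ (a dominant eigenvalue on each arithmetic progression, i.e.\ $\mathbb N$-rationality), which must first be arranged rather than taken from the mere rationality hypothesis. The secondary difficulty is the bifurcation between exponential and polynomial growth: when all $\mu_v=1$ the exponential-rate function degenerates to $0$ and one must descend to the polynomial degrees $d_v$, checking that they are constant across the residue classes produced by the merge and satisfy the same additive relation; this verification is the technical heart of the argument.
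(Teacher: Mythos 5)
Your dichotomy argument and your bounded case coincide with the paper's proof: a common period $p$, the product $b(v)=\prod_{n_0\le n<n_0+p}a(v,n)$, the strict inequality $b(v)^2>\prod_{\{v,w\}\in E}b(w)$ coming from the $+1$ in the recursion, the observation that $b(v)>1$ because two consecutive values cannot both equal $1$, and then Theorem \ref{Dynkin}. The unbounded case, however, has a genuine gap exactly where you place your ``technical heart''. You set $\mu_v$ equal to \emph{the} exponential growth rate of $a(v,n)$ and $d_v$ to \emph{its} polynomial degree, but Corollary \ref{equivalent} only provides asymptotics along residue classes: $a(v,pn+i)\approx\lambda(v,i)^n n^{e(v,i)}$, and the $\lambda(v,i)$ (resp.\ the $e(v,i)$) may genuinely differ with $i$ --- an $\mathbb N$-rational sequence can be a merge of, say, $2^n$ and $3^n$, in which case $\log a(v,n)/n$ has no limit and your ``divide by $n$ and let $n\to\infty$'' step is meaningless. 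The paper's proof is engineered precisely around this obstruction: it replaces $a(v,n)$ by the product over one period, $b(v,n)=a(v,n)a(v,n+1)\cdots a(v,n+p-1)$, which does have single-scale asymptotics $b(v,pn)\approx\lambda(v)^n n^{f(v)}$ with the aggregated quantities $\lambda(v)=\prod_i\lambda(v,i)$ and $f(v)=\sum_i e(v,i)$; multiplying the recursion over the $p$ consecutive indices and using Corollary \ref{equivalent}(iii) (namely $\lambda(v,0)=\lambda(v,p)$ and $e(v,0)=e(v,p)$, which closes the telescoping at the end of the period) then yields \emph{both} relations $\lambda(v)^2=\prod_{\{v,w\}\in E}\lambda(w)$ and $2f(v)=\sum_{\{v,w\}\in E}f(w)$ simultaneously. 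After that, the case split (all $f(v)>0$, or else $f\equiv 0$ by connectedness and nonnegativity, whence $e(v,i)=0$ for all $v,i$ and Corollary \ref{equivalent}(ii) forces $\lambda(v)>1$ at every vertex) is the same two-tier argument as yours, with the tiers in the opposite order; your handling of the ``$1+u_n$'' term matches the paper's technicality as well, since $\approx$ tolerates a bounded $u_n$ with positive limit.

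One further caveat: you cannot ``arrange'' $\mathbb N$-rationality from the rationality hypothesis --- a rational sequence with values in $\mathbb N$ need not be $\mathbb N$-rational (Berstel's theorem, cf.\ the remark after Theorem \ref{merge}) --- and the paper accordingly proves the theorem only under the stronger assumption that the frieze is $\mathbb N$-rational, deferring the general rational case to \cite{ARS}. Your write-up should state the same restriction rather than treat the upgrade as a preliminary step to be supplied.
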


The theorem is proved by studying the asymptotics of the sequences; these are computed by the exponential polynomial. Then the additive or subadditive function is obtained by taking the logarithm. It appears that the recursion formula (\ref{friserecursion}) is some multiplicative analogue of the additivity or subadditivity formula.

We only prove the theorem under the stronger assumption that the frieze is $\mathbb N$-rational. The complete result is proved in \cite{ARS}.

\begin{proof}
1. Formula (\ref{friserecursion}) may be rewritten as
\begin{equation}\label{recursionbis}
\quad a(v,n+1)a(v,n)=1+\prod_{v\rightarrow w}a(w,n)\prod_{w\rightarrow v}a(w,n+1)  .
\end{equation}
Recall that by the Laurent phenomenon, these numbers are all positive natural numbers. 
Hence if $a(v,n)$ is bounded, then $a(w,n)$ is bounded for each neighbour $w$ of $v$. Since the graph is connected, if one sequence is bounded, then each sequence is bounded.

2. We now assume that all the sequences are bounded. Since they are integer-valued, they take only finitely many values. Since they satisfy linear recursions, they are ultimately periodic. Let $p$ be a common period and let $n_0$ be such that each sequence is purely periodic for $n\geq n_0$.

Let $b(v)=\prod_{n_0\leq n <n_0+p} a(v,n)$. Note that $b(v)>1$. Indeed, if $a(v,n)=1$, then $a(v,n+1)>1$ by Eq.(\ref{friserecursionbis}); moreover, each $a(v,n)$ is a positive integer. We have, since $a(v,n_0)=a(v,n_0+p)$, $b(v)=\prod_{n_0\leq n <n_0+p} a(v,n+1)$. Thus
\begin{eqnarray*}
b(v)^2 &=& \left(\prod_{n_0\leq n <n_0+p} a(v,n)\right)\left( \prod_{n_0\leq n <n_0+p} a(v,n+1)\right) \\
&=& \prod_{n_0\leq n <n_0+p} a(v,n)a(v,n+1) \\
&=& \prod_{n_0\leq n <n_0+p} (1+\prod_{v\rightarrow w}a(w,n)\prod_{w\rightarrow v}a(w,n+1))
\end{eqnarray*}
by Eq.(\ref{recursionbis}). Thus
\begin{eqnarray*}
b(v)^2 &>& \prod_{n_0\leq n <n_0+p} (\prod_{v\rightarrow w}a(w,n)\prod_{w\rightarrow v}a(w,n+1) )\\
&=&  ( \prod_{v\rightarrow w}\prod_{n_0\leq n <n_0+p}a(w,n) )(\prod_{w\rightarrow v}\prod_{n_0\leq n <n_0+p} a(w,n+1))\\
&=& ( \prod_{v\rightarrow w} b(w))(\prod_{w\rightarrow v} b(w) ) \\
&=& \prod _{\{v,w\}\in E} b(w) ,
\end{eqnarray*}
where $E$ is the set of edges of the underlying graph.
Taking logarithms, we obtain
$$
2 log(b(v)) > \sum _{\{v,w\}\in E} log(b(w)) 
$$
and we have a subadditive function which is not additive, since $b(v)>1$.

3. We now assume that all the sequences are unbounded. We apply Corollary \ref{equivalent}: there exist a positive natural number $p$, and for $i=0,\ldots,p$ and $v\in V$, natural numbers $e(v,i)$ and $\lambda(v,i)\in\mathbb R_{\geq1}$ such that
$$a(v,i+pn)\approx n^{e(i,v)}\lambda(v,i)^{n}
$$
when $n\rightarrow \infty$.

Define
$$b(v,n)=a(v,n)a(v,n+1)\cdots a(v,n+p-1).$$ 
We have

$$
b(v,pn)=a(v,pan)a(v,pn+1)\cdots a(v,pn+p-1) $$ $$
\approx \lambda(v,0)^{n} n^{e(v,0)}\cdots \lambda(v,p-1)^{n} n^{e(v,p-1)}\approx \lambda(v)^{n} n^{f(v)}
$$
so that we have
\begin{equation}\label{bjpnk}
b(v,pn)\approx \lambda(v)^{n} n^{f(v)}
\end{equation}

where $\lambda(v)=\lambda(v,0)\cdots\lambda(v,p-1)$ and $f(v)=e(v,0)+\cdots+e(v,p-1)$. Now, by rearrangement,
$$b(v,pn)^2=a(v,pn)a(v,pn+1)a(v,pn+1)a(v,pn+2)\cdots $$
$$ \cdots a(v,pn+p-1)a(v,pn).
$$
By Corollary \ref{equivalent}, $a(v,pn)\approx a(v,pn+p)$. Thus
$$b(v,pn)^2 \approx\prod_{0\leq i<p}a(v,pn+i)a(v,pn+i+1).$$
Using Eq.(\ref{recursionbis}), we obtain
$$
b(v,pn)^2\approx \prod_{0\leq i<p} (1+\prod_{v\rightarrow w} a(w,pn+i)\prod_{w\rightarrow v} a(w,pn+i+1)).
$$
Let $$u_n=\prod_{v\rightarrow w} a(w,pn+i)\prod_{w\rightarrow v} a(w,pn+i+1).$$ 

Now comes a technicality. By Corollary \ref{equivalent}, $a(w,pn+i) \approx \lambda(w,i)^n n^{e(w,i)}$ with $\lambda(w,i)\geq 1$ and $e(w,i)\geq 0$. Hence if $u_n$ is unbounded, there exists $w$ with: either $v\rightarrow w$, and $\lambda(w,i)>1$ or $e(w,i)\geq 1$; or $w\rightarrow v$, and $\lambda(w,i+1)>1$ or $e(w,i+1)\geq 1$. Then $\lim_{n\rightarrow\infty}u_n=\infty$ and $u_n\approx 1+u_n$. Otherwise, $u_n$ is bounded and by Corollary \ref{equivalent}, $u_n$ has a finite limit, since we must have $\lambda(w,i)=1$ and $e(w,i)=0$; therefore $u_n\approx 1+u_n$. Thus in both cases, $1+u_n\approx u_n$.

We deduce that 
$$
b(v,pn)^2 
\approx\prod_{0\leq i<p} \prod_{v\rightarrow w} a(w,pn+i)\prod_{w\rightarrow v} a(w,pn+i+1)$$
$$\approx\prod_{0\leq i<p}\prod_{v\rightarrow w}\lambda(w,i)^{n} n^{e(w,i)}\prod_{w\rightarrow v} \lambda(w,i+1)^{n} n^{e(w,i+1)} $$
$$\approx\prod_{v\rightarrow w}  \prod_{0\leq i<p} \lambda(w,i)^{n} n^{e(w,i)}\prod_{w\rightarrow v}  \prod_{0\leq i<p} \lambda(w,i+1)^{n} n^{e(w,i+1)}.$$
Since $e(w,0)=e(w,p)$ and $\lambda(w,0)=\lambda(w,p)$, we obtain
\begin{eqnarray*}
b(v,pn)^2 &\approx& (\prod_{v\rightarrow w} \lambda(w)^{n} n^{f(w)}) (\prod_{w\rightarrow v} \lambda(w)^{n} n^{f(w)}) \\
&\approx& \prod_{\{v, w\}\in E}\lambda(w)^{n} n^{f(w)}.
\end{eqnarray*}
Thus by Eq.(\ref{bjpnk}),
$$\lambda(v)^{2n}n^{2f(v)}\approx\prod_{\{v, w\}\in E}\lambda(w)^{n} n^{f(w)}.$$
Therefore, 
$$\lambda(v)^2=\prod_{\{v, w\}\in E}\lambda(w)
$$
and
$$
2f(v)=\sum_{\{v, w\}\in E}f(w).
$$
If the $f(v)$ are all positive, we have the additive function $f$. If one of them is 0, then they are all 0, by connectedness of the graph and the above equation. In this case, $e(v,i)=0$ for any $v$ and $i$. Thus Corollary \ref{equivalent}† ensures that for any $v$, some $\lambda(v,i)>1$ and therefore $\lambda(v)>1$. Taking logarithms, we find
$$
2log(\lambda(v))=\sum_{\{v, w\}\in E}log(\lambda(w)))
$$
and we have therefore an additive function.

In case 2 and 3, we conclude by using Th.\ref{Dynkin}.
\end{proof}
 
\section{Rationality for Dynkin diagrams  of type ${\mathbb A}$ and $\tilde {\mathbb A}$}

It follows from the work of Fomin and Zelevinsky \cite{FZ} that if $Q$ is a quiver of Dynkin type (that is, obtained by orienting the edges of a Dynkin diagram), then the sequences of its frieze are periodic. They are therefore obviously $\mathbf N$-rational.

\begin{theorem}\label{tilde A}
Let $Q$ be an acyclic quiver whose underlying undirected graph is an extended diagram of type $\tilde {\mathbb A}_m$ (that is a cycle). Then its frieze is $\mathbf N$-rational.
\end{theorem}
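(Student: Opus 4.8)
The plan is to realize the whole frieze as a family of arithmetic subsequences of a single diagonal ray in a suitable $SL_2$-tiling, and then invoke Theorem \ref{rational}. First I would encode the oriented cycle as a periodic frontier. The diagram $\tilde{\mathbb A}_m$ is a cycle with $m+1$ edges; reading these edges in cyclic order I obtain a word $w\in\{x,y\}^{m+1}$, where an edge contributes the letter $x$ or the letter $y$ according to whether its orientation agrees with or opposes the chosen rotational sense. Since $Q$ is acyclic, its orientation is not a directed cycle, so $w$ contains at least one $x$ and at least one $y$; hence the purely periodic frontier ${}^\infty w^\infty$ is admissible, and Theorem \ref{tiling-variables} yields the associated $SL_2$-tiling $t$ over $\mathbb N$, with all variables equal to $1$.

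The heart of the argument is to match the frieze with $t$. I would place the entries $a(v,n)$ at lattice points below the frontier so that, for each vertex $v$, the two consecutive values $a(v,n)$ and $a(v,n+1)$ occupy the main diagonal (the NW and SE corners) of a $2\times2$ square, while the two cycle-neighbours of $v$ occupy its east and south corners. Because each vertex of a cycle has exactly two neighbours, the anti-diagonal of the square carries exactly the two entries required, and the letter $x$ versus $y$ recorded in $w$ dictates which neighbour is read at time $n$ and which at time $n+1$, matching exactly whether the arrow is $v\to w$ or $w\to v$ in (\ref{recursionbis}). Under this placement the unit-minor condition $t(x,y)\,t(x+1,y+1)=1+t(x+1,y)\,t(x,y+1)$ defining $t$ becomes the rewritten frieze recursion (\ref{recursionbis}), while the value $1$ along the frontier matches the initial data $a(v,0)=1$. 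Since the frieze is the unique family satisfying (\ref{friserecursion}) with these initial values, the frieze entries coincide with the corresponding values of $t$; reading $t$ along a diagonal ray $n\mapsto t(M+n(1,1))$ then interleaves the $m+1$ vertex sequences, so that each $a(v,\cdot)$ is, up to finitely many terms, the decimation by $m+1$ of this ray. For $\tilde{\mathbb A}_2$ (with $w=xxy$) this identification is exactly the coincidence of the numbers in Figures \ref{friseA2} and \ref{tiling}.

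Finally I would conclude by $\mathbb N$-rationality of rays. The frontier ${}^\infty w^\infty$ is periodic with variables all equal to $1$, so Theorem \ref{rational} gives that the diagonal ray $(t(M+n(1,1)))_n$ is $\mathbb N$-rational. It then suffices to note that every arithmetic subsequence of an $\mathbb N$-rational sequence is again $\mathbb N$-rational: writing $c_n=\lambda M^n\gamma$ as in form (i), one has $c_{j+(m+1)n}=(\lambda M^{j})\,(M^{m+1})^{n}\,\gamma$ with $\lambda M^{j}\in\mathbb N^{1\times d}$, $M^{m+1}\in\mathbb N^{d\times d}$ and $\gamma\in\mathbb N^{d\times 1}$, which is again of form (i). Combined with Corollary \ref{prop}(i), which lets me ignore the finitely many exceptional initial terms, this shows that each $a(v,\cdot)$ is $\mathbb N$-rational, hence the frieze is $\mathbb N$-rational.

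The step I expect to be the main obstacle is the middle one: fixing the lattice placement so that the east and south neighbours in $t$ correspond, at the correct time steps, to the two cycle-neighbours of $v$, consistently with the orientation recorded by $w$. This is a piece of bookkeeping about the staircase shape of the frontier and the phase of the period $w$; once it is settled, the identification of the $SL_2$ relation with (\ref{recursionbis}) and the reduction to Theorem \ref{rational} are routine.
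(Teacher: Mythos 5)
Your overall route is exactly the paper's: encode the acyclic orientation of the cycle as a word $w\in\{x,y\}^{m+1}$, note that ${}^\infty w^\infty$ is admissible because both letters occur (acyclicity), identify the frieze recursion (\ref{recursionbis}) with the $SL_2$ condition on adjacent $2\times 2$ squares --- the paper carries out this bookkeeping through the four configurations of Figure \ref{config}, which are the four values of the pair $(x_{j-1},x_j)$ --- and conclude by Theorem \ref{rational}. So the skeleton is sound, and the step you flagged as the main obstacle is indeed the four-case matching, which works just as you expect.

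There is, however, one genuine error in your last step, and it contradicts your own placement. If $a(v,n)$ and $a(v,n+1)$ occupy the NW and SE corners of a $2\times2$ square, they are diagonal-adjacent; hence the diagonal ray starting at the frontier point carrying $a(v,0)=1$ reads off the \emph{entire} sequence $a(v,\cdot)$ consecutively. It is therefore false that a single diagonal ray interleaves the $m+1$ vertex sequences, and false that $a(v,\cdot)$ is the decimation by $m+1$ of such a ray. This is precisely how the paper proceeds: it introduces $m+1$ diagonal rays $b(j,n)=t(P_j+n(1,1))$, one for each frontier point $P_j$ in a period, and shows $b(j,n)=a(j,n)$ by induction via the four cases. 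You can check the geometry on the running example $w=xxy$: in Figure \ref{tiling} the diagonal through the boldfaced $2$ reads $2,11,97,571,\ldots$, which is a single vertex sequence of Figure \ref{friseA2}; what interleaves the three sequences is the \emph{horizontal} ray (the bottom row $1,2,3,7,11,26,41,97,\ldots$), not the diagonal one. The repair is immediate and in fact shortens your proof: delete the decimation step (your computation $c_{j+(m+1)n}=(\lambda M^{j})(M^{m+1})^{n}\gamma$ is a correct lemma, just idle here) and apply Theorem \ref{rational} --- which covers every ray, hence each of the $m+1$ diagonal rays with origins $P_1,\ldots,P_{m+1}$ --- to conclude; since $b(j,0)=a(j,0)=1$ exactly, you do not even need Corollary \ref{prop}(i) to discard initial terms.
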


The theorem is proved by associating to such a quiver $Q$ an $SL_2$-tiling associated to a purely periodic frontier. Then applying Theorem \ref{rational}, since the sequences of the frieze appear on the diagonals of the tiling.

\begin{proof}
Let $1,...,m+1$ be the vertices of the graph $\tilde {\mathbb A}_m$, with edges $\{j,j+1\}$, $j=1,...,m+1$, with $j+1$ taken $mod \, m+1$. An acyclic orientation being given, let $x_j=x$ if the orientation is $j\rightarrow j+1$ and $x_j=y$ if it is $j\leftarrow j+1$. Let $a(j,n)$ be the sequences of the frieze, $j=1,\ldots,m+1$. We extend the notation $a(j,n)$ to $j\in \mathbf Z$ by taking $j$ $mod. m+1$. Let $w$ be the word $x_1\cdots x_{m+1}$, which encodes the orientation. Then $^\infty w^\infty$ is an admissible frontier; indeed, $x$ and $y$ appear both in $w$, since the orientation is acyclic. Embed this frontier into the plane and denote by $P_j$, with $j\in \mathbf Z$, the successive points of this embedding, in such a way that  $P_j$ corresponds to the point between $x_{j-1}$ and $x_j$, with $j$ taken $mod.\, m+1$. Let $t$ be the $SL_2$-tiling given by Theorem \ref{tiling-variables}. Because of the periodicity of the frontier, the diagonal ray $b(j,n)$ of origin $P_j$, $j\in \mathbf Z$, depends only on the class of $j$ $mod \, m+1$.

We claim that the ray $b(j,n)$ is equal to $a(j,n)$, for $j=1,..,m+1$. This is true for $n=0$, since both are equal to 1. It is enough to show that $b(j,n)$ satisfies Eq. (\ref{friserecursion}). Fix $j=1,...,m+1$. We have four cases according to the relative positions of $P_{j-1},P_j,P_{j+1}$ .
They correspond to the four possible values of the couple $(x_{j-1},x_j)$: $$ (x,x), (x,y), (y,x),(y,y).$$
By definition of $w$, these four cases correspond to the four possible orientations:
$$j-1\rightarrow j\rightarrow j+1, \,j-1\rightarrow j\leftarrow j+1,$$
$$\, j-1\leftarrow j\rightarrow j+1,\, j-1\leftarrow j\leftarrow j+1.
$$
Thus, by Eq.(\ref{friserecursion}), they correspond to the four induction formulas $a(j,n+1)=\frac{1+A}{a(j,n)}$, where $A$ takes one of the four possible values:
$$
a(j-1,n+1)a(j+1,n), \quad a(j-1,n+1)a(j+1,n+1), $$
$$a(j-1,n)a(j+1,n), \quad a(j-1,n)a(j+1,n+1).
$$
Regarding the tiling, these four cases correspond to the four possible configurations, shown in Figure \ref{config}.

\begin{figure}
$$
\begin{array}{lllllllllllllll}
P_{j-1}&P_j&P_{j+1} \\
&.&.&. \\
&&.&.&. \\
&&&.&.&. \\
&&&&b(j-1,n)&b(j,n)&b(j+1,n) \\
&&&&&b(j-1,n+1)&b(j,n+1)\\
\\
\\
&P_{j+1} \\
P_{j-1}&P_j&. \\
&.&.&. \\
&&.&.&b(j+1,n) \\
&&&b(j-1,n)&b(j,n)&b(j+1,n+1) \\
&&&&b(j-1,n+1)&b(j,n+1)
\\
\\
P_j&P_{j+1} \\
P_{j-1}&.&. \\
&.&.&. \\
&&.&b(j,n)&b(j+1,n) \\
&&&b(j-1,n)&b(j,n+1) \\
\\
\\
P_{j+1} \\
P_j &.\\
P_{j-1}&.&. \\
&.&.&b(j+1,n) \\
&&.&b(j,n)&b(j+1,n+1) \\
&&&b(j-1,n)&b(j,n+1) \\
\end{array}
$$
\caption{The 4 possible configurations}\label{config}
\end{figure}

Hence, by the $SL_2$-condition, they correspond to the four induction formulas for $b(j,n)$: $b(j,n+1)=\frac{1+B}{b(j,n)}$, where $B$ takes one of the four possible values:
$$
 b(j-1,n+1)b(j+1,n), b(j-1,n+1)b(j+1,n+1),$$
$$ b(j-1,n)b(j+1,n), b(j-1,n)b(j+1,n+1).
$$
We deduce that $a(j,n)=b(j,n)$ for ant $j$ and $n$. This concludes the proof, by using Theorem \ref{rational}.

\end{proof}

The proof is illustrated in the frieze shown in Figure \ref{friseA2}, of type $\tilde A_2$. The corresponding tiling is shown in Figure \ref{tiling}.

\section{Further properties of $SL_2$-tilings}

\subsection{Tameness and linearization coefficients}\label{tameness}

\begin{proposition}\label{threecolumns}
Given three successive columns $C_0,C_1,C_2$ of a tame $SL_2$-tiling $t$, there is a unique coefficient $\alpha$ such that 
\begin{equation}\label{lin}
C_0+C_2=\alpha C_1.
\end{equation}
\end{proposition}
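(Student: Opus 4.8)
The plan is to read the relation off from the rank-two hypothesis together with the two adjacent $2\times 2$ minors built from the columns $C_1$ and $C_2$. First I would fix the abscissa of $C_0$, index rows by $y\in\mathbb Z$, and write $a_y,b_y,c_y$ for the entries of $C_0,C_1,C_2$ in row $y$. The defining $SL_2$ condition, applied to the two pairs of adjacent columns on the rows $y,y+1$, gives
$$a_yb_{y+1}-b_ya_{y+1}=1,\qquad b_yc_{y+1}-c_yb_{y+1}=1.$$
In particular each of these $2\times 2$ minors is nonzero, so $C_0,C_1$ are linearly independent and likewise $C_1,C_2$; in particular $C_1\neq 0$.

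Next I would invoke tameness. By definition a tame tiling has rank $2$, so every $3\times 3$ minor of the infinite matrix vanishes; applied to the three columns $C_0,C_1,C_2$ and an arbitrary triple of rows, this says exactly that $C_0,C_1,C_2$ are linearly dependent as elements of $K^{\mathbb Z}$. Since $C_0,C_1$ are already independent, they span the two-dimensional column space, and there are unique scalars $p,q\in K$ with $C_2=pC_0+qC_1$.

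It then remains to show $p=-1$. I would substitute $c_y=pa_y+qb_y$ and $c_{y+1}=pa_{y+1}+qb_{y+1}$ into the second minor above. The $q$-terms cancel and one is left with
$$b_yc_{y+1}-c_yb_{y+1}=p\,(b_ya_{y+1}-a_yb_{y+1})=-p\,(a_yb_{y+1}-b_ya_{y+1})=-p.$$
Comparing with $b_yc_{y+1}-c_yb_{y+1}=1$ forces $p=-1$. Hence $C_2=-C_0+qC_1$, that is $C_0+C_2=qC_1$, and one sets $\alpha=q$. Uniqueness of $\alpha$ is immediate: if $C_0+C_2=\alpha C_1=\alpha'C_1$ then $(\alpha-\alpha')C_1=0$, and $C_1\neq 0$ gives $\alpha=\alpha'$ (this also recovers $q$ as the unique coefficient, in agreement with the uniqueness of $p,q$).

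The only real input is tameness: it is what guarantees that a linear relation among $C_0,C_1,C_2$ exists at all, a relation that would typically fail for an $SL_2$-tiling of rank $\ge 3$. Once existence is granted, the shape of the relation — coefficient $-1$ on both $C_0$ and $C_2$ — is forced by the single computation above, in which the $SL_2$ normalization of the minors (equal to $1$ rather than to an arbitrary constant) is precisely what pins down $p=-1$. Thus I expect the substantive step to be the correct invocation of the rank-two property; everything after it is a two-line determinant manipulation.
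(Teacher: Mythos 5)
Your proof is correct and rests on exactly the same two ingredients as the paper's: tameness forces a linear dependence among $C_0,C_1,C_2$, and the unit adjacent minors pin the extreme coefficients to $-1$ (the paper does this in one stroke by expanding a vanishing $3\times 3$ determinant along its first row, whereas you first write $C_2=pC_0+qC_1$ and then solve $p=-1$). The only thing the paper's expansion gives beyond your argument is the explicit formula $\alpha=\det\left(\begin{array}{cc}a&c\\d&f\end{array}\right)$, the semi-adjacent minor, which is exploited later in the text but is not required by the statement itself.
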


\begin{proof} Consider two consecutive rows, with elements $a,b,c$ in the first row and in columns $C_0,C_1,C_2$ respectively, and elements $d,e,f$ in the second row; now let $x,y,z$ be elements of these 3 columns, respectively, located on some arbitrary row; then the 3 by 3 matrix $\left(\begin{array}{ccc}x&y&z\\a&b&c\\d&e&f\end{array}\right)$ is a submatrix of the tiling; hence its determinant is 0, so that, by expanding the determinant with respect to the first row, and noting that the 2 by 2 adjacent minors are equal to 1, we obtain that $x+z=\alpha y$, with $\alpha =det \left(\begin{array}{cc}a&c\\d&f\end{array}\right)$. Hence Eq.(\ref{lin}) holds.
\end{proof}

We call $\alpha$ the {\em linearization coefficient} of column $C_1$. A similarly definition applies for rows. 

The next result shows that the $SL_2$-tiling associated to a frontier with variables all equal to 1 is tame; moreover the linearization coefficients are easily computed. In order to understand the statement, note that each column intersects the frontier  and that this intersection is a finite number of points of $\mathbb Z^2$; for example, in the tiling shown in Figure \ref{tilingN}, the column containing the number 29 has 4 intersection points with the frontier.

\begin{corollary}\label{lincoeff}
Let $t$ be the $SL_2$-tiling $t$ associated to some frontier; then $t$ is tame.  Let $C$ be a column of $t$;  let $k$ be the number of common points in this column and the frontier. Then the linearization coefficient of $C$ is $k+1$.
\end{corollary}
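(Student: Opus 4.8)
The plan is to deduce tameness from an explicit rank-one factorization of $t$, and then to read the linearization coefficient off the same factorization, so that both assertions come from a single computation. Throughout I work in the case treated in the excerpt, where the variables on the frontier are all $1$, so that $t(P)=\mu(w)_{22}$ by Corollary \ref{t(P)=mu(w)}.

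First I would set up coordinates along the frontier. Writing the frontier as lattice points $\dots,Q_{-1},Q_0,Q_1,\dots$ joined by steps $s_i\in\{x,y\}$, I would put $P_j=\mu(s_1\cdots s_j)$ for $j\ge 0$ and $P_j=\mu(s_{j+1}\cdots s_0)^{-1}$ for $j<0$, so that $\mu(s_{a+1}\cdots s_b)=P_a^{-1}P_b$ in $SL_2(\mathbb Z)$ for all $a\le b$. For a point $(u,v)$ let $Q_{a(v)}$ be its horizontal projection (move left to the frontier) and $Q_{b(u)}$ its vertical projection (move up); then $a$ depends only on the row $v$, $b$ only on the column $u$, and the word of $(u,v)$ is $s_{a(v)+1}\cdots s_{b(u)}$. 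Corollary \ref{t(P)=mu(w)} then gives
\[
t(u,v)=(0,1)\,P_{a(v)}^{-1}P_{b(u)}\left(\begin{array}{c}0\\1\end{array}\right)=L_v\,M_u,\qquad L_v=(0,1)P_{a(v)}^{-1},\quad M_u=P_{b(u)}\left(\begin{array}{c}0\\1\end{array}\right).
\]
Here $L_v$ is a row $2$-vector depending only on $v$ and $M_u$ a column $2$-vector depending only on $u$, so $t$ has rank at most $2$; as every $SL_2$-tiling has rank at least $2$, this already yields tameness. Equivalently, one extends the projections to all of $\mathbb Z^2$, checks that $(u,v)\mapsto L_vM_u$ satisfies the $SL_2$-condition and equals $1$ on the frontier (where $a(v)=b(u)$), and invokes the uniqueness proved after Theorem \ref{tiling-variables}.

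The determinant check reuses the computation from the proof of Theorem \ref{tiling-variables}. Moving one column to the right appends $y^{l}x$ to the word and moving one row down prepends $yx^{k}$ (Figure \ref{words}); in terms of partial products this reads $P_{b(u+1)}=P_{b(u)}\mu(y)^{l(u)}\mu(x)$ and $P_{a(v+1)}^{-1}=\mu(y)\mu(x)^{k(v)}P_{a(v)}^{-1}$. Hence $M_{u+1}=P_{b(u)}(1,\,l(u)+1)^{t}$ and $L_{v+1}=(1,\,k(v)+1)P_{a(v)}^{-1}$, and since the $2\times 2$ block of $t$ at $(u,v)$ factors as $\left(\begin{array}{c}L_v\\ L_{v+1}\end{array}\right)\left(M_u\ M_{u+1}\right)$, its determinant equals $\det\left(\begin{array}{c}L_v\\ L_{v+1}\end{array}\right)\cdot\det\left(M_u\ M_{u+1}\right)=(-1)(-1)=1$, the two factors being exactly the determinants $\det\left(\begin{array}{cc}0&1\\1&k+1\end{array}\right)$ and $\det\left(\begin{array}{cc}0&1\\1&l+1\end{array}\right)$ that already occurred there.

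For the linearization coefficient, tameness lets me apply Proposition \ref{threecolumns}, yielding the unique $\alpha$ with $C_{u-1}+C_{u+1}=\alpha C_u$. Since $t(u',v)=L_vM_{u'}$ and $L_v,L_{v+1}$ are independent, this column identity is equivalent to the vector identity $M_{u-1}+M_{u+1}=\alpha M_u$. Using the shift relation above (and its analogue one column to the left, which gives $M_{u-1}=P_{b(u)}\mu(x)^{-1}(0,1)^{t}=P_{b(u)}(-1,1)^{t}$), I would compute $M_{u-1}+M_{u+1}=P_{b(u)}(0,\,l(u)+2)^{t}=(l(u)+2)M_u$, so $\alpha=l(u)+2$. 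The main—and only delicate—step is the geometric identification of $l(u)$: the number $l(u)$ of $y$-steps inserted between columns $u$ and $u+1$ is exactly the length of the maximal vertical run of the frontier at abscissa $x=u$. The vertical projection $Q_{b(u)}$ is the bottom of that run (it is entered by a horizontal step from the left), and the run is exited by a horizontal step to the right after $l(u)$ vertical steps; hence the lattice points of the frontier in the column $C_u$ are precisely the $l(u)+1$ points of this run. Therefore $k=l(u)+1$ and $\alpha=l(u)+2=k+1$, as claimed.
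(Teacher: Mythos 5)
Your proposal is correct, but it reaches the two conclusions by a genuinely different route than the paper. The paper proves both claims at once and entrywise: for three consecutive points $P,Q,R$ on a row below the frontier it writes their words as $w$, $wy^jx$, $wy^jxy^{k-1}x$, multiplies out the matrices via Corollary \ref{t(P)=mu(w)}, and verifies $t(P)+t(R)=(k+1)t(Q)$ directly, with a symmetric argument above the frontier and a separate easy case when the three points straddle it; tameness follows because every column is then a combination of its two neighbours, and the value $\alpha=k+1$ is also read off instantly by evaluating Eq.~(\ref{lin}) on the special row where the three columns read $1,1,k$. You instead exhibit a global rank-two factorization $t(u,v)=L_vM_u$ through two-sided partial products of $\mu$ along the frontier, which gives tameness in one stroke (every $SL_2$-tiling having rank at least $2$) and reduces the linearization coefficient to the two-dimensional identity $M_{u-1}+M_{u+1}=(l(u)+2)M_u$; in effect you rediscover the linearization viewpoint of Eq.~(\ref{matrixcolumns}) and of \cite{BeR}, and you avoid the paper's below/above/straddling case analysis, at the cost of setting up the two-sided products and making a renewed appeal to the uniqueness statement of Theorem \ref{tiling-variables}. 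Two points deserve tightening. First, the factorization obtained from Corollary \ref{t(P)=mu(w)} is only justified below the frontier, so ``$t$ has rank at most $2$'' does not yet follow from it; the binding argument is your ``equivalently'' version, in which $\tilde t(u,v)=L_vM_u$ is defined on all of $\mathbb Z^2$, the $SL_2$-condition is checked globally (your determinant computation is indeed valid everywhere, since the shift relations for $P_{a(v)}$ and $P_{b(u)}$ concern the frontier alone), the frontier values are checked, and uniqueness is invoked — the first sentence alone would not suffice. Second, your parenthetical ``where $a(v)=b(u)$'' holds only at corner vertices of the frontier: at a general frontier vertex one has $b(u)\le a(v)$ with intermediate frontier word $y^px^m$, and the needed value $\tilde t=\bigl(\mu(y^px^m)^{-1}\bigr)_{22}=1$ still holds, but this requires the one-line computation rather than the equality of indices. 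With these two clarifications your argument is complete and correct.
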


\begin{proof}
Let $C_i$, $i=0,1,2$, be three successive columns with $C=C_1$ and $\alpha$ the linearization coefficient of $C$. Then there exists a row such that the numbers on this row and on these 3 columns are respectively $1,1,k$; see the example above. Then Eq.(\ref{lin}) gives $1+k=\alpha$, which was to be shown.

It remains to show that $t$ is tame. Let $P,Q,R$ be three points in $\mathbb Z^2$ on the same horizontal line. We have to show that $t(P)+t(R)=(k+1)t(Q)$, where $k$ is defined above, with $Q$ on column $C$. Suppose that the points are all below the frontier. Then the words associated by projection on the frontier of $P,Q,R$ are respectively of the form $w,wy^jx,wy^jxy^{k-1}x$, see Figure \ref{PQR}.

\begin{figure}
$$
\begin{array}{cccccccccccc}
&&&&&&&&&\bf 1&\bf 1 \\
&&&&&&&&&\vdots&  \\
&&&&&&&&&\vdots&  \\
&&&&&&&&\bf 1&\bf 1&  \\
&&&&&&&\iddots&&\\
&&&&&&w&&&&  \\
&&&&&\iddots&&&&&    \\
&&&&\bf 1&&&&P&Q&R\\

\end{array}
$$
\caption{}\label{PQR}
\end{figure}

We have $$\mu(y^jx)=\left(\begin{array}{cc}1&0\\j&1\end{array}\right)\left(\begin{array}{cc}1&1\\0&1\end{array}\right)=\left(\begin{array}{cc}1&1\\j&j+1\end{array}\right).$$

Let    
$\mu(w)=\left(\begin{array}{cc}a&b\\c&d\end{array}\right)$. Then $$\mu(wy^jx)=\left(\begin{array}{cc}a&b\\c&d\end{array}\right)\left(\begin{array}{cc}1&1\\j&j+1\end{array}\right)  
=\left(\begin{array}{cc}a+jb&a+(j+1)b\\c+jd&c+(j+1)d\end{array}\right)$$ 
and 
$$\mu(wy^jxy^{k-1}x)=\left(\begin{array}{cc}a+jb&a+(j+1)b\\c+jd&c+(j+1)d\end{array}\right)\left(\begin{array}{cc}1&1\\k-1&k\end{array}\right)$$ 
$$=\left(\begin{array}{cc }\times&\times\\\times&c+jd+kc+k(j+1)d\end{array}\right).$$ 

%Thus $\mu(wxy^{k-1}x)=\mu(wxy^{k-1})\left(\begin{array}{cc}1&1\\0&1\end{array}\right)$
%$=\left(\begin{array}{cc}ka+(k-1)b&(k+1)a+kb\\kc+(k-1)d&(k+1)c+kd\end{array}\right)$. 

Thus by Cor.\ref{t(P)=mu(w)}, we have $t(P)=d$, $t(Q)=c+(j+1)d$ and $t(R)=c+jd+kc+k(j+1)d$. Thus $t(P)+t(R)=(k+1)t(Q)$.

If the points are all above the frontier, the argument is symmetric. Then remaining case is when $P$ is at the left of the frontier, $Q$ on the frontier and $R$ on the right. Then is is easily seen that $t(P)=i$, $t(Q)=1$ and $t(R)=j$ with $i+j=k+1$, which ends the proof.
\end{proof}

Let $C,C'$ be two columns of the $SL_2$-tiling $t$, not necessarily adjacent. We call {\em semi-adjacent }2 {\em by} 2 {\em minor associated to these columns} a 2 by 2 minor of $t$ constructed on these columns and on two adjacent rows. If the columns are themselves adjacent, then the minor equals 1 and is therefore independent of the chosen adjacent rows.
We see in the next result that this independence is still true for any columns.

\begin{proposition}\label{semi-adjacent}
Let $C,C'$ be two columns of the $SL_2$-tiling $t$. Then the semi-adjacent $2$ by $2$ minor associated to these columns is independent of the choice of the chosen adjacent rows.
\end{proposition}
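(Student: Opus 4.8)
The plan is to reduce the statement to the linearization of consecutive rows, which is the row-analogue of Proposition \ref{threecolumns}. Fix the two columns, say at abscissas $c$ and $c'$, and for each ordinate $y$ set $u_y=(t(c,y),t(c',y))$, a vector in $K^2$. The semi-adjacent $2$ by $2$ minor built on the two adjacent rows $y$ and $y+1$ is then exactly
$$m(y)=\det\begin{pmatrix}u_y\\u_{y+1}\end{pmatrix}=t(c,y)t(c',y+1)-t(c',y)t(c,y+1),$$
and the claim is precisely that $m(y)$ does not depend on $y$.

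First I would record that the tiling, being tame by Corollary \ref{lincoeff}, satisfies the row version of Proposition \ref{threecolumns}: for any three consecutive rows there is a scalar $\beta_y$, the linearization coefficient of row $y$, such that $t(x,y-1)+t(x,y+1)=\beta_y\,t(x,y)$ for every abscissa $x$. This is immediate from Proposition \ref{threecolumns} applied to the transpose of $t$, which is again a tame $SL_2$-tiling, since transposition preserves every $2\times 2$ minor and the rank. Specializing $x$ to $c$ and to $c'$ yields the single vector identity $u_{y+1}=\beta_y u_y-u_{y-1}$.

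The key step is then a one-line determinant computation in which $\beta_y$ disappears. Using bilinearity of the $2\times 2$ determinant to kill the $\beta_y u_y$ term, and then its antisymmetry,
$$m(y)=\det\begin{pmatrix}u_y\\ \beta_y u_y-u_{y-1}\end{pmatrix}=\det\begin{pmatrix}u_y\\ -u_{y-1}\end{pmatrix}=\det\begin{pmatrix}u_{y-1}\\ u_y\end{pmatrix}=m(y-1).$$
Hence $m(y)=m(y-1)$ for all $y$, so $m$ is constant, which is exactly the asserted independence of the choice of adjacent rows.

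The only point that genuinely needs securing is the row linearization, i.e.\ the row-analogue of Proposition \ref{threecolumns}; everything else is formal. I expect the main (rather mild) obstacle to be purely conceptual bookkeeping: the coefficient $\beta_y$ really does vary with $y$, so one should not expect a recursion with constant coefficients, and the whole force of the argument is that $\beta_y$ is annihilated by the determinant because the first row $u_y$ absorbs the $\beta_y u_y$ contribution. No property of tameness beyond the mere existence of these row recursions is used, and the same computation applies verbatim whether the two rows lie above the frontier, below it, or straddle it, since the row linearization holds uniformly across the plane.
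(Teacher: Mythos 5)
Your proof is correct, but it takes a genuinely different route from the paper's. The paper deduces the proposition from an explicit formula: restricting the columns to the two chosen rows and applying Lemma \ref{linearcontinuant} together with $\det(C_0,C_1)=1$, it identifies the semi-adjacent minor with the signed continuant polynomial $q(\alpha_1,\ldots,\alpha_n)$ in the \emph{column} linearization coefficients of the columns lying strictly between $C$ and $C'$ --- a quantity that visibly does not involve the rows at all. You instead invoke the \emph{row} linearization $u_{y-1}+u_{y+1}=\beta_y u_y$ and telescope: the determinant annihilates the $\beta_y u_y$ term, giving $m(y)=m(y-1)$, hence constancy. Your justification of the row linearization is sound --- the transpose of a tame $SL_2$-tiling is again a tame $SL_2$-tiling, since transposition preserves the adjacent $2\times 2$ minors and the rank, and indeed the paper itself remarks after Proposition \ref{threecolumns} that the definition applies to rows as well; note only that both arguments equally require tameness (yours through the row recursion, the paper's through Lemma \ref{linearcontinuant}), which is available here by Corollary \ref{lincoeff}. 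Your argument is shorter and more elementary, dispensing with continuants entirely. What the paper's computation buys, however, is the explicit value of the constant: the identification of the semi-adjacent minor with $q(\alpha_1,\ldots,\alpha_n)$ is reused later in the paper --- the proof of the proposition on quadratic forms in Subsection \ref{quad} explicitly recalls this identification to compute the entry $q$ of $\mu(u)$ --- so while your proof fully establishes the stated proposition, the quantitative refinement contained in the paper's proof would still need to be supplied separately for that later application.
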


This will be proved in the next section.

\subsection{Continuants polynomials}
The {\em signed continuant polynomials} are defined as follows. Let $a_1,\ldots,a_n$ be elements of some ring. Define for $n\geq 1$, 
\begin{equation}\label{RecursionContinuant}
q_n(a_1,\ldots,a_n)=q_{n-1}(a_1,\ldots,a_{n-1})a_n-q_{n-2}(a_1,\ldots,a_{n-2}),
\end{equation}
setting $q_{-1}:=0$ and $q_0:=1$. We omit indices when possible, writing simply $q(a_1,\ldots,a_n)$ for $q_n(a_1,\ldots,a_n)$.                       
Let us now consider the particular $SL_{2}$ matrices
    $${Q(a):=\begin{pmatrix} 0 & -1\\ 1 & a \end{pmatrix}}.$$
One has the following result, see \cite{BeR}  8.1.  

\begin{lemma} 
\begin{equation}\label{matricecontinuants}
     {Q(a_1) Q(a_2) \cdots Q(a_n)= \begin{pmatrix} -q(a_2,\ldots,a_{n-1}), & -q(a_2,\ldots,a_{n})\\
              q( a_1,\ldots,a_{n-1}), & q(a_1,\ldots,a_{n})\end{pmatrix}}.
\end{equation}
\end{lemma}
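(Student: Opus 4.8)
The plan is to prove the identity by induction on $n$, multiplying the accumulated product on the \emph{right} by $Q(a_{n+1})$. The reason for choosing right multiplication is that the continuant recursion (\ref{RecursionContinuant}) grows in its last argument, so appending $Q(a_{n+1})$ should introduce $a_{n+1}$ as a new final index exactly as the recursion prescribes. First I would dispose of the base case $n=1$, where $Q(a_1)=\begin{pmatrix} 0 & -1\\ 1 & a_1\end{pmatrix}$. Reading the right-hand side of (\ref{matricecontinuants}) with the conventions $q_{-1}=0$ and $q_0=1$, the range $q(a_2,\ldots,a_{n-1})$ becomes a length $-1$ continuant, hence $q_{-1}=0$; the ranges in $-q(a_2,\ldots,a_n)$ and $q(a_1,\ldots,a_{n-1})$ are empty, hence equal $q_0=1$; and $q(a_1,\ldots,a_n)=q_1(a_1)=a_1$. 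Thus the right-hand side is exactly $Q(a_1)$.

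For the inductive step, assuming the formula for some $n\geq 1$, I would compute
$$Q(a_1)\cdots Q(a_n)\,Q(a_{n+1})=\begin{pmatrix} -q(a_2,\ldots,a_{n-1}) & -q(a_2,\ldots,a_{n})\\ q(a_1,\ldots,a_{n-1}) & q(a_1,\ldots,a_{n})\end{pmatrix}\begin{pmatrix} 0 & -1\\ 1 & a_{n+1}\end{pmatrix}.$$
Carrying out the multiplication column by column, the first column of the new matrix is just the second column of the old one (since the first column of $Q(a_{n+1})$ is $(0,1)^t$), which correctly shifts the index window. The second column is $a_{n+1}$ times the old second column minus the old first column, and this is precisely the right-hand side of (\ref{RecursionContinuant}): in the top row it combines $q(a_2,\ldots,a_n)$ and $q(a_2,\ldots,a_{n-1})$ into $q(a_2,\ldots,a_{n+1})$, and in the bottom row it combines $q(a_1,\ldots,a_n)$ and $q(a_1,\ldots,a_{n-1})$ into $q(a_1,\ldots,a_{n+1})$. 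The resulting matrix is exactly (\ref{matricecontinuants}) with $n$ replaced by $n+1$, which closes the induction.

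I do not expect a genuine obstacle here: the computation is a routine $2\times 2$ product, and the structure of $Q(a_{n+1})$ mirrors the continuant recursion term for term. The only delicate point, and the one I would take care to state explicitly, is the bookkeeping of the degenerate continuants at the endpoints of the ranges in the base case, where one must invoke the conventions $q_{-1}=0$ and $q_0=1$ to match the literal entries of $Q(a_1)$; once these are fixed, everything else is bookkeeping.
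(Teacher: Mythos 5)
Your proof is correct: the base case bookkeeping with $q_{-1}=0$, $q_0=1$ is handled properly, and the right-multiplication step reproduces the recursion (\ref{RecursionContinuant}) exactly, both for the top row (indices starting at $a_2$) and the bottom row. The paper leaves this lemma to the reader, and your induction is precisely the standard argument intended there, so there is nothing to add.
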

The proof of this lemma is left to the reader. We have therefore $$Q(a_{i+1})\ldots Q(a_n)=(Q(a_1)\ldots Q(a_i))^{-1}Q(a_1)\ldots Q(a_n).$$ Since the matrix to be inverted is in $SL_2$, by taking the 2,2-entry of both sides, we obtain, when the ring is commutative,

\begin{equation}\label{ident}
q(a_{i+1},\ldots,a_n)=q(a_1,\ldots,a_{i-1})q(a_2,\ldots,a_n)-q(a_2,\ldots,a_{i-1})q(a_1,\ldots,a_n).
\end{equation}

\begin{lemma}\label{linearcontinuant}
Let $t$ be a tame $SL_2$-tiling of the plane and $C_0,\ldots,C_{n+1}$ successive columns of $t$, with linearization coefficients $\alpha_0,\ldots,\alpha_{n+1}$. Then for any $i$ in $\{1,\ldots,n\}$
$$
q(\alpha_{i+1},\ldots,\alpha_n)C_0+q(\alpha_1,\ldots,\alpha_{i-1})C_{n+1}=q(\alpha_1,\ldots,\alpha_n)C_i.
$$
\end{lemma}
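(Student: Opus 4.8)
The plan is to recognise the three–column relation as the continuant recurrence and then to transport the asserted identity along the chain of columns by induction.

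First I would reformulate the hypothesis. By Proposition \ref{threecolumns} (which uses tameness), three successive columns satisfy $C_{j-1}+C_{j+1}=\alpha_j C_j$, that is $C_{j+1}=\alpha_j C_j-C_{j-1}$ for $1\le j\le n$, which is exactly the recurrence (\ref{RecursionContinuant}) defining the signed continuants. So the whole lemma says that when $C_i$ is written as a combination of the two end columns $C_0$ and $C_{n+1}$, the coefficients are the continuants $q(\alpha_{i+1},\ldots,\alpha_n)$ and $q(\alpha_1,\ldots,\alpha_{i-1})$, before clearing the common factor $q(\alpha_1,\ldots,\alpha_n)$.

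Next I would assemble the continuant identities needed. The recurrence (\ref{RecursionContinuant}) peels off the \emph{last} argument directly. To peel off the \emph{first} argument I would first record the reversal symmetry $q(a_1,\ldots,a_n)=q(a_n,\ldots,a_1)$: transposing (\ref{matricecontinuants}) reverses the order of the factors $Q(a_k)$, and since $Q(a)^{\mathrm T}=JQ(a)J$ with $J=\mathrm{diag}(1,-1)$, while conjugation by $J$ and transposition both preserve diagonal entries, comparing the $(2,2)$ entries of the two sides gives the symmetry. With both peeling recurrences in hand, the two families $g_i:=q(\alpha_{i+1},\ldots,\alpha_n)$ and $h_i:=q(\alpha_1,\ldots,\alpha_{i-1})$ are seen to satisfy the \emph{same} recurrence $x_{j+1}=\alpha_j x_j-x_{j-1}$ as the columns, with boundary values $g_n=1,\ g_{n+1}=0$ and $h_0=0,\ h_1=1$.

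The induction engine is then the implication $S(i-1)\wedge S(i)\Rightarrow S(i+1)$, where $S(i)$ denotes the asserted identity: substituting $C_{i+1}=\alpha_i C_i-C_{i-1}$ and collecting the coefficients of $C_0$ and of $C_{n+1}$, the required simplifications are precisely the peel–first identity $g_{i+1}=\alpha_i g_i-g_{i-1}$ and the peel–last identity $h_{i+1}=\alpha_i h_i-h_{i-1}$. One checks moreover that $S(0)$ and $S(n+1)$ hold tautologically, from $g_{n+1}=h_0=0$ and $g_0=h_{n+1}=q(\alpha_1,\ldots,\alpha_n)$.

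The main obstacle is the base of this induction. The two relations that come for free, $S(0)$ and $S(n+1)$, sit at opposite ends of the chain, whereas the propagation is second order, so two \emph{consecutive} instances are needed to start it; knowing a second–order solution at two non-adjacent points forces vanishing only when the associated two–point problem is nondegenerate, i.e. when $q(\alpha_1,\ldots,\alpha_n)$ is invertible (over a field, nonzero) — the viewpoint in which tameness makes the columns span a rank–two module with $C_0,C_{n+1}$ a basis. To keep the argument clean and division–free I would instead establish the single ``global'' instance $S(1)$ directly, by induction on the number $n$ of interior columns: the case $n=1$ is exactly the defining relation $C_0+C_2=\alpha_1 C_1$ of (\ref{lin}), and the inductive step substitutes $C_{n+1}=\alpha_n C_n-C_{n-1}$ and feeds in $S(1)$ for the two shorter chains $C_0,\ldots,C_n$ and $C_0,\ldots,C_{n-1}$, the bookkeeping being absorbed by the last–argument recurrence of the continuants. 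With $S(0)$ and $S(1)$ secured, the forward implication yields $S(i)$ for every $i=1,\ldots,n$.
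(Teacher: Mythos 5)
Your proof is correct, but it takes a genuinely different route from the paper's. The paper first establishes the matrix identity $(C_0,C_1)Q(\alpha_1)\cdots Q(\alpha_j)=(C_j,C_{j+1})$ (Eq.(\ref{matrixcolumns})) and reads off from Lemma (\ref{matricecontinuants}) the expansion $C_j=-q(\alpha_2,\ldots,\alpha_{j-1})C_0+q(\alpha_1,\ldots,\alpha_{j-1})C_1$ (Eq.(\ref{columns})); it then settles $i=1$ by direct substitution, and for $i>1$ invokes the three-term identity (\ref{ident}), itself obtained by inverting a product of the $SL_2$ matrices $Q(a_k)$. Your instance $S(1)$, rearranged, is exactly Eq.(\ref{columns}) for $j=n+1$, which you prove by induction on chain length instead of via the matrix product; and your forward second-order induction in $i$, driven by the observation that $g_i=q(\alpha_{i+1},\ldots,\alpha_n)$ and $h_i=q(\alpha_1,\ldots,\alpha_{i-1})$ satisfy the same recurrence $x_{i+1}=\alpha_i x_i-x_{i-1}$ as the columns, replaces identity (\ref{ident}) entirely --- in effect your induction re-derives that identity coefficientwise, using only the peel-last recurrence (\ref{RecursionContinuant}) and the peel-first recurrence you extract from the reversal symmetry $q(a_1,\ldots,a_n)=q(a_n,\ldots,a_1)$, whose derivation via $Q(a)^{\mathrm T}=JQ(a)J$ with $J=\mathrm{diag}(1,-1)$ is sound. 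Your diagnosis of the base-case obstacle is also accurate: $S(0)$ and $S(n+1)$ alone cannot seed a second-order recursion unless $q(\alpha_1,\ldots,\alpha_n)$ is invertible, so anchoring at $S(1)$ is the right move and keeps the argument division-free, hence valid over any commutative coefficient ring, whereas the two-nonconsecutive-points argument would quietly need the field structure and nonvanishing of $q(\alpha_1,\ldots,\alpha_n)$. What the paper's route buys is brevity, given that Lemma (\ref{matricecontinuants}) is already in hand, plus the reusable identity (\ref{ident}); what yours buys is a self-contained, more elementary argument that never leaves the recurrence itself.
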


\begin{proof}
We use the identity 
\begin{equation}\label{columns}
C_j=-q(\alpha_2,\ldots, \alpha_{j-1})C_0+q(\alpha_1,\ldots,\alpha_{j-1})C_1,
\end{equation}
which is proved as follows. First, we have by definition of the linearization coefficients,
$C_{j+2}=-C_j+\alpha_{j+1}C_{j+1}$. This implies that $(C_j,C_{j+1})Q(\alpha_{j+1})=(C_{j+1},C_{j+2})$. It follows that for any natural number $j$, one has 
\begin{equation}\label{matrixcolumns}
(C_0,C_1)Q(\alpha_1)\ldots Q(\alpha_j)=(C_j,C_{j+1}),
\end{equation}
We conclude by using Eq.(\ref{matricecontinuants}).

Suppose first that $i=1$. Then, with Eq.(\ref{columns}),
$$
q(\alpha_{2},\ldots,\alpha_n)C_0+C_{n+1}=q(\alpha_{2},\ldots,\alpha_n)C_0-q(\alpha_2,\ldots, \alpha_{n})C_0+q(\alpha_1,\ldots,\alpha_{n})C_1$$
$$=q(\alpha_1,\ldots,\alpha_{n})C_1,
$$
which proves the identity of the lemma for $i=1$. Suppose now that $i>1$. We obtain, with Eq.(\ref{columns}), Eq.(\ref{ident}) and Eq.(\ref{columns}) again,
$$
q(\alpha_{i+1},\ldots,\alpha_n)C_0+q(\alpha_1,\ldots,\alpha_{i-1})C_{n+1}$$
$$=q(\alpha_{i+1},\ldots,\alpha_n)C_0+q(\alpha_1,\ldots,\alpha_{i-1})(-q(\alpha_2,\ldots, \alpha_{n})C_0+q(\alpha_1,\ldots,\alpha_{n})C_1)
$$
$$=(q(\alpha_{i+1},\ldots,\alpha_n)-q(\alpha_1,\ldots,\alpha_{i-1})q(\alpha_2,\ldots, \alpha_{n}))C_0$$
$$+q(\alpha_1,\ldots,\alpha_{i-1})q(\alpha_1,\ldots,\alpha_{n})C_1
$$
$$=q(\alpha_2,\ldots,\alpha_{i-1})-q(\alpha_1,\ldots,\alpha_n)C_0+q(\alpha_1,\ldots,\alpha_{i-1})q(\alpha_1,\ldots,\alpha_{n})C_1
$$
$$=q(\alpha_1,\ldots,\alpha_{n})(-q(\alpha_2,\ldots,\alpha_{i-1})C_0+q(\alpha_1,\ldots,\alpha_{i-1})C_1)
$$
$$=q(\alpha_1,\ldots,\alpha_{n})C_i.
$$
\end{proof}

\begin{proof} (Proposition \ref{semi-adjacent})
Let $\alpha_1,\ldots,\alpha_n$ be the linearization coefficients of the successive columns lying strictly between $C$ and $C'$. We show that the semi-adjacent 2 by 2 minor associated to $C,C'$ and to two adjacent rows is equal to the continuant polynomial $q(\alpha_1,\ldots,\alpha_n)$. This will prove the proposition. Denote by $C_1,\ldots,C_{n}$ the columns of length 2 obtained by restricting the $n$ columns above to the given rows, and similarly denote $C_0,C_{n+1}$ the same restriction for $C$ and $C'$.

Now we have by Lemma \ref{linearcontinuant}, $q(\alpha_2,\ldots,\alpha_n)C_0+C_{n+1}=q(\alpha_1,\ldots,\alpha_n)C_1$.
Thus $$\det(C_0,C_{n+1})=q(\alpha_1,\ldots,\alpha_n)\det(C_0,C_1)=q(\alpha_1,\ldots,\alpha_n),$$ since $t$ is an $SL_2$-tiling and therefore $det(C_0,C_1)=1$.
This ends the proof.
\end{proof}

\subsection{The path model}\label{pathmodel}
Consider an $SL_2$-tiling $t$ associated to an admissible frontier, as in Theorem \ref{tiling-variables}. We call {\em fringe} the set of points on the path together with their translate by the vector $(1,1)$. We consider below discrete paths with steps $(1,0)$ and $(0,-1)$, cf. Figure \ref{coord}.

\begin{proposition}\label{paths}
Let $P$ be a point below the frontier and $A,B$ be its horizontal and vertical projection on the frontier. Then $t(P)$ is equal to the number of paths from $A$ to $B$ which remain in the fringe.
\end{proposition}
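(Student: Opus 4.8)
The plan is to reduce the statement to a transfer-matrix count and then read off the answer from Corollary \ref{t(P)=mu(w)}. Write the word of $P$ as $w=x_1\cdots x_n$ with $x_i\in\{x,y\}$, and let $Q_0=A,Q_1,\ldots,Q_n=B$ be the successive lattice points of the frontier from $A$ to $B$, so that the segment $Q_{k-1}Q_k$ carries the letter $x_k$; since $P$ lies below the frontier, $w$ begins with $y$ and ends with $x$. I would call the points $Q_k$ the \emph{frontier points} and the points $Q_k+(1,1)$ the \emph{translate points}; by definition these are exactly the fringe points met between $A$ and $B$. Using that the frontier is a monotone staircase (with $x$-steps $(1,0)$ and $y$-steps $(0,-1)$, so that along increasing index $x$ never decreases and $y$ never increases), one checks that no frontier point equals a translate point and that each fringe point sits at a well-defined position $k\in\{0,\ldots,n\}$.

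The heart of the argument is a purely local computation at one frontier segment. First I would record, for a partial path arriving at position $k$, whether it sits at the frontier point (call this state $2$) or at the translate point (state $1$). A case analysis of the four local configurations, namely $x_k\in\{x,y\}$ times the two incoming states, shows two things: every admissible step $(1,0)$ or $(0,-1)$ between fringe points goes from a position-$(k-1)$ point to a position-$k$ point, and the resulting transition weights from position $k-1$ to position $k$ are exactly the entries of $\mu(x_k)$. Concretely, reading $x=(1,0)$ forbids the frontier-to-translate step, i.e.\ the transition $2\to1$, so the weights are those of $\mu(x)=\left(\begin{smallmatrix}1&1\\0&1\end{smallmatrix}\right)$; reading $y=(0,-1)$ forbids the translate-to-frontier step, the transition $1\to2$, giving $\mu(y)=\left(\begin{smallmatrix}1&0\\1&1\end{smallmatrix}\right)$.

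With this in hand I would count by induction on $k$. Let $n_k=(N_k^{(1)},N_k^{(2)})$ be the row vector whose entries count the fringe paths from $A$ to the translate point and to the frontier point at position $k$, matching states $1$ and $2$. Since $A=Q_0$ is the frontier point at position $0$ while the translate point there, $A+(1,1)$, cannot be a start, we have $n_0=(0,1)$; and because every step into a position-$k$ point comes from a position-$(k-1)$ point with the weights just computed, the recursion is $n_k=n_{k-1}\,\mu(x_k)$. Hence $n_n=(0,1)\,\mu(x_1)\cdots\mu(x_n)=(0,1)\,\mu(w)$. As $B=Q_n$ is the frontier point at position $n$ (state $2$), the number of fringe paths from $A$ to $B$ is $n_n\left(\begin{smallmatrix}0\\1\end{smallmatrix}\right)=\mu(w)_{22}$, which equals $t(P)$ by Corollary \ref{t(P)=mu(w)}.

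The only real subtlety, and the step I would treat most carefully, is the bookkeeping of the coordinate conventions ($y$-axis downwards, translation by $(1,1)$, and the meaning of ``below the frontier''): these are what fix which lane plays the role of which state, and thereby guarantee that the transfer matrix on reading $x_k$ comes out as $\mu(x_k)$ rather than with $x$ and $y$ interchanged, and that the boundary vector is $(0,1)$ at both ends so that the count is precisely the $(2,2)$ entry. I would double-check the conventions on the shortest case $w=yx$, where $\mu(w)_{22}=2$ and there are visibly exactly two staircase paths from $A$ to $B$ (``right then up'' passing through the translate point, and ``up then right'' passing through the frontier point), in agreement with the formula.
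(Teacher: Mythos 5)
Your argument is correct, but it is worth noting that the paper itself does not prove Proposition \ref{paths}: it leaves the direct argument as Exercise \ref{pathex} and refers to \cite{BeR}, Proposition 7, where the result is obtained from the Gessel--Viennot theory of non-intersecting paths and the interpretation of minors of the tiling as path counts. Your transfer-matrix proof is therefore a genuinely different (and complete) route --- essentially the intended solution to Exercise \ref{pathex}. Its key points all check out: by the staircase monotonicity of the frontier (abscissas non-decreasing, ordinates non-increasing along the word) no frontier point coincides with a $(1,1)$-translate; every step of a fringe path advances the position along the frontier by exactly one, so paths can neither stall nor skip positions and the boundary vector $(0,1)$ at the start is justified ($A+(1,1)$ is unreachable from $A$); and the local case analysis does give the adjacency matrices correctly, an $x$-segment forbidding exactly the frontier-to-translate transition, yielding $\mu(x)=\left(\begin{smallmatrix}1&1\\0&1\end{smallmatrix}\right)$, and a $y$-segment forbidding exactly the translate-to-frontier transition, yielding $\mu(y)=\left(\begin{smallmatrix}1&0\\1&1\end{smallmatrix}\right)$. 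The count is then $(0,1)\,\mu(w)\left(\begin{smallmatrix}0\\1\end{smallmatrix}\right)=\mu(w)_{22}$, which equals $t(P)$ by Corollary \ref{t(P)=mu(w)}, as you say. Comparing the two approaches: yours is elementary and self-contained within the paper's toolkit, but it leans on the explicit word formula $t(P)=\mu(w)_{22}$, so it is tied to the special case where all frontier variables equal $1$; the Gessel--Viennot argument of \cite{BeR} does not use that formula, explains conceptually why not only the entries but all minors of the tiling count families of non-intersecting paths in the fringe, and adapts to more general situations (e.g.\ $SL_k$-tilings), at the cost of invoking heavier combinatorial machinery.
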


As an example, look at the figure below. Consider the paths from $A$ (the southwest 1) to $B$ (the northeast 1) that remain in the fringe; one of them passes through 7, and the others pass through $M$ (the boldfaced 1): they decompose as a product of two paths, with 3 choices for the first and 2 for the second; hence there are $1+2\times 3=7$ paths. 

$$\begin{array}{rrrrrrrrrrrrrrrr}
&&1&1\\
1&1&\bf 1&2\\
1&2&3&7\\
&&&&&&&&&
\end{array}
$$

The proposition may be proved directly (Exercise \ref{pathex}). A proof may be found in \cite{BeR} (Proposition 7): it uses the Gessel-Viennot theory of non-intersecting paths and corresponding minors.

\subsection{Quadratic forms}\label{quad}

Given a matrix $M=\left(\begin{array}{cc}a&b\\c&d\end{array}\right)$, associate to it as usually the bilinear form, in general not symmetric,
\begin{equation}\label{form}
B((x,y),(x',y'))=(x,y)M\left(\begin{array}{cc}x'\\y'\end{array}\right)=axx'+bxy'+cyx'+dyy'
\end{equation}
and the associated quadratic form
$$Q(x,y)=B((x,y),(x,y))=(x,y)M\left(\begin{array}{cc}x\\y\end{array}\right)=ax^2+(b+c)xy+dy^2.$$
Recall that the {\em discriminant} of $Q(x,y)$ is the discriminant of the degree 2 equation $Q(x,1)=0$, that is, $(b+c)^2-4ad$. If $M$ is of determinant 1, we have $ad=1+bc$ so that the discriminant of $Q$ is $b^2+2bc+c^2-4bc-4=(b-c)^2-4$.

Given a finite or infinite word $w$ on the alphabet $\{x,y\}$, we call {\em transpose} of $w$, and denote it by $^tw$, the word obtained by reversing it and exchanging $x$ and $y$. For instance, $^t(xyyxy)=xyxxy$. If $w$ is a right infinite word, then its transpose is a left infinite word.

We consider an admissible frontier of the form ${}^tsws$, where $w$ (resp. $s$) is a finite (resp. right infinite) word on $\{x,y\}$. We shall see that certain values of the corresponding tiling $t$ are values of the quadratic form associated to $\mu(w)$. 

\begin{figure}
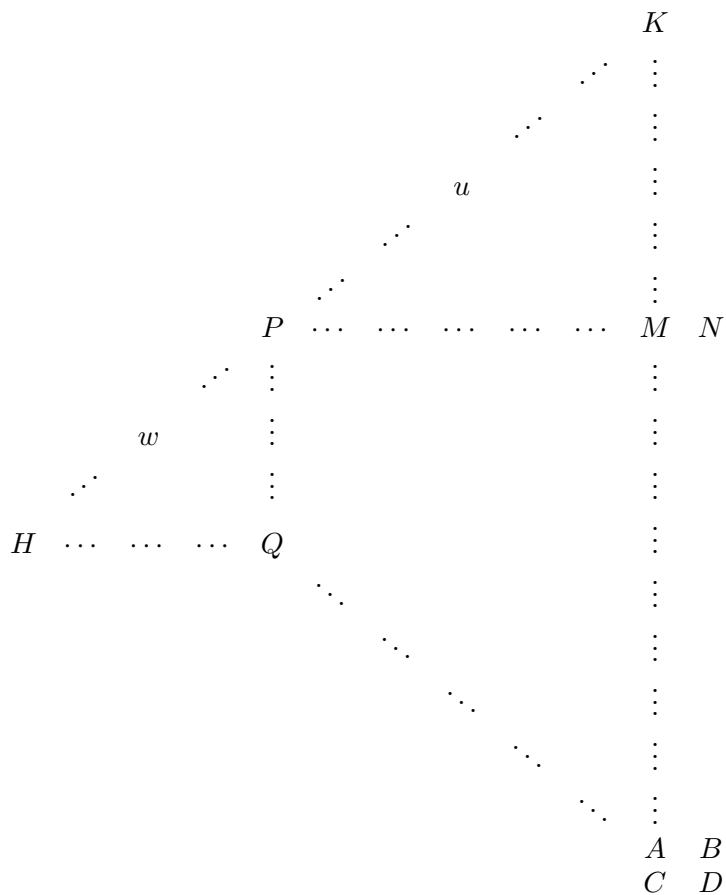

$$
\begin{array}{cccccccccccccccccccc}
&&&&&&&&&&K\\
&&&&&&&&&\iddots&\vdots\\
&&&&&&&&\iddots&&\vdots\\
&&&&&&&u&&&\vdots\\
&&&&&&\iddots&&&&\vdots  \\
&&&&&\iddots&&&&&\vdots  \\
&&&&P&\cdots&\cdots&\cdots&\cdots&\cdots&M&N  \\
&&&\iddots&\vdots&&&&&&\vdots\\
&&w&&\vdots&&&&&&\vdots  \\
&\iddots&&&\vdots&&&&&&\vdots   \\
H&\cdots&\cdots&\cdots&Q&&&&&&\vdots  \\
&&&&&\ddots&&&&&\vdots\\
&&&&&&\ddots&&&&\vdots\\
&&&&&&&\ddots&&&\vdots&\\
&&&&&&&&\ddots&&\vdots&&\\
&&&&&&&&&\ddots&\vdots&&&\\
&&&&&&&&&&A&B \\
&&&&&&&&&&C&D\\
\end{array}
$$
\caption{The chosen points on the plane}\label{points}
\end{figure}

We define for this several points determined by the frontier. First, let $Q$ be the lower right corner of the smallest rectangle containing the part $w$ of the frontier; let $P$ the upper right corner of this rectangle and $H$ its lower left corner, so that $w$ is the path from $H$ to $P$. Let $M,N$ be two successive points on the horizontal half-line starting from $P$ to the right, and $A,D$ be two successive points on the diagonal half-line starting from $Q$ to the south-east, with $A$ and $M$ on the same vertical line; let $B,C$ the points such that $A,B,C,D$ form an adjacent 2 by 2 submatrix of $t$. Let $u$ be the word associated to $M$ and $v$ the word associated to $N$. See Figure \ref{points}, where $w$ is the path on the frontier from $H$ to $P$ and $u$ from $P$ to $K$.

\begin{proposition}
Let $M=\mu(w)$ and consider the quadratic form $Q$ and the bilinear form $B$ associated to $M$ as above. Let $\mu(u)=\left(\begin{array}{cc}p&q\\r&s\end{array}\right)$ and $\mu(v)=\left(\begin{array}{cc}p'&q'\\r'&s'\end{array}\right)$.
One has 
$$\left(\begin{array}{cc}t(A)&t(B)\\t(C)&t(D)\end{array}\right)=\left(\begin{array}{cc}Q(q,s)&B((q,s),(q',s'))\\B((q',s'),(q,s))&Q(q',s')\end{array}\right).$$
Moreover $b-c=t(C)-t(B)$, $s=t(M)$ and $q$ is equal to the semi-adjacent $2$ by $2$ minor determined by the columns of $P$ and that of $M$. 
\end{proposition}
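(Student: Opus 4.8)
The plan is to reduce everything to the matrix formula $t(P)=\mu(w)_{22}$ of Corollary \ref{t(P)=mu(w)} by first identifying, combinatorially, the words attached to the six relevant points, and then doing the resulting $2\times2$ matrix algebra. First I would fix coordinates with $P=(0,0)$, so that the rectangle carrying $w$ has $H=(-\ell,m)$ and $Q=(0,m)$, where $\ell,m$ are the numbers of $x$'s and $y$'s in $w$; then $M=(k,0)$, $N=(k+1,0)$, $A=(k,m+k)$, $B=(k+1,m+k)$, $C=(k,m+k+1)$, $D=(k+1,m+k+1)$, where $k$ is the number of $x$'s in $u$. The crucial algebraic input is the transpose identity $\mu({}^tw)=\mu(w)^{t}$, which follows at once from $\mu({}^tx)=\mu(x)^{t}$, $\mu({}^ty)=\mu(y)^{t}$ and the reversal of letters in $\,{}^tw$. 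Since $u$ is a prefix of $s$, writing $s=u\,s'''$ gives ${}^ts=({}^ts''')\,{}^tu$, so that $\,{}^tu$ is exactly the suffix of $\,{}^ts$ adjacent to $H$; likewise $\,{}^tv$ is the next-longer such suffix, because the word recursion of Figure \ref{words} forces $v=u\,y^{l}x$, whence ${}^tv=y\,x^{l}\,{}^tu$.

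The geometric heart, and the step I expect to be the main obstacle, is to check that the words associated to $A,B,C,D$ are respectively ${}^tu\,w\,u$, ${}^tu\,w\,v$, ${}^tv\,w\,u$, ${}^tv\,w\,v$. The vertical projections of $A,C$ land at the same frontier point $K$ (top of the prefix $u$), and those of $B,D$ at the top of $v$; the horizontal projections of $A,B$ coincide (same row $y=m+k$, giving the suffix $\,{}^tu$), while those of $C,D$ coincide (row $y=m+k+1$, giving $\,{}^tv$). I would verify these projections by the monotone staircase structure of the frontier together with the fact that every word of a point below the frontier has the form $y\cdots x$; the displacement count ($\,{}^tu$ has $b$ $x$'s and $k$ $y$'s, etc.) pins down each projection point exactly. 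Once the words are known, Corollary \ref{t(P)=mu(w)} and the transpose identity give $t(A)=[\mu(u)^{t}M\mu(u)]_{22}=(q,s)M\binom{q}{s}=Q(q,s)$, since $(0,1)\mu(u)^{t}=(q,s)$ and $\mu(u)\binom{0}{1}=\binom{q}{s}$; the same computation with $\mu(v)$ in the appropriate slots yields $t(B)=B((q,s),(q',s'))$, $t(C)=B((q',s'),(q,s))$, $t(D)=Q(q',s')$, which is the asserted matrix identity.

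For the \emph{moreover} statements, $s=t(M)$ is immediate from $t(M)=\mu(u)_{22}=s$. For $b-c=t(C)-t(B)$ I would expand both bilinear forms and get $t(C)-t(B)=(b-c)(q's-qs')$; the relation $v=u\,y^{l}x$ gives $\mu(v)=\mu(u)\left(\begin{smallmatrix}1&1\\ l&l+1\end{smallmatrix}\right)$, so $q'=p+q(l+1)$, $s'=r+s(l+1)$ and hence $q's-qs'=ps-qr=\det\mu(u)=1$, giving $b-c=t(C)-t(B)$. Finally, for the last clause I would compute the semi-adjacent minor on the columns of $P$ and $M$ using the rows $m+k$ and $m+k+1$, which by Proposition \ref{semi-adjacent} is independent of that choice. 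Here the words of the two points on the column of $P$ are ${}^tu\,w$ and ${}^tv\,w$ (the same words as for $A,C$ with the final $u$ deleted, since their common vertical projection is now $P$ itself), so $t(0,m+k)=(q,s)M\binom{0}{1}$ and $t(0,m+k+1)=(q',s')M\binom{0}{1}$. Substituting the four values into the determinant $t(0,m+k)\,t(C)-t(0,m+k+1)\,t(A)$ and simplifying with $ad-bc=1$ and $q's-qs'=1$ collapses it to $q(q's-qs')=q$, as required.
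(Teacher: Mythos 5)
Your treatment of the displayed matrix identity, of $b-c=t(C)-t(B)$, and of $s=t(M)$ coincides with the paper's proof: the same four words ${}^tuwu$, ${}^tuwv$, ${}^tvwu$, ${}^tvwv$ (which the paper asserts ``by inspection of the figure'' and you justify more carefully via the staircase structure and the suffix/prefix symmetry of the frontier ${}^tsws$), the same transpose identity $\mu({}^tw)={}^t\mu(w)$ turning Corollary \ref{t(P)=mu(w)} into the values of $Q$ and $B$, and the same computation $q's-qs'=\det\mu(u)=1$ from $v=uy^lx$. Where you genuinely diverge is the final clause about the semi-adjacent minor. The paper argues structurally: it shows that along a horizontal line the quantity $\mu(\text{word})_{12}$ satisfies the three-term recursion $q(P)=q(N)\alpha-q(M)$, with $\alpha$ the linearization coefficient of the middle column (computed via Corollary \ref{lincoeff}), and then matches this against the recursion (\ref{RecursionContinuant}) for signed continuants, which by the proof of Proposition \ref{semi-adjacent} compute semi-adjacent minors; the matching of initial conditions is left implicit. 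You instead evaluate the minor once, on the two rows through $A$ and $C$, computing the column-of-$P$ entries as $(q,s)M\left(\begin{smallmatrix}0\\1\end{smallmatrix}\right)$ and $(q',s')M\left(\begin{smallmatrix}0\\1\end{smallmatrix}\right)$ and collapsing the determinant by hand; this is correct, and indeed the determinant factors as $-\det\left(\left(\begin{smallmatrix}q&s\\q'&s'\end{smallmatrix}\right)M\left(\begin{smallmatrix}q&0\\s&1\end{smallmatrix}\right)\right)=(q's-qs')q=q$. Your route is more direct and self-contained, avoiding the continuant machinery and the unstated initial-condition check, with Proposition \ref{semi-adjacent} invoked only for row-independence, which is exactly what makes the claim well-posed.

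One slip to patch: your words ${}^tuw$ and ${}^tvw$ for the two points in the column of $P$ are correct only when $w$ ends with $x$, so that $P$ is a valley corner of the frontier. The proposition allows $w$ to end with $y$ (Corollary \ref{square} uses $w=yx^hy$, and its proof explicitly adjusts the words for this reason); in that case the vertical projection lands at the bottom of the column-$0$ run, below $P$, and the words are ${}^tuw_0$ and ${}^tvw_0$ where $w=w_0y^c$. The slip is harmless because $\mu(y)$ fixes $\left(\begin{smallmatrix}0\\1\end{smallmatrix}\right)$, so $\mu(w_0)\left(\begin{smallmatrix}0\\1\end{smallmatrix}\right)=M\left(\begin{smallmatrix}0\\1\end{smallmatrix}\right)$ and your computed values, hence the conclusion, are unchanged — but as written your argument needs this remark.
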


As an example, look at the tiling below: the word $w=yx^2yx$ in the frontier has been indicated by the boldfaced 1's. The rightmost such $1$ is the point $P$. The matrix $\mu(w)$ is equal to $\left(\begin{array}{cc}a&b\\c&d\end{array}\right)=\left(\begin{array}{cc}3&5\\4&7\end{array}\right)$. The quadratic form is $Q(x,y)=3x^2+9xy+7y^2$. Point $Q$ (with 7) and its diagonal are boldfaced; consider the point A corresponding to 691: it is equal to Q(3,8) and one has $1129-1128=1=b-c$.
The point $M$ is the point labelled $8$ in the same column as 691. The word $u$ here is equal to $yyxyx$ and one has $\left(\begin{array}{cc}p&q\\r&s\end{array}\right)=\mu(u)=\left(\begin{array}{cc}2&3\\5&8\end{array}\right)$. One has $s=8$ and $q=3=det\left(\begin{array}{cc}1&5\\1&8\end{array}\right)$.

$$
\begin{array}{cccccccccccccccccccccccccc}
277&117&74&31&19&7&2&1&1&1\\
116&49&31&13&8&3&1&1&2&3\\
71&30&19&8&5&2&1&2&5&8\\
26&11&7&3&2&\bf 1&\bf 1&3&8&13\\
7&3&2&\bf 1&\bf1&\bf1&2&7&19&31\\
2&1&1&\bf1&2&3&\bf7&25&68&111\\
1&1&2&3&7&11&26&\bf93&253&413\\
1&2&5&8&19&30&71&254&\bf691&1128\\
1&3&8&13&31&49&116&415&1129&\bf1843\\
\end{array}
$$

\begin{proof}
By inspection of the figure, it is seen that the words associated to $A,B,C,D$ are ${}^tuwu$, ${}^tuwv$, ${}^tvwu$ and ${}^tvwv$. Thus we have, since $\mu({}^tu)={}^t\mu(u)$,
$$
t(A)=\mu({}^tuwu)_{22}=(0,1){}^t\mu(u)\mu(w)\mu(u)\left(\begin{array}{cc}0\\1\end{array}\right)
$$
$$={}^t(\mu(u) \left(\begin{array}{cc}0\\1\end{array}\right) )M\mu(u)\left(\begin{array}{cc}0\\1\end{array}\right)=(q,s)M\left(\begin{array}{cc}q\\s\end{array}\right)=Q(q,s).$$

Similarly,
$$t(B)=\mu({}^tuwv)_{22}=(0,1){}^t\mu(u)\mu(w)\mu(v)\left(\begin{array}{cc}0\\1\end{array}\right)
$$
$$={}^t(\mu(u) \left(\begin{array}{cc}0\\1\end{array}\right) )M\mu(v)\left(\begin{array}{cc}0\\1\end{array}\right)=(q,s)M\left(\begin{array}{cc}q'\\s'\end{array}\right)=B((q,s),(q',s')).$$

The calculations for $t(C)$ and $t(D)$ are similar. We have by Eq.(\ref{form}, $t(C)-t(B)=B((q',s'),(q,s))-B((q,s),(q',s'))=bq's+cs'q-bqs'-csq'=(b-c)(q's-qs').$ Now, since $M$ and $N$ are adjacent points, we have $v=uy^lx$ for some natural number $l$; thus $$\left(\begin{array}{cc}p'&q'\\r'&s'\end{array}\right)=\mu(v)=\mu(u)\mu(y)^l\mu(x)=\mu(u)\left(\begin{array}{cc}1&0\\l&1\end{array}\right)\left(\begin{array}{cc}1&1\\0&1\end{array}\right)$$
$$=\left(\begin{array}{cc}p&q\\r&s\end{array}\right)\left(\begin{array}{cc}1&1\\l&l+1\end{array}\right)=\left(\begin{array}{cc}p+lq&p+(l+1)q\\r+ls&r+(l+1)s\end{array}\right).$$ Thus $q's-qs'=(p+(l+1)q)s-q(r+(l+1)s)=ps-qr=1$, since $\mu(u)$ has determinant 1.

It remains to prove the assertion about the semi-adjacent minor. We show first that if M,N,P are three successive points on the same horizontal line on the plane, below the frontier, then $q(M)+q(P)=\alpha q(N)$, where $\alpha$ is the linearization coefficient of the column of $N$ and where $q(M)=\mu(u)_{12}$ with $u$ the word associated to $M$. The words associated to $M,N,P$ are $u,uy^lx,uy^lxy^mx$, for some natural numbers $l,m$ and $\alpha=m+2$ by Cor.\ref{lincoeff}. With $\mu(u)=\left(\begin{array}{cc}p&q\\r&s\end{array}\right)$, a simple computation then shows that $q(M)=q$, $q(N)=p+(l+1)q$ and $q(P)=p+lq+(p+(l+1)q)(m+1)$. Thus
$q(M)+q(P)=\alpha q(N)$ or equivalently $q(P)=q(N)\alpha-q(M)$. 

Recall from the proof of Prop.\ref{semi-adjacent} that the semi-adjacent minor of two columns is equal to the signed continuant polynomial of the linearization coefficients of the columns lying strictly between them. The similarity of the previous formula and the recursion formula (\ref{RecursionContinuant}) for the signed continuant polynomials then implies the result.
\end{proof}

\begin{corollary}\label{square}
Suppose that $w=yx^hy$ for some natural number $h$. Let $M'$ (resp. $N'$) be the point immediately below $M$ (resp. $N$). Then $t(A)=(h+1)t(M')^2$ and $t(B)-t(C)=2$. Moreover $t(B)=(h+1)t(M')t(N')+1$, $t(C)=(h+1)t(M')t(N')-1$.
\end{corollary}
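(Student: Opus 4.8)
The plan is to reduce the whole statement to two ingredients: the explicit value of $\mu(w)$ for $w=yx^hy$, and the identification of $t(M')$ and $t(N')$ with the quantities $q+s$ and $q'+s'$ appearing in the preceding proposition. First I would compute
$$\mu(yx^hy)=\mu(y)\mu(x)^h\mu(y)=\begin{pmatrix} h+1 & h\\ h+2 & h+1\end{pmatrix},$$
so that in the notation $\mu(w)=\left(\begin{smallmatrix}a&b\\c&d\end{smallmatrix}\right)$ we have $a=d=h+1$, $b=h$, $c=h+2$. Consequently the associated quadratic form is $Q(x,y)=(h+1)x^2+(2h+2)xy+(h+1)y^2=(h+1)(x+y)^2$, and $b-c=-2$. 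Feeding this into the preceding proposition gives immediately $t(A)=Q(q,s)=(h+1)(q+s)^2$ and $t(C)-t(B)=b-c=-2$, i.e. $t(B)-t(C)=2$.

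Next I would identify $t(M')$ and $t(N')$. The claim is that the word of $M'$ is $yu$ and the word of $N'$ is $yv$, where $u,v$ are the words of $M,N$ and $\mu(u)=\left(\begin{smallmatrix}p&q\\r&s\end{smallmatrix}\right)$, $\mu(v)=\left(\begin{smallmatrix}p'&q'\\r'&s'\end{smallmatrix}\right)$. Granting this, Corollary \ref{t(P)=mu(w)} yields $t(M')=\mu(yu)_{22}=(\mu(y)\mu(u))_{22}=q+s$ and likewise $t(N')=q'+s'$. Combining with the first paragraph, $t(A)=(h+1)(q+s)^2=(h+1)t(M')^2$, which is the first assertion.

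The genuinely geometric point, and the one I expect to be the main obstacle, is the word computation for $M'$ and $N'$. By the word structure recorded in Figure \ref{words}, the point immediately below a point with word $u$ has word $yx^{k}u$ for some $k\ge 0$; I must show $k=0$ here. The key is that $w=yx^hy$ terminates in a $y$-step reaching $P$, sitting directly above the right endpoint of the horizontal run $x^h$, which lies on the row one unit below $P$. By the setup, $M$ projects horizontally onto $P$ (the word $u$ runs from $P$ to the vertical projection $K$ of $M$). The point $M'$, being one row lower, projects horizontally onto that right endpoint of the $x^h$ run, while its vertical projection is still $K$ since $M'$ lies in the column of $M$. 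Hence the frontier arc giving the word of $M'$ is obtained from that of $M$ by prepending exactly the single $y$-step joining the right endpoint of $x^h$ to $P$, so $k=0$ and the word is $yu$; the argument for $N'$ is identical.

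Finally, for the formulas for $t(B)$ and $t(C)$ I would symmetrize the bilinear form. Since $t(B)=B((q,s),(q',s'))$ and $t(C)=B((q',s'),(q,s))$, their sum is governed by $M+M^t=2(h+1)\left(\begin{smallmatrix}1&1\\1&1\end{smallmatrix}\right)$, the symmetric form polarizing $Q$; thus
$$t(B)+t(C)=(q,s)\,(M+M^t)\begin{pmatrix}q'\\ s'\end{pmatrix}=2(h+1)(q+s)(q'+s')=2(h+1)t(M')t(N').$$
Together with $t(B)-t(C)=2$ this gives $t(B)=(h+1)t(M')t(N')+1$ and $t(C)=(h+1)t(M')t(N')-1$, completing the proof; the value $t(B)-t(C)=2$ also recovers the intermediate claim. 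Apart from the geometric word identification, every step is a short linear-algebra computation.
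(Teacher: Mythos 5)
Your proposal is correct, and its first half coincides with the paper's proof: you compute $\mu(yx^hy)=\left(\begin{smallmatrix}h+1&h\\h+2&h+1\end{smallmatrix}\right)$, factor the quadratic form as $(h+1)(x+y)^2$, and identify the words of $M'$ and $N'$ as $yu$ and $yv$, whence $t(M')=q+s$ and $t(N')=q'+s'$ by Corollary \ref{t(P)=mu(w)}; your careful projection argument for $k=0$ is an expanded version of what the paper dispatches with the single remark ``since the last letter of $w$ is $y$'' (both arguments tacitly use the configuration of Figure \ref{points}, in which the frontier turns upward at $P$, so that the word of $M$ indeed begins at $P$). Where you genuinely diverge is the endgame for $t(B)$ and $t(C)$: the paper writes $\mu(v)$ explicitly in terms of $\mu(u)$ via $v=uy^{l}x$ and verifies $t(B)-(h+1)t(M')t(N')=1$ by a long entrywise expansion that invokes $ps-qr=1$, with $t(C)$ then following from $t(C)-t(B)=b-c=-2$. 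You instead polarize: $t(B)+t(C)=(q,s)\bigl(\mu(w)+{}^t\mu(w)\bigr)\left(\begin{smallmatrix}q'\\s'\end{smallmatrix}\right)$, and since $\mu(w)+{}^t\mu(w)=2(h+1)\left(\begin{smallmatrix}1&1\\1&1\end{smallmatrix}\right)$ is a multiple of the rank-one all-ones matrix, the sum is $2(h+1)(q+s)(q'+s')=2(h+1)t(M')t(N')$ with no expansion at all; combining with $t(B)-t(C)=2$, read off from the proposition's identity $b-c=t(C)-t(B)$, yields both formulas simultaneously. Your route is shorter and structurally more transparent --- the factorization $Q=(h+1)(x+y)^2$ and the rank-one symmetrization are two faces of the same phenomenon --- and it quarantines the determinant argument inside the proposition (where $q's-qs'=1$ was already proved), whereas the paper's computation redoes that determinant work by hand; the trade-off is that your proof leans more heavily on the proposition's stated conclusions, while the paper's is self-contained modulo only the matrix of values $t(A),t(B),t(C),t(D)$.
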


As an example, see Figure \ref{tilingN}: we have $h=0$, $w=yy$ (shown by boldfaced 1's on the figure), $Q$ is the lowest point with boldfaced 1, the points $A,B,C,D$ are respectively the ones with $8^2,89,87,11^2$ and the points $M',N'$ are those with $8,11$, in the same column as $8^2$ and $11^2$ respectively.

\begin{proof}
Note that, since the last letter of $w$ is $y$, the words associated to $M',N'$ are respectively $yu$ and $yuy^lx$. We have $$\mu(w)=\mu(y)\mu(x)^h\mu(y)=\left(\begin{array}{cc}1&0\\1&1\end{array}\right)\left(\begin{array}{cc}h+1&h\\1&1\end{array}\right)=\left(\begin{array}{cc}h+1&h\\h+2&h+1\end{array}\right).$$
Thus the quadratic form is $Q(x,y)=(h+1)x^2+(2h+2)xy+(h+1)y^2=(h+1)(x+y)^2$.
This proves that $t(A)=(h+1)(q+s)^2$, with $\mu(u)=\left(\begin{array}{cc}p&q\\r&s\end{array}\right)$. 

Now $t(M')$ is the 2,2-entry of the matrix $$\mu(yu)=\left(\begin{array}{cc}1&0\\1&1\end{array}\right)\left(\begin{array}{cc}p&q\\r&s\end{array}\right)
=\left(\begin{array}{cc}p&q\\p+r&q+s\end{array}\right).$$ Thus $t(A)=(h+1)t(M')^2$.
Moreover, $t(N')$ is the 2,2,-entry of the matrix
$$\mu(yuy^lx)=\left(\begin{array}{cc}p&q\\p+r&q+s\end{array}\right)\left(\begin{array}{cc}1&1\\l&l+1\end{array}\right)$$
$$=\left(\begin{array}{cc}p+ql&p+q(l+1)\\p+r+(q+s)l&p+r+(q+s)(l+1)\end{array}\right).$$
Thus we obtain $t(N')=p+r+(q+s)(l+1)$. By the previous proof $t(B)=B((q,s),(q',s'))=qaq'+qbs'+scq'+sds'=qa(p+(l+1)q)+qb(r+(l+1)s)+sc(p+(l+1)q)+sd(r+(l+1)s)$ $=qap+qbr+scp+sdr+(l+1)(qaq+qbs+scq+sds)$. Thus $t(B)-(h+1)t(M')t(N')=qap+scp+qbr+sdr+(l+1)(qaq+scq+qbs+sds)-(h+1)(q+s)(p+r+(q+s)(l+1))=(h+1)pq+(h+2)ps+hqr+(h+1)rs+(l+1)((h+1)q^2+(h+2)qs+hqs+(h+1)s^2)-(h+1)(q+s)(p+r+(q+s)(l+1))=1$, as shows a computation using that $ps-rq=det(\mu(u))=1$.
\end{proof}

\begin{corollary}\label{pyth}
Given the points $A,B,C,D$ as in the previous corollary, with $h=0$, $(t(A)+t(D),t(D)-t(A),t(B)+t(C))$ is a pythagorean triple, that is, $(t(A)+t(D))^2=(t(D)-t(A))^2+(t(B)+t(C))^2$.
\end{corollary}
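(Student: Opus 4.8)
The plan is to reduce everything to the explicit values already furnished by Corollary \ref{square} and then to recognize the classical parametrization of Pythagorean triples. Set $m=t(M')$ and $n=t(N')$ for brevity. With $h=0$, Corollary \ref{square} gives directly $t(A)=m^2$, $t(B)=mn+1$ and $t(C)=mn-1$. The only value not stated verbatim there is $t(D)$, so the first step is to supply it, exhibiting the symmetry between $A$ and $D$.

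To obtain $t(D)$ I would reuse the computation inside the proof of Corollary \ref{square}. There the quadratic form attached to $\mu(w)=\mu(yy)$ is found to be $Q(x,y)=(h+1)(x+y)^2$, which for $h=0$ is simply $Q(x,y)=(x+y)^2$; and by the preceding Proposition on quadratic forms one has $t(D)=Q(q',s')=(q'+s')^2$, where $\mu(v)=\left(\begin{smallmatrix}p'&q'\\r'&s'\end{smallmatrix}\right)$. Since that same proof computed $q'+s'=p+r+(q+s)(l+1)=t(N')=n$, we conclude $t(D)=n^2$, the exact companion of $t(A)=m^2$.

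With these four values in hand the corollary becomes a pure identity. Indeed $t(A)+t(D)=m^2+n^2$, $t(D)-t(A)=n^2-m^2$ and $t(B)+t(C)=2mn$, so the assertion
$$(t(A)+t(D))^2=(t(D)-t(A))^2+(t(B)+t(C))^2$$
reads $(m^2+n^2)^2=(n^2-m^2)^2+(2mn)^2$, which is Euclid's parametrization and follows at once by expanding both sides to $m^4+2m^2n^2+n^4$.

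There is essentially no obstacle here: the substance lives entirely in the preceding corollary. The single point demanding care is the clean derivation of $t(D)=n^2$; one must make sure the quadratic form is read off for the correct factor $w=yy$ and that the vector $(q',s')$ feeding $Q$ really sums to $t(N')$. Once that $A$--$D$ symmetry is secured, the Pythagorean identity is immediate.
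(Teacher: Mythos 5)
Your proposal is correct and follows exactly the route the paper intends: Corollary \ref{pyth} is stated without proof precisely because it is meant to be the immediate consequence of Corollary \ref{square}, and your careful supplement $t(D)=Q(q',s')=(q'+s')^2=t(N')^2$, read off from the Proposition and the computation $q'+s'=p+r+(l+1)(q+s)=t(N')$ inside the proof of Corollary \ref{square}, fills in the one value the paper leaves implicit. The reduction to the Euclidean identity $(m^2+n^2)^2=(n^2-m^2)^2+(2mn)^2$ is then exactly the intended argument.
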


As an example, look at Figure \ref{tilingN}: one has $(8^2+11^2)=(11^2-8^2)+(87+89)^2$. 

\subsection{Constructions of $SL_2$-tilings}
The following result will serve to associate $SL_2$-tilings to friezes, for any quiver; for example see exercises \ref{E6} and \ref{D7}.

\begin{proposition}\label{4sequences} (Vierfolgenansatz)
Given four two-sided sequences (that is, indexed in $\mathbb Z$) $(a_n),(b_n), (c_n), (d_n)$ in a subring $R$ of $K$, such that the following partial tiling is satisfies the $SL_2$-condition (that is each adjacent $2$ by $2$ minor is equal to $1$):
$$
\begin{array}{cccccccccc}
\ddots&\ddots&\ddots&\ddots\\
&a_{n-1}&b_{n-1}&c_{n-1}&d_{n-1}\\
&&a_{n}&b_{n}&c_{n}&d_{n}\\
&&&a_{n+1}&b_{n+1}&c_{n+1}&d_{n+1}\\
&&&&\ddots&\ddots&\ddots&\ddots
\end{array}
$$
there exist a unique tame $SL_2$-tiling $t$ over $K$ extending it to the whole plane. Moreover, its coefficients are all in $R$.
\end{proposition}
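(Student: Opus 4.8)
The plan is to set coordinates so that the band occupies four consecutive diagonals, to build the tiling over the fraction field by a single linear recurrence running down every column (which makes tameness transparent), and only then to descend to the subring $R$ using vanishing $3\times 3$ minors. After a translation I place $a_n=t(n,n)$, $b_n=t(n+1,n)$, $c_n=t(n+2,n)$, $d_n=t(n+3,n)$, so that the hypothesis is exactly the two families
\[
b_nb_{n+1}-c_na_{n+1}=1,\qquad c_nc_{n+1}-d_nb_{n+1}=1\qquad(n\in\mathbb Z),
\]
these being the only adjacent $2\times 2$ minors lying inside the band. From them I read off a candidate row-linearization coefficient $\beta_m$, defined by $\beta_m b_m=c_{m-1}+a_{m+1}$ and $\beta_m c_m=d_{m-1}+b_{m+1}$; the two prescriptions agree because $b_m(d_{m-1}+b_{m+1})=c_m(c_{m-1}+a_{m+1})$ is a direct consequence of the two displayed relations, and they are never both vacuous since $c_m=0$ forces $b_mb_{m+1}=1$.

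First I would construct $t$ over $K=\mathrm{Frac}(R)$: for each column $x$, let $m\mapsto t(x,m)$ be the unique solution of the three-term recurrence $t(x,m+1)=\beta_m\,t(x,m)-t(x,m-1)$ with initial values $t(x,x-1)=b_{x-1}$ and $t(x,x)=a_x$. A one-line computation with the definition of $\beta$ shows this recurrence regenerates $t(x,x-2)=c_{x-2}$ and $t(x,x-3)=d_{x-3}$, so $t$ extends the band. Because every column solves the \emph{same} recurrence, all columns lie in its two-dimensional solution space, so $t$ has rank $2$ and is tame. Finally $t$ is an $SL_2$-tiling: writing $v_m=(t(x,m),t(x+1,m))$ the recurrence gives $\det(v_m,v_{m+1})=\det(v_{m-1},v_m)$, so the adjacent minor on columns $x,x+1$ is independent of the row and therefore equals its (unit) value inside the band, since every adjacent pair of columns meets the band in such a minor.

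The crux, and what I expect to be the main obstacle, is showing that $t$ actually takes its values in $R$: a naive extension through an adjacent minor forces a division, e.g. $t(n+4,n)=(d_nd_{n+1}-1)/c_{n+1}$, and since $R$ is merely a subring of a field, with no factoriality available, there is no gcd argument to guarantee integrality. The device is to use tameness, already proved: since $t$ has rank $2$, \emph{every} $3\times 3$ minor vanishes. I extend the band one diagonal at a time and note that each new entry, say $e_n:=t(n+4,n)$, is the unique unknown in the vanishing minor on columns $n+2,n+3,n+4$ and rows $n,n+1,n+2$. Expanding that determinant along its last column, the cofactors of $e_n$ and of $c_{n+2}$ are exactly the adjacent minors $b_{n+1}b_{n+2}-c_{n+1}a_{n+2}$ and $c_nc_{n+1}-d_nb_{n+1}$, both equal to $1$; the expansion therefore collapses to the \emph{division-free} formula
\[
e_n=d_{n+1}\,(c_nb_{n+2}-d_na_{n+2})-c_{n+2}\in R.
\]
Since the quadruple $(b,c,d,e)$ again satisfies relations of the two displayed types, this propagates by induction to the right and, symmetrically, to the left, giving $t(\mathbb Z^2)\subseteq R$.

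For uniqueness among tame extensions, let $t'$ be another. The row-analogue of Proposition~\ref{threecolumns} furnishes coefficients $\beta'_m$ with $t'(x,m-1)+t'(x,m+1)=\beta'_m t'(x,m)$ for all $x$; evaluating at the columns $x=m+1$ and $x=m+2$, where three consecutive band entries occupy rows $m-1,m,m+1$, forces $\beta'_m b_m=c_{m-1}+a_{m+1}$ and $\beta'_m c_m=d_{m-1}+b_{m+1}$, hence $\beta'_m=\beta_m$. Thus $t'$ obeys the same recurrence as $t$ and agrees with it on the two band rows $x-1,x$ of every column, so $t'=t$. (Tameness and uniqueness could alternatively be quoted from \cite{BeR}, as is done for Theorem~\ref{tiling-variables}.)
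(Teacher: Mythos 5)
Your proof is correct, and at its core it is the transpose of the paper's argument: where you linearize along rows (a three-term recurrence down each column with coefficients $\beta_m$), the paper linearizes along columns, setting $\alpha_n=\det\left(\begin{smallmatrix}b_n&d_n\\ a_{n+1}&c_{n+1}\end{smallmatrix}\right)$ and observing via the proof of Proposition \ref{threecolumns} that any tame extension must have exactly these column linearization coefficients; existence, tameness and uniqueness then run just as in your argument. The genuine difference is the integrality step, and here the paper's observation would have dissolved what you call the crux: the linearization coefficient is itself a semi-adjacent $2\times 2$ minor of band entries, hence lies in $R$ from the outset, so the extension $C_{j+1}=\alpha_j C_j-C_{j-1}$ is division-free and never leaves $R$ --- no detour through $K$ and no descent is needed. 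The same is available in your row-based setup: using the two band relations one checks that $\beta_m=c_{m-1}b_{m+1}-d_{m-1}a_{m+1}$, manifestly in $R$, satisfies both $\beta_m b_m=c_{m-1}+a_{m+1}$ and $\beta_m c_m=d_{m-1}+b_{m+1}$; this determinant formula also eliminates the case analysis about $b_m$ or $c_m$ vanishing that your quotient definition forces on you. That said, your descent via vanishing $3\times 3$ minors is correct and has a modest virtue of its own: the cofactor collapse to the division-free formula $e_n=d_{n+1}(c_nb_{n+2}-d_na_{n+2})-c_{n+2}$ makes the diagonal-by-diagonal propagation of $R$-integrality completely explicit, whereas the paper leaves integrality implicit in the remark that the recursive construction does not interfere with the band. (One small ordering point: deducing rank $2$ from the common recurrence only gives rank $\leq 2$; rank $\geq 2$ holds for any $SL_2$-tiling and follows from the $SL_2$ condition you verify in the next sentence, so nothing is actually lost.)
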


\begin{proof}
Define $\alpha_n=det(\left(\begin{array}{cc}b_n&d_n\\a_{n+1}&c_{n+1}\end{array}\right))$. If $t$ exists, then it follows from the proof of Proposition \ref{threecolumns} that $\alpha_n$ must be the linearization coefficient of the column containing $c_n$. Hence the column linearization coefficients of $t$ are all uniquely known. This implies unicity of $t$, since the given partial tiling extends uniquely to the whole plane, using the equation of Proposition \ref{threecolumns}.

This proves also the existence of $t$. Indeed, this follows from the fact that if in a 3 by 3 matrix $\left(\begin{array}{ccc}a&b&x\\c&d&y\end{array}\right)$, with respective columns $C_1,C_2,C_3$, one has $det(\left(\begin{array}{cc}a&b\\c&d\end{array}\right))=1$ and $C_3=\alpha C_2-C_1$ then $det(\left(\begin{array}{cc}b&x\\d&y\end{array}\right)=1$. Hence, the tiling constructed by applying recursively the relation of Proposition \ref{threecolumns} will satisfy the $SL_2$ condition. Moreover, by a previous remark, this construction will not interfere with the already known coefficients of the partial tiling.
\end{proof}

\begin{corollary}\label{particular4sequences}
Let $a_n$, $n\in \mathbb Z$, be a two-sided sequence over $K$. There exist a unique tame $SL_2$-tiling extending the partial tiling
$$
\begin{array}{cccccccccc}
\ddots&\ddots&\ddots&\ddots\\
&a_{n-1}&1&0&-1\\
&&a_{n}&1&0&-1\\
&&&a_{n+1}&1&0&-1\\
&&&&\ddots&\ddots&\ddots&\ddots
\end{array}
$$
This tiling is skew-symmetric with respect to the diagonal of $0$'s. The coefficients are given by the continuant polynomials, according to the rule $x=q(a_i,\ldots,a_j)$ and $y=-x$, where $x,y$ are located as indicated below.
$$  \begin{matrix}   
      0 & -1 & -a_{{i}} &\cdots&\cdots &\cdots& y\\
      1 & 0 & -1  &\ddots &&&\vdots\\
      a_{{i}} & 1 & 0 &\ddots&\ddots&&\vdots  \\[3pt]
     \vdots & \ddots&\ddots & \ddots &\ddots&\ddots&\vdots\\
     \vdots&&\ddots&\ddots&0&-1& -a_{{j}}\\
     \vdots&&&\ddots&1 & 0 & -1\\
     x & \cdots&\cdots &\cdots&a_{{j}} & 1 & 0           
     \end{matrix}
     $$
\end{corollary}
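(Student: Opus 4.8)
The plan is to deduce the three assertions of the corollary — existence and uniqueness with coefficients in $R$, the continuant formula, and skew-symmetry — from the Vierfolgenansatz (Proposition \ref{4sequences}) together with the continuant matrix identity (\ref{matricecontinuants}).

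First I would apply Proposition \ref{4sequences} to the four two-sided sequences $(a_n),(1),(0),(-1)$. The only thing to check is that the displayed partial tiling satisfies the $SL_2$-condition, and there are just two shapes of adjacent minor to inspect, namely $\det\left(\begin{smallmatrix} 1 & 0\\ a_{n+1} & 1\end{smallmatrix}\right)$ and $\det\left(\begin{smallmatrix} 0 & -1\\ 1 & 0\end{smallmatrix}\right)$, both equal to $1$. Proposition \ref{4sequences} then yields at once a unique tame $SL_2$-tiling $t$ extending the data, with all coefficients in the subring $R$ generated by the $a_n$. In the same breath I would read off the column linearization coefficients from the formula in that proof: here $\alpha_n=\det\left(\begin{smallmatrix} 1 & -1\\ a_{n+1} & 0\end{smallmatrix}\right)=a_{n+1}$, so that $(a_n)$ is exactly the sequence of linearization coefficients of the columns. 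This is the observation that brings the matrices $Q(a)$ and the signed continuants into play.

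Then I would write down an explicit candidate array and prove it equals $t$. Normalising coordinates so that the $0$'s lie on the main diagonal, define $T(u,w)=q(a_{u+1},\ldots,a_{w-1})$ when $w\ge u$ and $T(u,w)=-q(a_{w+1},\ldots,a_{u-1})$ when $w\le u$, using the conventions $q_{-1}=0$, $q_{0}=1$; this array is skew-symmetric ($T(w,u)=-T(u,w)$) by construction. Reading off $q_{-1}=0$, $q_{0}=1$, $q_{1}(a_{u+1})=a_{u+1}$ shows that $T$ carries $0$ on the diagonal, $\pm1$ on the two neighbouring diagonals and $\pm a$ on the next two, i.e. $T$ extends the prescribed four sequences $a_n,1,0,-1$. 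For the $SL_2$-condition I would split into cases according to how a $2\times2$ block meets the diagonal. For a block lying on one side, the adjacent minor is exactly
$$
q(a_{1},\ldots,a_{n-1})\,q(a_{2},\ldots,a_{n})-q(a_{1},\ldots,a_{n})\,q(a_{2},\ldots,a_{n-1})=1,
$$
which is nothing but $\det\bigl(Q(a_{1})\cdots Q(a_{n})\bigr)=1$ read off from identity (\ref{matricecontinuants}) (since $\det Q(a)=1$); a block straddling the diagonal reduces to the one-line computation $0\cdot0-(-1)\cdot1=1$, and a block above the diagonal follows because skew-symmetry makes the value of an adjacent minor invariant under reflection in the diagonal. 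Finally $T$ is tame, since its columns obey the three-term recursion $C_{j+1}=\alpha_{j}C_{j}-C_{j-1}$ with $\alpha_j=a_{j+1}$ and hence span a space of rank $2$. Being a tame $SL_2$-tiling extending the same four sequences, $T$ coincides with $t$ by the uniqueness in Proposition \ref{4sequences}; this yields simultaneously the continuant formula $x=q(a_i,\ldots,a_j)$ for the corner entry and the relation $y=-x$, that is, the skew-symmetry.

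The main obstacle, and the one genuinely requiring care, is the $SL_2$-verification of $T$: one must recognise the adjacent-minor condition as precisely the continuant determinant packaged in (\ref{matricecontinuants}) and dispose of the finitely many block shapes that cross the $0$-diagonal. A secondary but real difficulty is the index bookkeeping — confirming that the linearization coefficients are the $a_n$ with the correct shift, and that the continuant ranges $a_{u+1},\ldots,a_{w-1}$ align with the prescribed sequence so that the corner truly reads $q(a_i,\ldots,a_j)$. I note in passing that one could instead try to prove skew-symmetry by the softer argument that $P\mapsto -t(\sigma P)$, with $\sigma$ the reflection in the $0$-diagonal, is again a tame $SL_2$-tiling and then invoke uniqueness; but this reflection only matches the prescribed data on the three diagonals $1,0,-1$ and sends the fourth (the $a$-diagonal) outside the prescribed strip, so it does not close by itself — which is why I would route the whole argument through the explicit continuant array.
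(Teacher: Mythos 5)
Your proposal is correct and takes essentially the same route as the paper: existence and uniqueness via Proposition \ref{4sequences} (with the column linearization coefficients identified as the $a_n$), followed by a verification that the explicit continuant array is an $SL_2$-tiling using the determinant identity $\det\bigl(Q(a_1)\cdots Q(a_n)\bigr)=1$ packaged in Eq.~(\ref{matricecontinuants}). Your additional case analysis (blocks straddling the $0$-diagonal, blocks above the diagonal via skew-symmetry, and the three-term recursion giving tameness of the candidate array) merely spells out steps the paper's terser proof leaves implicit, and your index shift in the normalized formula $T(u,w)=q(a_{u+1},\ldots,a_{w-1})$ is a harmless relabeling that you correctly flag as bookkeeping.
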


\begin{proof}
Existence and unicity follow from the previous result. Note that $a_n$ is the column linearization coefficient of the column containing $a_{n+1}$. Now, define a tiling by the rules above. Then all we have to show is that the determinant of the matrix $\left(\begin{array}{cc}q(a_i,\ldots,a_{j-1})&q(a_{i+1},\ldots,a_{j-1})\\q(a_i,\ldots,a_j))&q(a_{i+1},\ldots,a_j)\end{array}\right)$ is equal to 1. But this follows from Lemma \ref{matricecontinuants}, since $det(Q(a))=1$.
\end{proof}

\section{The other extended Dynkin diagrams}

Theorem \label{tilde A} may be extended by the methods of $SL_2$-tilings to the Dynkin diagrams of type $D$. For this one introduces some special frontiers which have a double symmetry of palindromic type. These frontiers are ultimately periodic. Moreover the geometry of the frontier ensures the presence of diagonal rays labelled by squares, or double of squares, of some integers, and the latter appear in horizontal or vertical ray; this follows from Corollary\ref{square} with $h=0$ or $h=1$. Since on a frieze of type $\tilde D$ the sequences appearing on each fork of the diagram are equal (or double one of each another, depending on the orientation), the tiling constructed as above contain all the sequences of the associated frieze. Details may be found in \cite{ARS}. An example is the $SL_2$-tiling in Figure \ref{tilingN}, which mimicks the frieze of Figure \ref{friseD7}.

For the exceptional extended Dynkin diagram, these methods do not work. However, the corresponding results have been established by quite different methods (using the Caldero-Chapoton map \cite{CC} from the cluster category into the cluster algebra) by Bernhard Keller and Sarah Scherotzke \cite{KS}. They give also new proofs for the affine case $\tilde A$ and $\tilde D$. They prove rationality of the friezes (but not $\mathbb N$-rationality).

Actually, one may work more generally by considering the initial value $a(v,0)$ as an independant variable, instead of $a(v,0)=1$ as in Section \ref{frieze}. Then the sequences of the frieze are Laurent polynomials in these variables (Laurent phenomenon of Fomin and Zelevinsky) and for the Dynkin and extended Dynkin quivers, they have nonnegative integer coefficients (for general quivers, this is conjectured: a particular case of the {\em positivity conjecture} of Fomin and Zelevinsky). They satisfy linear recurrences also, as shown for type $\tilde A$ in \cite{ARS} and in general in \cite{KS}.

Finally, note that all this works also for Cartan matrices instead of diagrams (loc. cit.).

\section{Problems and conjectures}

Th.\ref{theorem1} is proved under the assumption that the sequences on the frieze are rational. Rational sequences are exponentially bounded. Examples of graphs which are not Dynkin nor extended Dynkin diagrams show that the sequences of the associated frieze grow very fast, more than exponentially. It seems likely that Th.\ref{theorem1} is still true under the weaker assumption that the sequences are exponentially bounded.

In \cite{KS}, Keller and Scherotzke show that the friezes of type ${\mathbb E}$ are rational; we conjecture that they are actually $\mathbb N$-rational. Actually, this should be deduced from the very special form of the linear recursions satisfied by the sequences, as is shown in \cite{KS}, Section 10, together with Soittola's theorem, see Th.\ref{merge}.

In the same article \cite{KS}, the authors give linear recursions for the sequences on the friezes of extended Dynkin type, in the case where the initial values of the frieze are independent variables (so that the sequences of the frieze are Laurent polynomials in theses variables); they ask if these recursions are the shortest ones. If we set each variable to 1, we get sequences over $\mathbb N$, and linear recursions over $\mathbb Z$; it is an interesting question to know what are the shortest recursions in the numerical case: the previous example shows that they may be shorter when the variables are set to 1.

It is tempting to make also the following conjecture, inspired by the results presented here. Let $x_1x_2x_3\cdots$ be a left-infinite word on the alphabet $\{x,y\}$. Let $i,j\in \{1,2\}$. Suppose that the sequence $\mu(x_1\cdots x_n)_{ij}$ is rational (or even $\mathbb N$-rational). Then the left-infinite word above is ultimately periodic (the converse is easy to prove, cf. proof of Theorem \ref{rational}).

\section{Exercices}

\begin{exercice}
\label{ex1}
Add the missing values in Figure \ref{tilingN} so that it becomes a $SL_2$-rectangle.
\end{exercice}

\begin{exercice}
\label{exmatrix}
Show that the multiplicative monoid of nonnegative matrices in $SL_2(\mathbb Z)$ is generated by the two matrices $\mu(x)$ and $\mu(y)$. Show that it is a free monoid on theses two matrices.
\end{exercice}

\begin{exercice}
\label{}
Construct the frieze associated to the quiver with two vertices and a double edge from one to another; identify the Fibonacci numbers
\end{exercice}

\begin{exercice}
Show that the frieze of Figure \ref{friseE6} defines three sequences $a_n,b_n,c_n$ satisfying
$$
a_{n+1}=\frac{1+b_n}{a_n}, b_{n+1}=\frac{1+a_{n+1}c_n}{b_n}, c_{n+1}=\frac{1+b_{n+1}^3}{c_n},
$$
with the initial values $a_0=b_0=c_0=1$. Show that they are $\mathbb N$-rational. 

Hint: show that $a_{2n}=F_{4n-2},a_{2n+1}=2F_{4n}$, where $F_n$ are the Fibonacci numbers (with $F_0=F_1=1$), that $b_n$ satisfies the recursion $b_{n+2}=7b_{n-1}-b_{n-2}-1$, so that $b_n=2(F_{4n-3}+F_{4n-7}+\ldots+F_1)+1$, and that $c_n$ satisfies the linear recursion, associated to the polynomial $(x^4-322x^2+1)(x^{12}-322x^6+1)$, which I deduced from the results in \cite{KS}, Section 10 (see also \cite{AD} p. 2338 and table 6); note that $322=F_{12}+F_{10}$. The $\mathbb N$-rationality is then deduced by using Soittola's theorem.

\end{exercice}

\begin{exercice}
\label{}
Show that for $1\leq i<j\leq k$, $$q(a_1,\ldots,a_{j-1})q(a_{i+1},\ldots,a_k)=q(a_1,\ldots,a_{i-1})q(a_{j+1},\ldots, a_k)$$$$+q(a_1,\ldots,a_k)q(a_{i+1},\ldots,a_{j-1}).$$
See \cite{AR} Lemma 2.2.
\end{exercice}

\begin{exercice}
\label{pathex}
Prove Proposition \ref{paths}.
\end{exercice}

\begin{exercice}
\label{exquad}
Show that the quadratic forms which appear in Subsection \ref{quad} are those whose coefficients are natural numbers and with discriminant of the form $n^2 -4$, $n\in \mathbb N$. Hint: let $Q(x,y)=ax^2+exy+dy^2$ with discriminant $n^2-4$. Show that $e=n+2f$, $f\in \mathbb N$. Let $b=h+n$, $c=h$. Show that the determinant of $\left(\begin{array}{cc}a&b\\c&d\end{array}\right)$ is 1 and using Exercise \ref{exmatrix}, find a word $w$ such that $\mu(w)$ is equal to this matrix.
\end{exercice}

\begin{exercice}
\label{}
Show that a pythagorean triple $(a,b,c)$ is of the form of those appearing in Corollary \ref{pyth} if and only if $a,b,c$ are relatively prime and $c$ is even.
\end{exercice}

\begin{exercice}
\label{fringeext}
Let $F\subset {\mathbb Z}^2$ be a fringe (see \ref{pathmodel} for the definition). Let $t:F\rightarrow \mathbb Z$ (resp. $F\rightarrow \mathbb N$) be a partial $SL_2$-tiling. Show that it extends uniquely to a tame $SL_2$ tiling (resp. to an $SL_2$-tiling, necessarily tame) of the plane.
Hint: show that the linearization coefficients of $t$ are determined by the hypothesis.
\end{exercice}

\begin{exercice}
\label{}
Deduce from Exercise \ref{fringeext} that if a frieze $a(v,n)$ of type $\tilde{\mathbb A}$ has values in $\mathbb Z$ (resp. $\mathbb N$) for two successive $n$ and any $v$, then all its values are in $\mathbb Z$ (resp. $\mathbb N$). Show that if it has values in $\mathbb N$, then it is $\mathbb N$-rational.
\end{exercice}

\begin{exercice}
\label{E6}
Use Proposition \ref{4sequences} and the frieze of Figure \ref{friseE6} to show the existence of the integral $SL_2$-tiling of Figure \ref{tilingE6}.
\end{exercice}

\begin{figure}
$$
\begin{array}{ccccccccccccccccccccccc}
10&7&4&1&0&-1&-2\\
7&5&3&1&1&0&-1&-2\\
4&3&2&1&2&1&0&-1&-10\\
1&1&1^2&1&3&2&1&0&-1&-13\\
6&7&8&3^2&28&19&10&1&0&-1&-68\\
77&90&103&116&19^2&245&129&13&1&0&-1\\
5230&6113&6996&7879&24520&129^2&8762&883&68&1&0
\end{array}
$$
\caption{A tiling associated to the diagram $\tilde{\mathbb E}_6$}\label{tilingE6}
\end{figure}

\begin{exercice}
\label{D7}
As in the previous exercise, do the same for the frieze of Figure \ref{friseD7}, extended in both directions, and the tiling below.
$$
\begin{array}{llllllllllllllllllllllllll}
&&&&&&&&&& & & & & \\
&&&&&&&\bf 20^2&61&27&74&195&316&437 \\
&&&&&&\bf 109&\bf 59&\bf 3^2&4&11&29&47&65 \\
&&&&&&\bf 24&\bf 13&\bf 2&\bf 1^2&3&8&13&18& \\
&&&&&&\bf 11&\bf 6&\bf  1& \bf 1&\bf 2^2&11&18&25& \\
&&&&&&\bf 3^2&\bf 5&\bf 1&\bf 2&\bf 9&\bf 5^2&41&57& \\
&&&&&&7&\bf 2^2&\bf 1&\bf 3&\bf 14&\bf 39&\bf 8^2&89& \\
&&&&&&5&3&\bf 1^2&\bf 4&\bf 19&\bf 53&\bf 87&\bf 11^2& \\
&&&&&&28&17&6&\bf 5^2&\bf 119&\bf 332&\bf 545&\bf 758& \\
&&&&&&135&82&29&121&\bf 24^2&\bf 1607&\bf 2368&\bf 3669& \\
&&&&&&&&&&&&&& \\
\end{array}
$$
Hint: the boldfaced numbers appear on the frieze. It may be useful to extend the Proposition \ref{4sequences} to more than 4 sequences, here 6.
\end{exercice}

\begin{exercice}
\label{}
Show that the following array extends uniquely to a tame $SL_2$-tiling of the plane, with positive integers below the diagonal of 1's.
$$
\begin{array}{lllllllllllllllllll}
1&0\\
&1&0\\
&&1&0\\
&&&1&0\\
&&&&1&0\\
&&&&&1&0\\
&&&1&1&1&1&0\\
&&1&\bf 1&&&&1&\bf 0\\
&&1&&&&&&1&0\\
1&1&1&&&&&&&1&0\\
1&&&&&&&&&&1&0\\  
\bf 1&&&&&&&&x&&&1&0
\end{array}
$$
Hint: determine the column linearization coefficients similarly to Corollary \ref{lincoeff}; then determine the values immediately under the diagonal of 1's by using a remrk in the proof of Corollary \ref{particular4sequences}. Find a formula for the coefficients, similar to Corollary \ref{t(P)=mu(w)}; for example, for the point with value $x$ in the array, define its word as $w=yyxxyyx$, by projecting as indicated by the boldfaced numbers, and then $x=\mu(w)_{22}$.

State and prove a result generalizing this example.
\end{exercice}

 \end{document}